\documentclass{amsart}
\usepackage{enumerate}
\usepackage{amssymb}
\usepackage{amsmath}
\usepackage[OT2,OT1]{fontenc}
\newcommand\cyr{%
\renewcommand\rmdefault{wncyr}%
\renewcommand\sfdefault{wncyss}%
\renewcommand\encodingdefault{OT2}%
\normalfont \selectfont} \DeclareTextFontCommand{\textcyr}{\cyr}

\newcommand{\snug}{\unskip\kern-\mathsurround}
\baselineskip 40pt
\newcommand{\ad}{{\rm ad}}
\newcommand{\tg}{{\rm tg}}
\newcommand{\gr}{{\rm gr}}
\newcommand{\cd}{{\rm cd}}
\newcommand{\Lie}{{\rm L}}

\newcommand{\Z}{{\mathbb Z}}
\newcommand{\F}{{\mathbb F}}
\newcommand{\N}{{\mathbb N}}
\newcommand{\Q}{{\mathbb Q}}
\textwidth=125 mm \textheight=195 mm
\emergencystretch=10pt
\newtheorem{theorem}{Theorem}[section]

\newtheorem{proposition}[theorem]{Proposition}
\newtheorem{corollary}[theorem]{Corollary}

\newtheorem{definition}[theorem]{Definition}

\begin{document}
\title{Mild Pro-$2$-Groups and $2$-Extensions of $\Q$ with Restricted Ramification}
\date{October 13, 2009}
\author{John Labute}
\email{labute@math.mcgill.ca}
\address{Department of Mathematics and Statistics, McGill University, Burnside Hall, 805 Sherbrooke Street West, Montreal QC H3A 2K6, Canada}
\author[J\'{a}n Min\'{a}\v{c}]{J\'an Min\'a\v{c}}
\email{minac@uwo.ca}
\address{Department of Mathematics, Middlesex College, \
University of Western Ontario, London, Ontario \ N6A 5B7 \ CANADA}
\thanks{$^\dag$Research supported in part by NSERC grants 4344-06 and R0370A01.}
\subjclass{11R34, 20E15, 12G10, 20F05, 20F14, 20F40}
\begin{abstract}
Using the mixed Lie algebras of Lazard, we extend the results of
the first author on mild groups to the case $p=2$. In particular,
we show that for any finite set $S_0$ of odd rational primes we
can find a finite set $S$ of odd rational primes containing $S_0$
such that the Galois group of the maximal $2$-extension of $\Q$
unramified outside $S$ is mild. We thus produce a projective
system of such Galois groups which converge to the maximal
pro-$2$-quotient of the absolute Galois group of $\Q$ unramified
at $2$ and $\infty$. Our results also allow results of Alexander
Schmidt on pro-$p$-fundamental groups of marked arithmetic curves
to be extended to the case $p=2$ over a global field which is
either a function field of characteristic $\ne2$ or a totally
imaginary number field.
\end{abstract}
\maketitle

\hfill {\it \`{A} Serre\ \ \ \ \ \ }
\section{Introduction}
In this paper we extend the theory of mild pro-$p$-groups
developed in \cite{La} to the case $p=2$. In particular, we obtain
the following result which is the missing ingredient in extending
the results of Alexander Schmidt in \cite{Sch} to the case $p=2$
over a global field which is either a function field of
characteristic $\ne2$ or a totally imaginary number field. Let
$H^i(G)=H^i(G,\Z/p\Z)$.
\medskip

\begin{theorem}\label{main1}
Let $G$ be a finitely generated pro-$p$-group. If $H^2(G)\ne 0$
and $H^1(G)=U\oplus V$ with the cup-product trivial on $U\times U$
and mapping $U\otimes V$ surjectively onto $H^2(G)$ then $G$ is
mild.
\end{theorem}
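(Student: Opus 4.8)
The plan is to exhibit a minimal presentation of $G$ and to show that the initial forms of its relations form a strongly free sequence in the mixed Lie algebra attached to the ambient free pro-$2$-group; by the definition of mildness this is exactly what has to be checked. First I would fix a minimal presentation $1\to R\to F\to G\to 1$ with $F$ free pro-$2$ on $d=\dim_{\F_2}H^1(G)$ generators and $R$ normally generated by $r=\dim_{\F_2}H^2(G)\ge 1$ relations. Passing to the associated graded mixed Lie algebra $\gr F$ over $\F_2$, whose enveloping algebra is the free associative algebra $A=\F_2\langle X\rangle$ on the images of the generators, the degree-two part of the initial form of each relation is governed by the cup product together with the Bockstein (the squaring operation), by the dictionary already in place. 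I would then choose a basis of $H^1(G)$ adapted to $H^1(G)=U\oplus V$, giving generators $x_1,\dots,x_m$ dual to $U$ and $y_1,\dots,y_n$ dual to $V$.

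Next I would read off the two hypotheses. Triviality of the cup product on $U\times U$ includes $\chi_i\cup\chi_i=0$, so for $p=2$ the Bockstein also vanishes on $U$; hence no pure $x$-term, neither a bracket $[x_i,x_{i'}]$ nor a square $x_i^2$, occurs in any quadratic initial form. Surjectivity of the cup product $U\otimes V\to H^2(G)$ dualizes to the statement that $\omega\mapsto(\langle\omega,\chi_i\cup\psi_j\rangle)_{i,j}$ is injective on $H^2(G)$; equivalently, every nonzero relation has a nonzero mixed part $\sum_{i,j}b_{ij}[x_i,y_j]$. In particular all relations are quadratic, so $G$ is quadratically defined, and the space $W\subseteq(\gr F)_2$ of degree-two initial forms is $r$-dimensional with injective projection onto the span of the mixed brackets. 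Note that the hypotheses give no control over the $V$-terms $[y_j,y_{j'}]$ and $y_\ell^2$, which may well appear in $W$.

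The heart of the proof is to show that $W$ is spanned by a strongly free sequence; since this property depends only on the subspace $W$, I am free to pick a convenient basis. I would use the monomial order on $A$ that compares $x$-degree first (more $x$-letters being larger) and breaks ties lexicographically with all $x$'s smaller than all $y$'s. For this order the leading term of any element with nonzero mixed part is a monomial $y_jx_i$ of ``one $y$ followed by one $x$'' type, so the uncontrolled pure-$V$ monomials $y_jy_{j'}$ and $y_\ell^2$ never occur in leading position. Using that the mixed projection of $W$ has full rank $r$, I would select a basis $\bar\rho_1,\dots,\bar\rho_r$ whose leading monomials $M_1,\dots,M_r$ are distinct monomials of this type. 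The decisive observation is that such monomials admit no overlaps, since an overlap would force a $y$-letter to equal an $x$-letter; hence there are no S-polynomials to reduce, the $\bar\rho_k$ already form a Gröbner basis, and $A/(\bar\rho_1,\dots,\bar\rho_r)$ has the same Hilbert series as the monomial algebra $A/(M_1,\dots,M_r)$, namely the expected series $1/(1-dt+rt^2)$. By the strong-freeness criterion this proves $\bar\rho_1,\dots,\bar\rho_r$ strongly free, and therefore $G$ is mild.

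I expect the main obstacle to lie in the $p=2$ bookkeeping of the first two steps rather than in the combinatorics of the last. One must make sure that the cup-product/Bockstein description of the quadratic parts is valid in the mixed Lie algebra, that triviality of $U\cup U$ genuinely eliminates the squares $x_i^2$ as well as the brackets $[x_i,x_{i'}]$, and that the leading-term form of the strong-freeness criterion is available at $p=2$. Once these are secured, the overlap-freeness of ``one $y$, one $x$'' monomials makes the final step essentially automatic, and in particular the uncontrolled $V$-terms cause no difficulty precisely because the chosen order keeps them out of leading position.
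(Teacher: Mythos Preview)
Your setup matches the paper's exactly: a minimal presentation, a basis of $H^1(G)$ adapted to $U\oplus V$, and the dualization that turns the two hypotheses into (i) the initial forms $\rho_k$ lie in the ideal $\mathfrak a$ generated by the $V$-dual generators (no pure $x$-terms), and (ii) the projection of $\mathrm{span}(\rho_1,\dots,\rho_r)$ onto the mixed brackets $[x_i,y_j]$ is injective. This is precisely the content of Section~7 of the paper.

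Where you diverge is in the mechanism that converts (i)+(ii) into strong freeness. The paper develops an Elimination Theorem for quadratic Lie algebras (Theorem~4.5) and derives from it a criterion (Theorem~\ref{SF}): if the $\rho_k\in\mathfrak a$ are $\F_2$-independent modulo $\mathfrak a^*$, they are strongly free. Observation (ii) is exactly independence of the images in $(\mathfrak a/\mathfrak a^*)_2$, so the paper is done. You instead stay in the free associative algebra $\bar A(X)$, pick a monomial order forcing each leading term to be of shape $y_jx_i$, note that such words have no overlaps (the last letter is an $x$, the first a $y$), conclude that the chosen basis is already a Gr\"obner basis, and read off the Hilbert series $1/(1-dt+rt^2)$; Theorem~\ref{P} then gives strong freeness. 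This is correct and is essentially Anick's combinatorial-freeness criterion.

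Both routes are legitimate. The paper's route is Lie-theoretic and self-contained within the machinery it builds; it also yields the slightly more flexible Theorem~\ref{SF} (and Corollary~\ref{sf}) used elsewhere in the paper. Your route imports standard noncommutative Gr\"obner technology and is shorter once that is granted, at the cost of not producing the general elimination statement. Your caveats in the last paragraph are well placed: the only $p=2$-specific points to nail down are that the square coefficient $a_{ik}$ really is $\bar r_k(\chi_i\cup\chi_i)$ (this is Proposition~3 of \cite{L}, as cited in the paper), and that Theorem~\ref{P} supplies the Hilbert-series form of the strong-freeness criterion you need.
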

\medskip

For $p\ne2$, Theorem~\ref{main1} is a reformulation by Schmidt of
a criterion for the mildness of a pro-$p$-group that was proven in
\cite{La}. We will show that mild pro-$p$-groups are also of
cohomological dimension $2$ when $p=2$. To prove our results we
have to further develop the theory of certain mixed Lie algebras
of Lazard~\cite{Laz}.

If $S$  is a finite set of odd rational primes we let $G_S(2)$ be
the Galois group of the maximal $2$-extension of $\Q$ unramified
outside $S$.
\medskip

\begin{theorem}\label{main2}
If $S_0$ is a finite set of odd rational primes there is a finite
set $S$ of odd rational primes containing $S_0$ such that $G_S(2)$
is mild.
\end{theorem}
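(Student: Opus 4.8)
The strategy is to realize the hypotheses of the mildness criterion Theorem~\ref{main1} for the group $G=G_S(2)$. Thus it suffices to produce a finite set $S\supseteq S_0$ of odd primes for which $H^2(G_S(2))\ne0$ and $H^1(G_S(2))$ splits as $U\oplus V$ with the cup product vanishing on $U\times U$ and mapping $U\otimes V$ onto $H^2(G_S(2))$.

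First I would make the cohomology explicit. Since the extensions counted by $G_S(2)$ are unramified at $2$ and $\infty$, a class in $H^1(G_S(2))=H^1(G_S(2),\F_2)$ corresponds to a squarefree positive integer $d$ supported on $S$ with $d\equiv1\pmod4$; and by Poitou--Tate duality together with the global Euler characteristic formula over $\Q$ one computes the generator rank $\dim_{\F_2}H^1(G_S(2))$ and the relation rank $\dim_{\F_2}H^2(G_S(2))$ as explicit functions of the primes in $S$ and their residues modulo $4$. This same computation lets me arrange $\dim_{\F_2}H^2(G_S(2))>0$, giving the nondegeneracy hypothesis of Theorem~\ref{main1}.

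The core of the argument is the cup product. To each odd $q\in S$ one attaches the quadratic class $\chi_q$ ramified at $q$, and the pairing $\chi_p\cup\chi_q\in H^2(G_S(2))$ is computed place by place by a quadratic linking symbol recording whether $p$ is a square modulo $q$, and conversely. Assembling these symbols into a linking matrix, the decomposition $H^1(G_S(2))=U\oplus V$ demanded by Theorem~\ref{main1} becomes a combinatorial condition on that matrix: the classes assigned to $U$ must be pairwise unlinked, while the induced map $U\otimes V\to H^2(G_S(2))$ must be onto. I would then invoke Chebotarev's density theorem to adjoin to $S_0$ auxiliary primes with prescribed Frobenius, hence prescribed residue symbols, engineering the linking matrix into precisely the block shape that yields such a $U,V$ splitting. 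This is the $p=2$ analogue of the circuit and linking-diagram constructions used for odd $p$ in \cite{La}.

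The main obstacle---and the reason $p=2$ genuinely differs from the odd case---is the extra cohomology contributed by the prime $2$ and the real place, together with the nonvanishing Bockstein. Unlike the odd case, the diagonal terms $\chi_q\cup\chi_q$ need not vanish, being governed by a quadratic $(-1)$-type symbol, so the pairing on $H^1(G_S(2))$ is not merely alternating. To neutralize these self-linking corrections I expect to draw the auxiliary primes from residue classes modulo a high power of $2$, say $q\equiv1\pmod8$, which both forces the extensions to be unramified at $2$ and $\infty$ and kills the quadratic terms on the subspace designated $U$, after which the linear linking data can be prescribed freely by Chebotarev. Controlling these mixed linear and quadratic contributions rigorously is exactly the role of the mixed Lie algebras of Lazard developed in the earlier sections; once the quadratic corrections are tamed, checking simultaneously that the cup product is trivial on $U\times U$ and surjective on $U\otimes V$ completes the verification of the hypotheses of Theorem~\ref{main1} for $G_S(2)$, and hence shows that $G_S(2)$ is mild.
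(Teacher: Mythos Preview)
Your strategy of verifying the cohomological hypotheses of Theorem~\ref{main1} is reasonable, but it is not the route the paper takes, and your sketch omits a structural point that the paper's argument handles explicitly. The paper does \emph{not} invoke Theorem~\ref{main1} here. Instead it writes down Koch's presentation of $G_S(2)$ (\cite{Ko}, Theorem~11.10), which for a set $S$ of odd primes carries, in addition to the local relators $r_i$, a global relator $r\equiv\prod_i x_i^{a_i}\bmod F_2$ of filtration degree~$1$. This relator is used to eliminate one generator, producing a minimal presentation on an even number $2m$ of generators with $2m$ relators whose initial forms are then checked against the concrete determinant criterion of Corollary~\ref{sf}. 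The auxiliary primes are chosen $\equiv1\pmod4$ (not $\pmod8$) with prescribed Legendre symbols so that, after the elimination, the odd-indexed generators carry no $\xi_i^2$ terms and no mutual brackets, while the ``circuit'' conditions $\ell'_{12}=\cdots=\ell'_{2m,1}=1$, $\ell'_{1,2m}=0$ force the $2m\times 2m$ minor to be nonzero. Mildness then follows from Theorem~\ref{sfred}.

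Your approach and the paper's are cousins, since the proof of Theorem~\ref{main1} in Section~7 ultimately reduces the cup-product hypothesis to the same strong-freeness check via Theorem~\ref{SF}. What the paper's direct route buys is that one never has to identify $H^2(G_S(2))$ or the cup product abstractly: everything is read off from the linking coefficients in the Koch presentation. What your route would buy is a cleaner conceptual statement, but to make it a proof you would still have to (i) account for the degree-$1$ global relator and the resulting drop in generator rank, and (ii) exhibit an explicit $S\supseteq S_0$ together with the $U\oplus V$ splitting and verify surjectivity of $U\otimes V\to H^2$; your proposal gestures at Chebotarev but does not carry out this construction. As written it is a plausible plan rather than a proof.
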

\medskip

Although the study of Galois groups of number fields with
restricted ramification can be traced already to work of L.
Kronecker and others in the 19-th century, the formal modern
foundations were laid out by I.R. \v Shafarevi\v c. His work was
influenced by geometrical considerations of finite coverings of
Riemann surfaces ramified in a given finite set of primes, class
field theory and a deep understanding of the Galois groups of
local fields. His papers \cite{Sha1}, \cite{Sha2} as well as his
paper with E.S. Golod \cite{GoSha} demonstrated the extraordinary
power of his vision. Koch's monograph \cite{Ko}, first published
in 1970, summarized the important contributions to the subject.
For example, information of the cohomological dimension of
$G_S(p)$ was obtained when $p$ was odd and in $S$. When $p$ was
not in $S$, nothing was known about $G_S(p)$, other that it could
be infinite by the work of Golod and Shafarevich, until the recent
work of the first author \cite{La} where it was shown that for $p$
odd this group was of cohomological dimension 2 for certain $S$.
The more difficult case $p=2$ was left open. This work finally
extends these results to the case $p=2$.

\section{Mixed Lie Algebras}

Let $G$ be a pro-$2$-group and let $G_n \ (n\ge1)$ be the $n$-th
term of the lower $2$-central series of $G$. We have
$$
G_1=G,\ \ G_{n+1}=G_n^2[G,G_n]
$$
where, for subgroups $H,K$ of $G$, $[H,K]$ is the closed subgroup
generated by the commutators $[h,k]=h^{-1}k^{-1}hk$ with $h\in H,
k\in K$ and $H^2$ is the subset of squares $h^2$ of elements of
$H$. Let $L(G)$ be the Lie algebra associated to the lower
$2$-central series of $G$. We have
$$
L(G)=\oplus_{n\ge1}L_n(G)
$$
where $L_n(G)=G_n/G_{n+1}$ is denoted additively. This defines
$\Lie(G)$ as a graded vector space over $\F_2$. If $l_n$ is the
canonical homomorphism $G_n\rightarrow L_n(G)$, the Lie bracket
$[\xi,\eta]$ of $\xi=l_m(x)$, $\eta=l_n(y)$ is $l_{m+n}([x,y])$.
To the homogeneous element $\xi=l_n(x)$ we associate the
homogeneous element $P\xi=l_{n+1}(x^2)$. If $\xi,\eta\in L_n(G)$
then
$$
P(\xi+\eta)=\begin{cases}
 P\xi+P\eta & \text{if $n>1$},\\
 P\xi+P\eta+[\xi,\eta] & \text{if $n=1$}.
\end{cases}
$$

If $\xi\in L_m(G)$, $\eta\in L_n(G)$ we have
$$
[P\xi,\eta]=\begin{cases}
 P[\xi,\eta] & \text{if $m>1$},\\
 P[\xi,\eta]+[\xi,[\xi,\eta]] & \text{if $m=1$}.
 \end{cases}
$$
Thus the operator $P$ defines a mixed Lie algebra structure on
$L(G)$ in the terminology of Lazard, cf. \cite{Laz}, Ch.2, \S1.2.
The operator $P$ extends to a linear operator on the Lie algebra
$$
L^+(G)=\oplus_{n>1}\Lie_n(G).
$$
It follows that $L ^+(G)$ is a module over the polynomial ring
$\F_2[\pi]$ where $\pi u=P(u)$.

If $A=\sum_{n\ge0}A_n$ is a graded associative algebra over the
graded algebra $\F_2[\pi]$, where multiplication by $\pi$ on
homogeneous elements increases the degree by $1$, then
$A_+=\sum_{n>0}A_n$ has the structure of a mixed Lie algebra where
$$
P\xi=\begin{cases}
 \pi\xi & \text{if $\xi$ is of degree $>1$},\\
 \pi\xi+\xi^2 & \text{if $\xi$ is of degree $1$}.
 \end{cases}
$$
Every mixed Lie algebra $\mathfrak g$ has an enveloping algebra
$U_{{\rm mix}}(\mathfrak g)$. This is graded associative algebra
$U$ over $\F_2[\pi]$ together with and a mixed Lie algebra
homomorphism $f$ of $\mathfrak g$ into $U_+$ such that, for every
graded associative algebra $B$ over $\F_2[\pi]$  and mixed Lie
algebra homomorphism $\varphi_0$ of $\mathfrak g$ into $B_+$,
there is a unique algebra homomorphism $\varphi$ of $U$ into $B$
satisfying $\varphi\circ f=\varphi_0$. The existence of $U_{{\rm
mix}}(\mathfrak g)$ is proven in \cite{Laz}, Th. 1.2.8. It is also
shown there that the canonical mapping of $\mathfrak{g}$ into
$U({\mathfrak{g}})$ is injective; this fact is referred to as the
Birkhoff-Witt Theorem for mixed Lie algebras. If
$X=\{x_1,\ldots,x_d\}$ is a weighted set, the enveloping algebra
of the free mixed Lie algebra $L_{\rm mix}(X)$ on the weighted set
$X$ is the free associative algebra $A(X)$ over $\F_2[\pi]$ on
$X$. Indeed, giving a mixed Lie algebra homomorphism $f: L_{\rm
mix}(X)\rightarrow B_+$ is the same as giving a graded map of $X$
into $B_+$ which is the same as giving a homomorphism of the
graded algebra $A(X)$ into $B$. It is now a straight-forward
argument to verify the following Proposition.
\medskip

\begin{proposition}
If $0\rightarrow\mathfrak r\rightarrow\mathfrak
g\rightarrow\mathfrak h\rightarrow 0$ is an exact sequence of
mixed Lie algebras, we have
$$
U_{\rm mix}(\mathfrak h)=U_{\rm mix}(\mathfrak g)/\mathfrak R
$$
where $\mathfrak R$ is the ideal of $U_{\rm mix}(\mathfrak g)$
generated by the image of $\mathfrak r$.
\end{proposition}
\medskip

Let $X=\{x_1,\ldots,x_d\}$ be a set and let $F=F(X)$ be the free
pro-$2$-group on $X$. The completed group algebra
$\Lambda=\Z_2[[F]]$ over the $2$-adic integers $\Z_2$ is
isomorphic to the Magnus algebra of formal power series in the
non-commuting indeterminates $X_1,\ldots,X_d$ over $\Z_2$.
Identifying $F$ with its image in $\Lambda$, we have $x_i=1+X_i$
(cf. \cite{Se}, Ch. I, \S 1.5).

The lower $2$-cental series of $F$ can be obtained by means of a
valuation on $\Lambda$. More generally, if $\tau_1,\ldots,\tau_d$
are integers $>0$, we define a valuation $w$ in the sense of
Lazard by setting
$$
w(\sum_{i_1,\ldots,i_k}a_{i_1,\ldots,i_k}X_{i_1}\cdots
X_{i_k})=\inf_{i_1,\ldots,i_k}(v(a_{i_1,\ldots,i_k})+\tau_{i_1}+\cdots+\tau_{i_k}),
$$
where $v$ is the $2$-adic valuation of $\Z_2$ with $v(2)=1$. Let
$\Lambda_n=\{u\in A\mid w(u)\ge n\}.$ Then $(\Lambda_n)_{n\ge0}$
is a filtration of $\Lambda$ by ideals and the associated graded
algebra $\gr(\Lambda)$ is a graded algebra over the graded ring
$\F_2[\pi]=\gr(\Z_2)$ with $\pi$ the image of $2$ in
$2\Z_2/4\Z_2$. If $\xi_i$ is the image of $X_i$ in
$\gr_{\tau_i}(\Lambda)$ then $\gr(\Lambda)$ is the free
associative $\F_2[\pi]$-algebra on $\xi_1,\ldots,\xi_d$ with a
grading in which $\xi_i$ is of degree $\tau_i$ and multiplication
by $\pi$ increases the degree by $1$. The Lie subalgebra $L$ of
$\gr(\Lambda)$ generated by the $\xi_i$ is the free mixed Lie
algebra over $\F_2[\pi]$ on $\xi_1,\ldots,\xi_d$ by the
Birkhoff-Witt Theorem. Note that when $\tau_i=1$ for all $i$ we
have $\Lambda_n=I^n$, where $I$ is the augmentation ideal
$(2,X_1,\ldots,X_d)$ of $\Lambda$.

For $n\ge1$, let $F_n=(1+\Lambda_n)\cap F$ and for $x\in F$ let
$\omega(x)=w(x-1)$ be the filtration degree of $x$. Then $(F_n)$
is a decreasing sequence of closed subgroups of $F$ with the
following properties:
$$
F_1=F,\ [F_n,F_k]\subseteq F_{n+k},\ F_n^2\subseteq F_{n+1}.
$$
It is called the $(x,\tau)$-filtration of $F$. Such a sequence of
subgroups of a pro-$2$-group $G$ is called a $2$-central series of
$G$. If $\tau_i=1$ for all $i$ then $(F_n)$ is the lower
$2$-central series of $F$.

If $(G_n)$ is a $2$-central series of $G$, let
$\gr_n(G)=G_n/G_{n+1}$ with the group operation denoted
additively. Then $\gr(G)=\oplus_{n\ge1}\gr_n(G)$ is a graded
vector space over $\F_2$ with a bracket operation $[\xi,\eta]$
which is defined for $\xi\in G_n$, $\eta\in G_k$ to be the image
in $\gr_{n+k}(F)$ of $[x,y]$ where $x,y$ are representatives of
$\xi,\eta$ in $\gr_n(G),\gr_k(G)$ respectively. Under this bracket
operation, $\gr(G)$ is a Lie algebra over $\F_2$. The mapping
$x\mapsto x^2$ induces an operator $P$ on $\gr(G)$ sending
$\gr_n(G)$ into $\gr_{n+1}(G)$. For homogeneous $\xi,\eta$ of
degree $m,n$ respectively, we have
\begin{align*}
P(\xi+\eta)&=P(\xi)+P(\eta)+[\xi,\eta]\ {\rm if }\ m=n=1,\\
P(\xi+\eta)&=P(\xi)+P(\eta)\ {\rm if }\ m=n>1,\\
[P(\xi),\eta]&=P([\xi,\eta])+[\xi,[\xi,\eta]]\ {\rm if }\ m=1,\\
[P(\xi),\eta]&=P([\xi,\eta])\ {\rm if }\ m>1.
\end{align*}
Hence $\gr(G)$ is a mixed Lie algebra.

 In the case $F=F(X)$ and $F_n=(1+\Lambda_n)\cap F$, the mapping $x\mapsto
x-1$ induces an injective Lie algebra homomorphism of $\gr(F)$
into $\gr(\Lambda)$. Identifying $\gr(F)$ with its image in
$\gr(\Lambda)$, we have $P(\xi)=\pi\xi$ unless $\xi\in\gr_1(F)$ in
which case
$$
P(\xi)=\xi^2+\pi\xi.
$$
\medskip
The Lie algebra $\gr(F)$ is the smallest $\F_2$-subalgebra of
$\gr(\Lambda)$ which contains $\xi_1,\ldots\xi_d$ and is stable
under $P$. To see this, let $X_n$ be the set of elements $x_i$
with $\tau_i=n$ and define subsets $T_n$ inductively as follows:
$T_1=X_1$ and, for $n>1$, $T_n=T_n'\cup T_n''$ where
$$
T_n'=\{x^2\mid x\in T_{n-1}\},\quad T_n''= X_n\cup\{[x,y]\mid x\in
T''_r,y\in T''_s,\ r+s=n\}.
$$
If $F_n'$ is the closed subgroup of $F$ generated by the $T_k$
with $k\ge n$, then $(F'_n)$ is a $2$-central series of $F$ (cf.
\cite{Laz}, \S1.2). If $\gr'(F)$ is the associated graded
Lie-algebra, the inclusions $F_n'\subseteq F_n$ induce a mixed Lie
algebra homomorphism $\gr'(F)\rightarrow \gr(F)$. We obtain a
sequence of mixed Lie algebra homomorphisms
$$
L_{\rm mix}(X)\rightarrow \gr'(F)\rightarrow \gr(F)\rightarrow
\gr(\Lambda),
$$
where the homomorphism $L_{\rm mix}(X)\rightarrow \gr'(F)$ sends
$\xi_i$ to $\xi_i'$, the image of $\xi_i$ in $\gr_{\tau_i}'(F)$,
and hence is surjective since the $\xi'_i$ generate $\gr'(F)$ as a
mixed Lie algebra over $\F_2[\pi]$. The composite of these
homomorphisms sends $\xi_i$ to $\xi_i$ and hence is injective.
Thus $\gr'(F)\rightarrow \gr(F)$ is injective from which it
follows inductively that $F_n'=F_n$ for all $n$. Hence we obtain
that $\gr(F(X))=L_{\rm mix}(X)$. The above $2$-filtration $(F_n)$
of $F$ is called the $(x,\tau)$-filtration of $F$. If $\tau_i=1$
for all $i$ then $(F_n)$ is the lower $2$-central series of $F$.
Thus we have shown the following result.
\medskip

\begin{theorem}
If $L(F(X))$ is the Lie algebra associated to the
$(x,\tau)$-filtration of the free pro-$2$-group $F(X)$ on the
weighted set $X=\{x_1,\ldots,x_d\}$, with $x_i$ of weight
$\tau_i$, then
 $L(F(X))=L_{\rm mix}(X)$, the free mixed Lie algebra on
 $X=\{\xi_1,\ldots,\xi_d\}$,
 where $\xi_i$ is the image of $x_i$ in $L_{\tau_i}(F(X))$.
\end{theorem}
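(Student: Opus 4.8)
The plan is to squeeze $\gr(F)$ between the free mixed Lie algebra $L_{\rm mix}(X)$ and its Birkhoff--Witt envelope $\gr(\Lambda)$, by exploiting the chain of mixed Lie algebra homomorphisms
$$
L_{\rm mix}(X)\xrightarrow{\ \alpha\ }\gr'(F)\xrightarrow{\ \beta\ }\gr(F)\xrightarrow{\ \iota\ }\gr(\Lambda)
$$
built from the auxiliary $2$-central series $(F_n')$. I would first record the structural facts already available: $\gr(\Lambda)$ is the free associative $\F_2[\pi]$-algebra on $\xi_1,\dots,\xi_d$, the canonical map of $L_{\rm mix}(X)$ into this algebra is injective (Birkhoff--Witt for mixed Lie algebras), and $\iota$ is injective because it is induced by $x\mapsto x-1$.

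First I would verify that $(F_n')$, defined through the inductively generated sets $T_n$ of iterated squares and brackets of the generators, is genuinely a $2$-central series: squaring raises the filtration index by $1$ and bracketing is additive in the indices, which matches $F_n^2\subseteq F_{n+1}$ and $[F_n,F_k]\subseteq F_{n+k}$ (cf.\ \cite{Laz}, \S1.2). Since each $x_i$ of weight $\tau_i$ lies in $F_{\tau_i}$ and these closure operations respect $(F_n)$, the inclusions $F_n'\subseteq F_n$ hold and induce the graded homomorphism $\beta$.

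Next I would analyze the two ends of the chain. The map $\alpha$ is surjective by the universal property of $L_{\rm mix}(X)$ together with the fact that the images $\xi_i'$ generate $\gr'(F)$, because the $T_n$ are by construction exactly the iterated squares and brackets of the $x_i$. The composite $\iota\beta\alpha$ sends each $\xi_i$ to $\xi_i\in\gr(\Lambda)$, so by the universal property it coincides with the canonical embedding $L_{\rm mix}(X)\hookrightarrow\gr(\Lambda)$ and is therefore injective. A squeeze then finishes the algebraic part: injectivity of the composite forces $\alpha$ to be injective, hence an isomorphism; since $\gr(F)$ is the smallest mixed Lie subalgebra of $\gr(\Lambda)$ containing the $\xi_i$, the image of $\beta$ already contains all the $\xi_i$ and so equals $\gr(F)$, making $\beta$ surjective, while $\beta$ is injective because $\iota$ is. Thus $\beta$ is an isomorphism and $\gr(F)=L_{\rm mix}(X)$ as mixed Lie algebras, with $\xi_i\mapsto\xi_i$.

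Finally, to record that the two filtrations themselves coincide, I would promote the graded isomorphism $\beta$ to the equality $F_n'=F_n$ by induction on $n$. Granting $F_n'\subseteq F_n$ together with the bijectivity of $\gr_n'(F)\to\gr_n(F)$, any $x\in F_n$ agrees modulo $F_{n+1}$ with an element of $F_n'$; iterating this and using that $F$ is profinite, that $(F_n)$ is separated, and that each $F_n'$ is closed, one expresses $x$ as a convergent product of elements of the $F_k'$ and concludes $x\in F_n'$. I expect this last passage --- converting a statement about the associated graded into an equality of the filtrations --- to be the only genuinely delicate point, since it is precisely here that the completeness and separatedness of the $(x,\tau)$-filtration, rather than formal manipulation of generators, become indispensable.
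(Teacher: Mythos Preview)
Your chain $L_{\rm mix}(X)\xrightarrow{\alpha}\gr'(F)\xrightarrow{\beta}\gr(F)\xrightarrow{\iota}\gr(\Lambda)$ and the use of Birkhoff--Witt for the injectivity of the composite are exactly the paper's set-up. The gap is in how you obtain surjectivity of $\beta$: you invoke that ``$\gr(F)$ is the smallest mixed Lie subalgebra of $\gr(\Lambda)$ containing the $\xi_i$'', but this statement is equivalent to $\gr(F)=L_{\rm mix}(X)$ --- the subalgebra of $\gr(\Lambda)$ generated by the $\xi_i$ is already identified with $L_{\rm mix}(X)$ by Birkhoff--Witt --- so it \emph{is} the theorem. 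In the paper that sentence is immediately followed by ``To see this\ldots'' and the $(F_n')$ construction; it is a conclusion of the argument, not an available input. For an arbitrary $2$-central series there is no reason the associated graded should be generated by the images of a generating set of the group, so this really needs proof.

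The paper avoids the circularity by extracting only the injectivity of $\beta$ (which follows from injectivity of $\iota\beta\alpha$ together with surjectivity of $\alpha$). Since $F_1'=F_1=F$, one then argues by a one-line induction: if $F_n'=F_n$, then $\beta_n$ is the canonical surjection $F_n/F_{n+1}'\to F_n/F_{n+1}$ with kernel $F_{n+1}/F_{n+1}'$, and injectivity of $\beta_n$ forces $F_{n+1}'=F_{n+1}$. This gives $F_n'=F_n$ for all $n$, hence $\gr(F)=\gr'(F)\cong L_{\rm mix}(X)$, and the ``smallest subalgebra'' description drops out at the end. Your convergent-product argument --- which needs surjectivity of every $\beta_k$ in advance plus separatedness and completeness of the filtration --- is therefore unnecessary; the passage you flagged as the only delicate point does not in fact arise once the logic is reorganized this way.
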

\medskip

\begin{theorem}\label{L+}
$L^+(X)$ is a free Lie algebra over $\F_2[\pi]$. If
$\xi_1,\ldots,\xi_m$ are the elements of $X$ of weight $1$ then,
as a free Lie algebra, $L^+(X)$ has a basis $Y$ consisting of
\begin{enumerate}[{\rm (1)}]
\item the $\binom{m+1}{2}$ elements
$$
P\xi_1,\ldots,P\xi_m,\  [\xi_i,\xi_j]\ (1\le i<j\le m),
$$
\item the elements
$$\xi_{m+1},\ldots,\xi_d,\ [\xi_i,\xi_j]\ (1\le i\le m,\ m+1\le
j\le d),
$$
\item for $3\le k$, the $(k-1)\binom{m}{k-1}$ commutators
$$
\ad(\xi_{i_1})\ad(\xi_{i_2})\cdots\ad(\xi_{i_{k-3}})\ad(\xi_j)^2(\xi_{i_{k-2}}),
$$
where $m\ge i_1>i_2>\cdots>i_{k-2}\ge1$, $1\le j\le m$, $j\ne
i_1,\ldots,i_{k-2}$,
\item for $3\le k$, the $(k-1)\binom{m}{k}$ commutators
$$
\ad(\xi_{i_1})\ad(\xi_{i_2})\cdots\ad(\xi_{i_{k-2}})\ad(\xi_{i_{k-1}})(\xi_{i_k}),
$$
where $m\ge i_1>i_2>\cdots>i_{k-1}\ge1$, $i_{k-1}<i_k\le m$,
$i_k\ne i_1,\ldots,i_{k-2}$,
\item for $3\le k$, the $\binom{m}{k-1}(d -m)$ commutators
$$
\ad(\xi_{i_1})\ad(\xi_{i_2})\cdots\ad(\xi_{i_{k-2}})\ad(\xi_{i_{k-1}})(\xi_{i_k}),
$$
where $m\ge i_1>i_2>\cdots>i_{k-1}\ge1$, $i_k>m$.
\end{enumerate}
If $A=A(X)$ is the free associative $\F_2[\pi]$-algebra on $X$ and
$B$ is the subalgebra of $A$ generated by\, $Y$ then $B$ is the
free associative algebra over $\F_2[\pi]$ on the weighted set $Y$.
Moreover, $A$ is a free $B$-module with basis
$\xi_1^{e_1}\cdots\xi_s^{e_s}$ ($e_i=0,1$).
\end{theorem}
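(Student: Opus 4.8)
The plan is to deduce the two assertions about the associative algebras $A$ and $B$ from the single assertion that $L^+(X)$ is a free Lie algebra over $\F_2[\pi]$ with $Y$ as a free basis, and then to prove the latter by an elimination argument. For the deduction, note first that on $L^+(X)$ the operator $P$ agrees with multiplication by $\pi$, since $L^+(X)$ contains no elements of weight $1$; thus $L^+(X)$ is genuinely an (ordinary) Lie algebra over $\F_2[\pi]$, and it is an ideal of the mixed Lie algebra $L_{\mathrm{mix}}(X)$ with $1$-dimensional quotient $\langle\xi_1,\dots,\xi_m\rangle$ in each weight. Since $A=A(X)$ is the mixed enveloping algebra $U_{\mathrm{mix}}(L_{\mathrm{mix}}(X))$, I would choose a homogeneous ordered basis of $L_{\mathrm{mix}}(X)$ placing $\xi_1,\dots,\xi_m$ first and apply the Birkhoff--Witt theorem for mixed Lie algebras; because $\xi_i^2=P\xi_i+\pi\xi_i$ lies in $B+\F_2[\pi]\xi_i$, each weight-$1$ generator occurs to exponent $0$ or $1$, and the resulting basis reads $A=\bigoplus_M M\cdot B$, where $M$ runs over the $2^m$ square-free monomials $\xi_1^{e_1}\cdots\xi_m^{e_m}$ (so $s=m$) and $B=U(L^+(X))$ is the subalgebra generated by $L^+(X)$, i.e.\ by $Y$. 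This is the claimed free-module statement, and once $L^+(X)$ is free on $Y$, the enveloping algebra of a free Lie algebra is free associative, giving that $B$ is free associative on $Y$.

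It then remains to prove the core statement, and here the engine is the mixed analogue of Lazard's elimination theorem applied to $L_{\mathrm{mix}}(X)=\gr(F(X))$. First, $L^+(X)$ is a bounded-below graded torsion-free submodule of the free module $\gr(\Lambda)$, hence free over the graded principal ideal domain $\F_2[\pi]$, which makes ``free Lie algebra over $\F_2[\pi]$'' meaningful. I would then eliminate the weight-$1$ generators one at a time. Eliminating a single weight-$1$ generator $\xi_j$ produces $P\xi_j$ as a new weight-$2$ free generator together with the translates $\ad(\xi_j)(\eta)$ of the remaining generators; the identity $\ad(P\xi_j)=\ad(\xi_j)^2+\pi\,\ad(\xi_j)$ shows that a second application of $\ad(\xi_j)$ is absorbed, up to $\pi\,\ad(\xi_j)$, into $\ad(P\xi_j)$, which is precisely why the squared operators $\ad(\xi_j)^2$ appear in family (3) and $P\xi_j$ in family (1). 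Carrying the elimination through the weight-$1$ generators in decreasing order of index accounts for the conditions $i_1>i_2>\cdots$ and the exclusions $i_k\ne i_1,\dots,i_{k-2}$ in families (3)--(5), while the ideal generated by the higher-weight generators $\xi_{m+1},\dots,\xi_d$ (on which $P=\pi$, so no squares intervene) is free on the left-bracketed elements of families (2) and (5).

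Finally I would confirm that the enumerated families have exactly the stated cardinalities by a Poincar\'e-series computation. Since $\pi$ is central of degree $1$, one has $A\cong\F_2[\pi]\otimes_{\F_2}\F_2\langle\xi_1,\dots,\xi_d\rangle$, so its Poincar\'e series, with $t$ tracking total degree and each $\xi_i$ weighted $t^{\tau_i}$, is $(1-t)^{-1}\bigl(1-mt-\sum_{j>m}t^{\tau_j}\bigr)^{-1}$. Dividing by $(1+t)^m$ (the contribution of the square-free module basis) gives the series of $B$; comparing with the series $(1-t)^{-1}\bigl(1-\sum_n c_n t^n\bigr)^{-1}$ of a free associative $\F_2[\pi]$-algebra on a set $Y$ with $c_n$ generators in degree $n$ pins down $\sum_n c_n t^n=1-(1+t)^m\bigl(1-mt-\sum_{j>m}t^{\tau_j}\bigr)$, and matching coefficients against the binomial counts in (1)--(5) is then a routine verification.

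I expect the main obstacle to be the bookkeeping of the $P$-operator inside the elimination: ensuring that each mixed elimination step contributes $P\xi_j$ and the squared brackets $\ad(\xi_j)^2(\cdots)$ with exactly the right multiplicities, that the weight-$1$ generators remain capped at exponent $1$ throughout, and that families (3) (squared) and (4) (unsquared) are cleanly separated with the precise index constraints. Securing freeness over $\F_2[\pi]$ rather than merely over $\F_2$ simultaneously with the exact enumeration is the delicate point, and the Poincar\'e-series identity serves as the consistency check that no generators have been miscounted.
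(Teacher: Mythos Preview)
Your plan differs from the paper's in both logical order and in the core method. The paper does \emph{not} argue by elimination. It first establishes the associative-algebra statements directly: take a homogeneous $\F_2$-basis $Z$ of the free $\F_2$-Lie subalgebra $\bar L\subset A$, observe that $Z^*=\{P\xi_1,\dots,P\xi_m\}\cup Z^+$ is an $\F_2[\pi]$-basis of the free module $L^+$, apply ordinary PBW over $\F_2$ to $\bar A=A/\pi A$, and rewrite the resulting basis (replacing $\eta_i^{2\gamma_i}$ by $(P\eta_i)^{\gamma_i}$ for $i\le m$) to obtain an $\F_2[\pi]$-basis of $A$ that already exhibits $A$ as free over $B$ with the $2^m$ square-free monomials as basis. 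From this the Poincar\'e series of $\bar B=B/\pi B$ is read off. Only then does the paper turn to the Lie statement, and there it proves nothing more than \emph{generation}: it writes down a standard spanning family for $\bar L_k$ modulo $[\bar L,\bar L]$ and, working modulo $\pi L^+ +[L^+,L^+]$, uses the identities $[P\xi_i,u]\equiv\ad(\xi_i)^2(u)$, $\ad(\xi_i)\ad(\xi_j)\equiv\ad(\xi_j)\ad(\xi_i)$, and $\ad(x)\ad(y)^2\ad(z)\equiv\ad(z)\ad(y)^2\ad(x)$ to rewrite every such element into one of the five listed families. Freeness then falls out because the surjection $L(Y)\to L^+$ has matching Poincar\'e series.

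Your elimination route is plausible in outline, but it requires a mixed-Lie-algebra elimination theorem that is not available off the shelf; the paper proves elimination only later, and only in the quadratic setting where $P$ vanishes above degree~$1$. The paper's approach buys precisely the avoidance of the bookkeeping you flag as the main obstacle: one never tracks freeness through an iterated elimination, only checks that a concrete spanning set reduces to the $Y$-types modulo $\pi L^+ +[L^+,L^+]$, after which the Poincar\'e identity upgrades generation to freeness in one stroke. Your Poincar\'e computation is essentially the same as the paper's, but there it is the decisive step rather than a post-hoc consistency check; and note that the paper deduces the module structure of $A$ over $B$ \emph{before} knowing that $L^+$ is free on $Y$, the reverse of your proposed order.
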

\begin{proof}
Let $A$ be the free associative algebra on
$X=\{\xi_1,\ldots,\xi_d\}$ over $\F_2[\pi]$ and let $\bar L$ be
the Lie subalgebra over $\F_2$ generated by $X$. Then $\bar L$ is
the free Lie algebra over $\F_2$ generated by $X$. If $L=L_{\rm
mix} (X)$ we have
\begin{align*}
L_1&=\bar L_1=\sum_{i=1}^m\F_2\xi_i,\\
L_n&=\pi^{n-2}\sum_{i=1}^m\F_2P\xi_i+\pi^{n-2}\bar
L_2+\cdots+\pi\bar L_{n-1}+\bar L_n \ (n\ge2).
\end{align*}
Let $Z$ be a homogeneous basis of $\bar{L}$ containing $X$ with
$\xi_1,\ldots,\xi_m$ the elements of $Z$ of degree $1$. If $Z^+$
is the set of elements of $Z$ of degree $>1$ then
$$
Z^*=\{P\xi_1,P\xi_2,\ldots,P\xi_m\}\cup Z^+
$$
is an $\F_2$-basis for $L^+$ modulo $\pi L^+$ and hence is an
$\F_2[\pi]$-basis for the free $\F_2[\pi]$-module $L^+$. If
$Z=\{\eta_i\mid i\ge1\}$ is linearly ordered so that
$\eta_i\le\eta_{i+1}$ and ${\rm degree}(\eta_i)\le {\rm
degree}(\eta_{i+1})$ then, by the Birkhoff-Witt theorem for Lie
algebras over $\F_2$, the elements
$$
\eta^\alpha=\prod_{i\ge1}\eta_i^{\alpha_i},
$$
where $\alpha=(\alpha_i)_{i\ge1}$ with $\alpha_i=0$ for almost all
$i$, form a $\F_2$-basis of $\bar{A}=A/\pi A$, the enveloping
algebra of $\bar{L}$. It follows that the elements
$$
\prod_{i=1}^m\eta_i^{\beta_i}\prod_{i=1}^m\eta_i^{2\gamma_i}\prod_{i>m}\eta_i^{\alpha_i},
$$
where $\beta_i=0,1$ and $\gamma_i,\alpha_i\in \N$, are also an
$\F_2$-basis of $\bar{A}$. Note that, in our convention, $0\in\N$.
Hence the elements
$$
\prod_{i=1}^m\eta_i^{\beta_i}\prod_{i=1}^m
P\eta_i^{\gamma_i}\prod_{i>m}\eta_i^{\alpha_i},
$$
where $\beta_i=0,1$ and $\gamma_i,\alpha_i\in \N$, are a
$\F_2[\pi]$-basis for $A$. In particular, the elements
$$
\prod_{i=1}^mP\eta_i^{\gamma_i}\prod_{i>m}\eta_i^{\alpha_i},
$$
where $\gamma_i,\alpha_i\in \N$, are an $\F_2[\pi]$-basis for the
$\F_2[\pi]$-subalgebra $B$ of $A$ generated by $Z^*$. This implies
that $A$ is a free $B$-module with basis
$$
\xi_1^{i_1}\cdots\xi_m^{i_m}\quad (i_k=0,1).
$$

Let $a_n$ be the number of elements of $Z$ of degree $n$. Then
$$
\prod_{n\ge1}(1-t^n)^{-a_n}=\frac{1}{1-\sum_im_it^{e_i}},
$$
where $e_1<e_2<\cdots<e_r$ are the possible values of the
$\tau_i=\deg(\xi_i)$ and $m_i$ is the number of $j$ with
$\tau_j=e_i$. We can rewrite this equation in the form
$(1+t)^mP(t)=(1-\sum_im_it^{e_i})^{-1}$ where

\begin{align*}
P(t)&=(1-t^2)^{-m}\prod_{n\ge2}(1-t^n)^{-a_n}\\
&=(1-t^2)^{-(a_2+m)}\prod_{n\ge3}(1-t^n)^{-a_n}\\
&=\frac{1}{1-(c_2t^2+c_3t^3+\cdots+c_{m+1}t^{m+1}+\sum_{k\ge1}q_k(t))},
\end{align*}
where
\begin{align*}
c_k&=(k-1)\binom{m+1}{k}
\\&=(k-1)\binom{m}{k-1}+(k-1)\binom{m}{k},\\
q_k(t)&=\sum_{j\ge2}\binom{m}{k-1}m_jt^{k-1+e_j}.
\end{align*}
The power series $P(t)$ is the Poincar\'e series of $\bar B=B/\pi
B$; the Poincar\'e series of $B$ is $P(t)/(1-t)$.

To show that the elements of $Y$ generate $L^+$ it suffices to
show that they generate $L^+$ as a vector space over $\F_2$ modulo
$\pi L^++[L^+,L^+]$. For $k>2$, we have $L^+_k=\bar L_k$ modulo
$\pi L^+$. For $k\ge2$, every element of $\bar L_k$ can be
uniquely written modulo $[\bar L,\bar L]$ as a linear combination
of the sequence $S$ of elements of the form
$$
\ad(\xi_{i_1})\ad(\xi_{i_2})\cdots\ad(\xi_{i_{k-2}})\ad(\xi_{i_{k-1}})(\xi_{i_k})
$$
with $d\ge i_1\ge i_2\ge\cdots\ge i_{k-1}\ge 1$ and $i_{k-1}<i_k$.
Modulo $\pi L^+$ we have
\begin{align*}
[P(\xi_i),P(\xi_j)]&=\ad(\xi_i)\ad(\xi_j)^2(\xi_i),\\
[P(\xi_i),u]&=\ad(\xi_i)^2(u)\ {\rm if}\ u\in L^+
\end{align*}
and $\ad(\xi_i)\ad(\xi_j)(u)=\ad(\xi_j)\ad(\xi_i)(u)$ modulo
$[L^+,L^+]$ if $u\in L^+$. If follows that the only terms of the
sequence $S$ which possibly do not lie in $\pi L^++[L^+,L^+]$ are
the terms of the subsequence $T$ of elements of the form
$$
{\rm (A)}\qquad\ \ \ \
\ad(\xi_{i_1})\ad(\xi_{i_2})\cdots\ad(\xi_{i_{k-2}})\ad(\xi_{i_{k-1}})(\xi_{i_k})
$$
with $m\ge i_1> i_2>\cdots> i_{k-1}\ge 1$ and $i_{k-1}<i_k$, or of
the form
$$
{\rm
(B)}\qquad\ad(\xi_{i_1})\ad(\xi_{i_2})\cdots\ad(\xi_{i_{k-3}})\ad(\xi_{i_{k-2}})^2(\xi_{i_{k-1}})
$$
with $m\ge i_1>i_2>\cdots>i_{k-2}\ge1$, $i_{k-2}<i_{k-1}\le m$, or
of the form
$$
{\rm (C)}\qquad\ \ \
\ad(\xi_{i_1})\ad(\xi_{i_2})\cdots\ad(\xi_{i_{k-2}})\ad(\xi_{i_{k-1}})(\xi_{i_k})
$$
with $m\ge i_1> i_2>\cdots\ge i_{k-1}\ge 1$ and $i_{k-1}<i_k=i_1$.
Working modulo $\pi L^++[L^+,L^+]$, this last element is equal to
$$
\ad(\xi_{i_2})\cdots\ad(\xi_{i_{k-2}})\ad(\xi_{i_1})\ad(\xi_{i_{k-1}})(\xi_{i_k})=
\ad(\xi_{i_2})\cdots\ad(\xi_{i_{k-2}})\ad(\xi_{i_1})^2(\xi_{i_{k-1}})
$$
which is an element in the family (3) in the statement of the
theorem. Using the identity
$$\ad(x)\ad(y)^2\ad(z)=\ad(z)\ad(y)^2\ad(x)\ \ ({\rm mod\ } \pi L^++\,[L^+,L^+]),
$$
the elements of the form (B) can be also written in the form (3).
The elements in (A) with $i_k\le m$ account for the elements in
(4) and the elements in (A) with $i_k>m$ account for the elements
in (5). The later account for the terms $q_k(t)$ in $P(t)$. Thus
$Y$ generates $L^+(X)$ and so the canonical mapping of $L(Y)$, the
free Lie algebra over $\F_2[\pi]$ on the weighted set $Y$, into
$L^+$ is surjective. It is injective since $L(Y)$ and $L^+$ have
the same Poincar\'e series.
\end{proof}
\medskip

\begin{corollary}\label{Lq}
Let ${\tilde L}_{\rm mix}(X)=L_{\rm mix}(X)/\pi L_{\rm mix}(X)^+$
and let\, $Y$ be as in Theorem~\ref{L+}. Then ${\tilde L}_{\rm
mix} (X)^+=\bar L(Y)$, the free Lie algebra over $\F_2$ on $Y$.
Its enveloping algebra $\bar B$ is the subalgebra of $\bar A=\bar
A(X)$ (the free associative $\F_2$-algebra on $X$) generated by
$\tilde L(X)^+$. The $\bar B$-module $\bar A$ is free with basis
consisting of the elements $\xi_1^{i_1}\cdots\xi_m^{i_m}$
$(i_k=0,1)$.
\end{corollary}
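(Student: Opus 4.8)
The plan is to obtain the Corollary as the reduction modulo $\pi$ of Theorem~\ref{L+}; nothing new needs to be computed, one only has to check that the various inclusions and module decompositions survive the passage to $A/\pi A$. Write $L^+=L_{\rm mix}(X)^+$, so that by definition $\tilde L_{\rm mix}(X)^+=L^+/\pi L^+$. Theorem~\ref{L+} tells us that $L^+$ is the free Lie algebra over $\F_2[\pi]$ on the weighted set $Y$, and in particular it is free as an $\F_2[\pi]$-module. Since the formation of a free Lie algebra commutes with base change, reducing modulo the ideal $(\pi)$ gives
$$
L^+/\pi L^+\;\cong\;\bar L(Y),
$$
the free Lie algebra over $\F_2=\F_2[\pi]/(\pi)$ on $Y$; note that every element of $Y$ has weight $\ge2$, so the mixed operator $P=\pi$ acts as $0$ on the positive part and no degenerate $P$-structure survives. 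This identifies $\tilde L_{\rm mix}(X)^+$ with $\bar L(Y)$ and settles the first assertion.

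Next, the (ordinary) enveloping algebra of the free Lie algebra $\bar L(Y)$ is the free associative $\F_2$-algebra on $Y$. By Theorem~\ref{L+} the algebra $B$ is the free associative $\F_2[\pi]$-algebra on $Y$, so $B/\pi B$ is the free associative $\F_2$-algebra on $Y$ as well; thus $\bar B=B/\pi B$ is a copy of $U(\bar L(Y))$. It remains to realize $\bar B$ as the subalgebra of $\bar A=A/\pi A$ generated by $\tilde L(X)^+$.

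The heart of the matter is that the canonical map $\bar B=B/\pi B\rightarrow A/\pi A=\bar A$ is injective, i.e. that $B\cap\pi A=\pi B$. This is exactly where the last assertion of Theorem~\ref{L+} is used: $A$ is a free $B$-module on the basis $\{\xi_1^{i_1}\cdots\xi_m^{i_m}\mid i_k=0,1\}$, one of whose members is $1$. Writing $A=\bigoplus_\alpha Bm_\alpha$ with $m_0=1$ and identifying $B$ with $Bm_0$, suppose $b\in B$ satisfies $b=\pi a$ for some $a=\sum_\alpha b_\alpha m_\alpha$. Comparing $B$-coordinates gives $b=\pi b_0$ and $\pi b_\alpha=0$ for $\alpha\ne0$; since $B$ is free, hence $\pi$-torsion-free, over $\F_2[\pi]$, we get $b_\alpha=0$ for $\alpha\ne0$ and $b\in\pi B$. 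Therefore $\bar B\hookrightarrow\bar A$, and as the images of $Y$ both generate $\bar B$ and form a basis of $\tilde L(X)^+$, the image of $\bar B$ is precisely the subalgebra of $\bar A$ generated by $\tilde L(X)^+$.

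Finally, reducing the decomposition $A=\bigoplus_\alpha Bm_\alpha$ modulo $\pi$ yields $\bar A=\bigoplus_\alpha \bar B\,\bar m_\alpha$, which exhibits $\bar A$ as a free $\bar B$-module on the basis $\xi_1^{i_1}\cdots\xi_m^{i_m}$ $(i_k=0,1)$, as claimed. I expect the injectivity of $\bar B\rightarrow\bar A$ to be the only genuine obstacle: the other steps are formal base-change statements, but a priori reduction modulo $\pi$ could collapse the subalgebra $B$ inside $A$, and it is exactly the freeness of $A$ over $B$ together with the presence of $1$ in a $B$-basis that prevents this.
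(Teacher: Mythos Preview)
Your proof is correct and follows the same route as the paper, which simply says the corollary ``follows immediately from the fact that $A$ is a free $B$-module with basis $\xi_1^{i_1}\cdots\xi_m^{i_m}$ $(i_k=0,1)$.'' Your version is a careful unpacking of that one line: you make explicit the base-change step $L^+/\pi L^+\cong\bar L(Y)$ and, most usefully, you spell out why $\bar B\to\bar A$ is injective (i.e.\ $B\cap\pi A=\pi B$), which the paper leaves to the reader.
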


This follows immediately from the fact that $A$ is a free
$B$-module with basis $\xi_1^{i_1}\cdots\xi_m^{i_m}$ $(i_k=0,1)$.

\section{Quadratic Lie Algebras}
If $\mathfrak g$ is a mixed Lie algebra we let $\tilde{\mathfrak
g}=\mathfrak g/\pi\mathfrak g^+$. Then $\tilde{\mathfrak g}$ is a
Lie algebra over $\F_2$ which we call the reduced algebra of
$\mathfrak g$. The operator $P$ on $\mathfrak g$ induces an
operator on $\tilde{\mathfrak g}$, also denoted by $P$, which is
zero in degree $>1$ and which, for homogeneous elements
$\xi,\eta$, satisfies

\begin{enumerate}[{\rm (QL1)}]
\item \ \ $P(\xi+\eta)=P(\xi)+P(\eta)+[\xi,\eta]$ if $\xi,\eta$ are of degree $1$,
\item \ \ $[P\xi,\eta]=[\xi,[\xi,\eta]]$ if $\xi$ is of degree
$1$.
\end{enumerate}
\medskip

Thus $\tilde{\mathfrak g}$ satisfies the axioms for a mixed Lie
algebra where $P(\xi)=0$ if $\xi$ is homogeneous of degree $>1$.
It is an example of what we call a quadratic Lie algebra.
\medskip

\begin{definition} [(Quadratic Lie Algebra)] A quadratic Lie algebra is a graded Lie algebra
$\mathfrak h=\oplus_{i\ge1}\mathfrak h_i$ over $\F_2$ together
with a mapping $P:\mathfrak h_1\rightarrow\mathfrak h_2$
satisfying {\rm(QL1)} and {\rm (QL2)}.
\end{definition}
\medskip

A homomorphism $f:\mathfrak h\rightarrow\mathfrak h'$ of quadratic
Lie algebras is a homomorphism of graded Lie algebras (over
$\F_2$) such that $f(P(s))=P(f(s))$ for every homogenous element
$s$ of degree $1$. By an ideal of $\mathfrak h$ we mean an ideal
$\mathfrak a$ of $\mathfrak h$ as a Lie algebra over $\F_2$ such
that $P(s)\in\mathfrak a$ for every element $s$ of $\mathfrak a$
of degree $1$. Every quadratic Lie algebra is a mixed Lie algebra
if we set $P\xi=0$ for every homogeneous element $\xi$ of degree
$1$. In this way Quadratic Lie algebras form a full subcategory of
the category of mixed Lie algebras.

If $A=\oplus_{i\ge0}A_i$ is a graded associative algebra over
$\F_2$ then the mapping $P: x\mapsto x^2$ of $A_1$ into $A_2$
together with the bracket $[x,y]=xy+yx$ defines the structure of a
quadratic Lie algebra on $A_+=\oplus_{i>0}A_i$. Indeed, we have
$(x+y)^2=x^2+y^2+xy+yx$ and
$$
[x,[x,y]]=[x,xy+yx]=x^2y+xyx+xyx+yx^2=[x^2,y].
$$
\medskip

\begin{definition}[(Derivation of a quadratic Lie algebra)]
If $\mathfrak h$ is a quadratic Lie algebra then by a derivation
of $\mathfrak h$ we mean an additive mapping $D:\mathfrak
h\rightarrow\mathfrak h$ that

\begin{enumerate}[{\rm (Der 1)}]
\item There is an integer $s\ge1$ such that $D(\mathfrak
h_n)\subseteq\mathfrak h_{n+s}$ $(s$ is the degree of $D)$,
\item\ \ $D(P(\xi))=[\xi,D(\xi)]$ if\, $\xi$ is homogeneous of degree
$1$,
\item\ \ $D[\xi,\eta]=[D(\xi),\eta]+[\xi,D(\eta)]$.
\end{enumerate}
\end{definition}

The set $\rm Der_{\rm quad}(\mathfrak h)$ of derivations of the
quadratic Lie algebra $\mathfrak h$ is a quadratic Lie algebra
under the operations of addition and Lie bracket
$[D_1,D_2]=D_1D_2+D_2D_1$ with $P(D)=D^2$ if $D$ is of degree $1$.
The grading is defined by the degree of a derivation.

If $\mathfrak a$ and $\mathfrak h$ are Lie algebras over $\F_2$
and $f$ is a homomorphism of $\mathfrak h$ into the Lie algebra of
derivations of $\mathfrak a$, the semi-direct product of
$\mathfrak a$ and $\mathfrak h$ is the direct product $\mathfrak
a\times\mathfrak h$ as vector spaces with the Lie algebra
structure given by
$$
[(\xi,\sigma),(\xi',\sigma')]=([\xi,\xi']+f(\sigma)(\xi')+f(\sigma')(\xi),[\sigma,\sigma']).
$$
We denote this Lie algebra by $\mathfrak a\times_f\mathfrak h$. We
will agree to identify $\mathfrak a$ and $\mathfrak h$ with their
canonical images in $\mathfrak a\times_f\mathfrak h$. If
$\mathfrak a$ and $\mathfrak h$ are graded then so is $\mathfrak
a\times_f\mathfrak h$ with $n$-th homogeneous component $\mathfrak
a_n\times\mathfrak h_n=\mathfrak a_n+\mathfrak h_n$.
\medskip

\begin{theorem} Let $\mathfrak a$ and $\mathfrak h$ be quadratic
Lie algebras and $f$ is a homomorphism of $\mathfrak h$ into $\rm
Der_{\rm quad}(\mathfrak a)$. If $(\xi,\sigma)$ is an element of
$\mathfrak a\times \mathfrak h$ of degree $1$ then
$$
P(\xi,\sigma)=(P(\xi)+f(\sigma)(\xi),P(\sigma))
$$
defines the structure of a quadratic Lie algebra on $\mathfrak
a\times_f\mathfrak h$.
\end{theorem}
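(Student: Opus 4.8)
The plan is to verify directly that the proposed $P$ makes $\mathfrak a\times_f\mathfrak h$ into a quadratic Lie algebra, since its underlying graded $\F_2$-Lie algebra structure has already been defined; thus it suffices to check the two axioms (QL1) and (QL2) for $P$. First I would observe that $P$ is well defined as a map from degree $1$ to degree $2$: because $f$ is a homomorphism of quadratic Lie algebras it is degree-preserving, so for $\sigma\in\mathfrak h_1$ the derivation $f(\sigma)$ has degree $1$, whence $f(\sigma)(\xi)\in\mathfrak a_2$ and every coordinate of $(P(\xi)+f(\sigma)(\xi),P(\sigma))$ lies in degree $2$.

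For (QL1) I would expand $P\big((\xi,\sigma)+(\xi',\sigma')\big)$ coordinatewise. The second coordinate is immediate from (QL1) in $\mathfrak h$. The first coordinate uses (QL1) in $\mathfrak a$ together with the bilinearity of $(\sigma,\xi)\mapsto f(\sigma)(\xi)$ --- that is, additivity of $f$ and additivity of each derivation --- and a short comparison shows it agrees with the first coordinate of $P(\xi,\sigma)+P(\xi',\sigma')+[(\xi,\sigma),(\xi',\sigma')]$. This step is routine.

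The substance is (QL2), where for $(\xi,\sigma)$ of degree $1$ one must match $[P(\xi,\sigma),(\xi',\sigma')]$ with $[(\xi,\sigma),[(\xi,\sigma),(\xi',\sigma')]]$. On the left I would expand the first coordinate using (QL2) in $\mathfrak a$ for the term $[P(\xi),\xi']=[\xi,[\xi,\xi']]$, the derivation axiom (Der 2) for $f(\sigma')(P(\xi))=[\xi,f(\sigma')(\xi)]$, and the identity $f(P(\sigma))=P(f(\sigma))=f(\sigma)^2$ coming from the fact that $f$ respects $P$. On the right I would substitute the inner bracket and expand using the Leibniz rule (Der 3) for $f(\sigma)$ and the Lie-homomorphism property $f([\sigma,\sigma'])=f(\sigma)f(\sigma')+f(\sigma')f(\sigma)$. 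The second coordinates of the two sides coincide at once by (QL2) in $\mathfrak h$.

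The delicate point --- and the only real obstacle --- is the cancellation in the first coordinate. After expansion the right-hand side produces the terms $[\xi,f(\sigma)(\xi')]$ and $f(\sigma)(f(\sigma')(\xi))$ each twice, and these die in characteristic $2$; what remains on both sides is exactly
\[
[\xi,[\xi,\xi']]+[f(\sigma)(\xi),\xi']+f(\sigma)^2(\xi')+[\xi,f(\sigma')(\xi)]+f(\sigma')(f(\sigma)(\xi)).
\]
The matching is then term by term. I expect this bookkeeping --- keeping track of which terms arise from the Leibniz rule, which from $f([\sigma,\sigma'])(\xi)$, and which pairs cancel mod $2$ --- to be the only place where care is required.
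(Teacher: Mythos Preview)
Your proposal is correct and follows essentially the same route as the paper's proof: a direct coordinatewise verification of (QL1) and (QL2), using (QL1)/(QL2) in $\mathfrak a$ and $\mathfrak h$, the derivation axioms (Der~2) and (Der~3), and the relations $f(P(\sigma))=f(\sigma)^2$ and $f([\sigma,\sigma'])=f(\sigma)f(\sigma')+f(\sigma')f(\sigma)$. The paper carries out exactly this expansion (with a few typographical slips in its displayed formulae), and your identification of the characteristic-$2$ cancellations of $[\xi,f(\sigma)(\xi')]$ and $f(\sigma)f(\sigma')(\xi)$ is precisely what makes the first coordinates agree.
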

\begin{proof}
Let $\xi+\sigma$, $\xi'+\sigma'$ be elements of $\mathfrak
a\times\mathfrak h$ of degree $1$. Then
\begin{align*}
&P(\xi+\sigma)+\xi'+\sigma')=P(\xi+\xi'+\sigma+\sigma')=\\
&P(\xi+\xi')+f(\sigma+\sigma')(\xi+\xi')+P(\sigma+\sigma')=\\
&P(\xi)+P(\xi')+[\xi,\xi']+f(\sigma)(\xi)+f(\sigma)(\xi')+f(\sigma')(\xi)+
f(\sigma')(\xi')+\\
&\ \ \ \ \ \ \ \ \ P(\sigma)+P(\sigma')+[\sigma,\sigma']=\\
&P(\xi+\sigma)+P(\xi'+\sigma')+[\xi+\sigma,\xi'+\sigma'].
\end{align*}
If $\xi+\sigma$ is of degree $1$ we have
\begin{align*}
&[P(\xi+\sigma),\xi'+\sigma']=[P(\xi)+f(\sigma)(\xi)+P(\sigma),\xi'+\sigma']=\\
&[P(\xi)+f(\sigma)(\xi),\xi']+f(P(\sigma))(\xi')+f(\sigma')((P(\xi)+f(\sigma)(\xi)+[P(\sigma),\sigma']=\\
&[P(\xi),\xi']+[f(\sigma)\xi,\xi']+f(\sigma)^2\xi'+[\xi,f(\sigma')(\xi)]+f(\sigma')f(\sigma)(\xi)+[P(\sigma),\sigma']=\\
&[\xi,[\xi,\xi']]+[f(\sigma)(\xi,\xi']+f(\sigma)^2(\xi')+
[\xi,f(\sigma')(\xi)+f(\sigma')f(\sigma)(\xi)+[\sigma,[\sigma,\sigma']]=\\
&[\xi,[\xi,\xi']]+[\xi,f(\sigma)(\xi')]+[\xi,f(\sigma')(\xi)]+f(\sigma)([\xi,\xi']+
f(\sigma)^2(\xi')+f(\sigma)f(\sigma')(\xi)\\
&\quad +f([\sigma,\sigma'])(\xi)+[\sigma,[\sigma,\sigma]]=\\
&[\xi+\sigma,[\xi,\xi']+f(\sigma)(\xi')+f(\sigma')(\xi)+[\sigma,\sigma']]=[\xi+\sigma,[\xi+\sigma,\xi'+\sigma']].
\end{align*}
\end{proof}

If $X$ is a homogeneous subset of the quadratic Lie algebra
$\mathfrak h$ then the quadratic subalgebra of $\mathfrak h$
generated by $X$ is the smallest Lie subalgebra $\mathfrak a$ of
$\mathfrak h$ which contains $X$ and which contains $P(x)$ for
every $x\in X$ of degree $1$. Let $\mathfrak h^*=P(\mathfrak
h_1)+[\mathfrak h,\mathfrak h]$. Then $\mathfrak h^*$ is a vector
subspace of $\mathfrak h$ by (QL1). The proof of the following
result is left to the reader.
\medskip

\begin{proposition}
The subset $X$ generates the quadratic Lie algebra $\mathfrak h$
if and only its image in the vector space $\mathfrak h/\mathfrak
h^*$ is a generating set.
\end{proposition}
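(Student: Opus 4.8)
The plan is to reduce both directions to the structure of the quotient by $\mathfrak h^{*}=P(\mathfrak h_1)+[\mathfrak h,\mathfrak h]$. First I would record that $\mathfrak h^{*}$ is homogeneous, with $\mathfrak h^{*}_1=0$, $\mathfrak h^{*}_2=P(\mathfrak h_1)+[\mathfrak h_1,\mathfrak h_1]$ and $\mathfrak h^{*}_n=\sum_{i+j=n}[\mathfrak h_i,\mathfrak h_j]$ for $n\ge3$, and that it is a quadratic ideal: the Lie-ideal property follows from $[\mathfrak h,[\mathfrak h,\mathfrak h]]\subseteq[\mathfrak h,\mathfrak h]$ together with $[P\xi,\eta]=[\xi,[\xi,\eta]]\in[\mathfrak h,\mathfrak h]$ from (QL2), while the condition on $P$ is vacuous because $\mathfrak h^{*}_1=0$. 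Hence $q\colon\mathfrak h\to\mathfrak h/\mathfrak h^{*}$ is a homomorphism of quadratic Lie algebras, and in the target every bracket and every $P$ of a degree-$1$ element vanishes; so $\mathfrak h/\mathfrak h^{*}$ is abelian with $P=0$, a mere graded $\F_2$-vector space in which ``generating set'' means ``spanning set''.

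The forward implication is then immediate: if $X$ generates $\mathfrak h$ then $q(X)$ generates $q(\mathfrak h)=\mathfrak h/\mathfrak h^{*}$, and in that quotient generating is the same as spanning.

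For the converse I would set $\mathfrak a$ equal to the quadratic subalgebra generated by $X$. Since $X$ and $P(X\cap\mathfrak h_1)$ are homogeneous, $\mathfrak a$ is graded and $\mathfrak a_1$ is the $\F_2$-span of $X\cap\mathfrak h_1$. I would then show $\mathfrak h_n\subseteq\mathfrak a$ by induction on $n$. The hypothesis that $q(X)$ spans gives $\mathfrak h_n=\langle X\cap\mathfrak h_n\rangle+\mathfrak h^{*}_n$ in each degree, and the first summand already lies in $\mathfrak a$; so everything comes down to $\mathfrak h^{*}_n\subseteq\mathfrak a$. This is trivial for $n=1$, and for $n\ge3$ the inductive hypothesis gives $\mathfrak h_i=\mathfrak a_i$ for $i<n$, whence $\mathfrak h^{*}_n=\sum_{i+j=n}[\mathfrak a_i,\mathfrak a_j]\subseteq\mathfrak a$.

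The only step needing the quadratic structure, and the one I expect to require care, is $n=2$, where $\mathfrak h^{*}_2=P(\mathfrak h_1)+[\mathfrak h_1,\mathfrak h_1]$ and one must see that $P(\mathfrak h_1)\subseteq\mathfrak a$. The definition of $\mathfrak a$ only provides $P(x)\in\mathfrak a$ for the generators $x\in X\cap\mathfrak h_1$, so I would invoke (QL1): writing an arbitrary $\xi\in\mathfrak h_1=\mathfrak a_1$ over $\F_2$ as $\xi=\sum_{i\in T}x_i$ with the $x_i\in X\cap\mathfrak h_1$, iterating (QL1) gives $P(\xi)=\sum_{i\in T}P(x_i)+\sum_{\{i,j\}\subseteq T}[x_i,x_j]$, every term of which lies in $\mathfrak a$. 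This also confirms that $\mathfrak a$ is genuinely closed under $P$ on all of $\mathfrak a_1$. With $P(\mathfrak h_1)\subseteq\mathfrak a$ and $[\mathfrak h_1,\mathfrak h_1]\subseteq\mathfrak a$, the induction closes and $\mathfrak a=\mathfrak h$, as required.
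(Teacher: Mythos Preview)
Your argument is correct. The paper itself offers no proof of this proposition, stating only that it is ``left to the reader''; your induction on the degree, with the key use of (QL1) at $n=2$ to push $P(\mathfrak h_1)$ into the subalgebra $\mathfrak a$, is exactly the expected verification and contains no gaps.
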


If $X$ is a weighted set then the natural map of $\tilde L_{\rm
mix} (X)=L_{\rm mix}(X)/\pi L_{\rm mix}(X)^+$ into $\bar
A(X)=A(X)/\pi A(X)$ is injective map of quadratic Lie algebras. We
use this to identify $\tilde L_{\rm mix}(X)$ with the quadratic
subalgebra of the free associative algebra $\bar A(X)$ over $\F_2$
generated by $X$. If $\bar L(X)$ is the Lie subalgebra of $\bar
A(X)$ generated by $X$ we have
$$
\tilde L_{\rm mix}(X)=\bar L(X)+\sum_{s\in S}\F_2s^2,
$$
where $S$ is the set of elements of $X$ of degree $1$ and
$P(s)=s^2$ for $s\in S$. The Lie algebra $\bar L(X)$ is the free
Lie algebra over $\F_2$ on $X$. Note that $\tilde L_{\rm mix}
(X)/\tilde L_{\rm mix}(X)^*=\bar L(X)/[\bar L(X),\bar L(X)]$.
\medskip

\begin{proposition}
The Lie algebra $\tilde L_{\rm mix}(X)$ is the free quadratic Lie
algebra on the set $X$.
\end{proposition}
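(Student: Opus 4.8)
The goal is to establish the universal property: for every quadratic Lie algebra $\mathfrak h$ and every degree-preserving map $\varphi_0\colon X\to\mathfrak h$ of weighted sets there is a unique homomorphism of quadratic Lie algebras $\bar\varphi\colon\tilde L_{\rm mix}(X)\to\mathfrak h$ with $\bar\varphi|_X=\varphi_0$. The plan is to deduce this from the universal property of the free mixed Lie algebra $L=L_{\rm mix}(X)$ by showing that passage to the reduced algebra $\mathfrak g\mapsto\tilde{\mathfrak g}=\mathfrak g/\pi\mathfrak g^+$ is left adjoint to the inclusion of quadratic Lie algebras into mixed Lie algebras. Throughout I use that quadratic Lie algebras form a \emph{full} subcategory of mixed Lie algebras, via $P\xi=0$ for homogeneous $\xi$ of degree $>1$; in particular a homomorphism of quadratic Lie algebras is the same thing as a homomorphism of the associated mixed Lie algebras.

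First I would view $\mathfrak h$ as a mixed Lie algebra and extend $\varphi_0$, using the universal property of the free mixed Lie algebra $L$, to a unique homomorphism of mixed Lie algebras $\varphi\colon L\to\mathfrak h$. The key step is to check that $\varphi$ annihilates $\pi L^+$. Since $L^+$ is the $\F_2[\pi]$-module on which $\pi$ acts as $P$, every element of $\pi L^+$ is a sum of terms $Pu$ with $u$ homogeneous of degree $>1$; as $\varphi$ commutes with $P$ and preserves degrees, $\varphi(Pu)=P\varphi(u)$ with $\varphi(u)$ homogeneous of degree $>1$ in $\mathfrak h$, and $P$ vanishes in degrees $>1$ on the quadratic Lie algebra $\mathfrak h$. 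Hence $\pi L^+\subseteq\ker\varphi$ and $\varphi$ descends to $\bar\varphi\colon\tilde L_{\rm mix}(X)=L/\pi L^+\to\mathfrak h$, which is a graded Lie homomorphism commuting with $P$ in degree $1$, i.e.\ a homomorphism of quadratic Lie algebras extending $\varphi_0$.

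For uniqueness, suppose $\psi\colon\tilde L_{\rm mix}(X)\to\mathfrak h$ is any homomorphism of quadratic Lie algebras extending $\varphi_0$. Being quadratic, $\psi$ is a homomorphism of mixed Lie algebras, so its composite with the canonical surjection $q\colon L\to\tilde L_{\rm mix}(X)$ is a mixed homomorphism extending $\varphi_0$; by the uniqueness in the universal property of $L$ we obtain $\psi\circ q=\varphi$, and since $q$ is surjective, $\psi=\bar\varphi$.

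I expect the only genuine obstacle to be the factorization through the reduced algebra, namely the vanishing $\varphi(\pi L^+)=0$; this is exactly the point where the defining relation $\pi=P$ on $L^+$ and the quadratic condition $P=0$ in degrees $>1$ interact. Everything else is a formal comparison of the two universal properties, the only care being the identification of quadratic homomorphisms with mixed homomorphisms guaranteed by fullness of the subcategory.
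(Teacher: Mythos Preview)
Your proof is correct and takes a genuinely different route from the paper's. The paper works concretely with the identification $\tilde L_{\rm mix}(X)=\bar L(X)+\sum_{s\in S}\F_2 s^2$ inside $\bar A(X)$: it first extends $\varphi_0$ to the ordinary free Lie algebra $\bar L(X)$ over $\F_2$, then forces $\varphi(P(s))=P(\varphi(s))$ on the extra basis elements $s^2$, and finally verifies by hand that the resulting map respects brackets of the form $[P(s),y]$ and $[P(s),P(t)]$. Your argument instead treats the reduction $\mathfrak g\mapsto\tilde{\mathfrak g}$ as the reflector onto the full subcategory of quadratic Lie algebras and deduces freeness of $\tilde L_{\rm mix}(X)$ formally from freeness of $L_{\rm mix}(X)$; the one nontrivial check, that $\varphi(\pi L^+)=0$ because $P$ vanishes in degrees $>1$ on $\mathfrak h$, is exactly the reflection property. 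Your approach is cleaner and explains the result as a general categorical fact, while the paper's hands-on argument has the advantage of not invoking the universal property of $L_{\rm mix}(X)$ itself but only that of $\bar L(X)$, and it keeps the explicit structure of $\tilde L_{\rm mix}(X)$ in view, which is used immediately afterwards.
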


\begin{proof}
Let $f$ be a weight preserving map of $X$ into a quadratic Lie
algebra $\mathfrak h$. Then $f$ extends uniquely to a Lie algebra
homomorphism $\varphi_0$ of $\bar L(X)$ into $\mathfrak h$. The
only way to extend $\varphi_0$ to a quadratic Lie algebra
homomorphism $\varphi$ of $\tilde L_{\rm mix}(X)$ into $\mathfrak
h$ is to define $\varphi(P(s))=P(\varphi(s))$ for any $s\in S$ and
to extend by linearity to all of $\tilde L_{\rm mix}(X)$. A
straightforward verification yields that
$\varphi([P(s),y])=[\varphi(P(s)),\varphi(y)]$ for any $y\in\bar
L(X)$ and that $\varphi([P(s),P(t)]=[\varphi(P(s)),\varphi(P(t))]$
for any $s,t\in S$ and hence that $\varphi$ is a homomorphism of
quadratic Lie algebras.
\end{proof}
\medskip

Every quadratic Lie algebra $\mathfrak h$ has a universal
enveloping algebra $U=U_{\rm quad}(\mathfrak h)$. More precisely,
there is a graded associative algebra $U$ over $\F_2$ and a
quadratic Lie algebra homomorphism $f$ of $\mathfrak h$ into $U_+$
such that for every quadratic Lie algebra homomorphism $\varphi_0$
of $\mathfrak h$ into an associative algebra $B$ over $\F_2$ there
is a unique algebra homomorphism $\varphi$ of $U$ into $B$
satisfying $\varphi\circ f=\varphi_0$. We have $U_{\rm
quad}({\tilde L}_{\rm mix}(X))={\bar A}(X)$ since ${\bar A}(X)$
has the correct universal property. More generally, we have
\medskip

\begin{proposition}
Let $\mathfrak g={\tilde L}_{\rm mix}(X)/\mathfrak r$ be a
presentation of a quadratic Lie algebra $\mathfrak g$ and let
$\mathfrak R$ be the ideal of ${\bar A}(X)=U_{quad}({\tilde
L}_{\rm mix}(X))$ generated by the image of $\mathfrak r$. Then
$$
{\bar A}(X)/\mathfrak R=U_{\rm quad}(\mathfrak g).
$$
\end{proposition}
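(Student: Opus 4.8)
The plan is to show that $\bar A(X)/\mathfrak R$, equipped with a natural structure map, satisfies the universal property that defines $U_{\rm quad}(\mathfrak g)$; this is the exact analogue for quadratic Lie algebras of the mixed Lie algebra statement proved earlier in the section, so the argument should be a formal diagram chase rather than a computation. Since $\mathfrak r$ is a homogeneous ideal, $\mathfrak R$ is a homogeneous two-sided ideal of $\bar A(X)$ and the quotient $\bar A(X)/\mathfrak R$ is again a graded associative $\F_2$-algebra; hence $(\bar A(X)/\mathfrak R)_+$ carries its canonical quadratic Lie algebra structure with $P(x)=x^2$ and $[x,y]=xy+yx$.

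First I would construct the structure map. Write $p\colon \tilde L_{\rm mix}(X)\to\mathfrak g$ for the quotient map and let $\iota\colon \tilde L_{\rm mix}(X)\to \bar A(X)_+$ be the canonical inclusion into $U_{\rm quad}(\tilde L_{\rm mix}(X))=\bar A(X)$, followed by the projection $q\colon \bar A(X)\to\bar A(X)/\mathfrak R$. The composite $q\circ\iota$ is a quadratic Lie algebra homomorphism, being a composite of such: the projection $q$ is an algebra homomorphism and hence induces a quadratic Lie algebra homomorphism on the $+$-parts, respecting both the commutator bracket and the squaring operation $P$. By construction $q\circ\iota$ kills $\mathfrak r$, since the image of $\mathfrak r$ lies in $\mathfrak R$, so it factors through $p$ to give a quadratic Lie algebra homomorphism $f\colon \mathfrak g\to (\bar A(X)/\mathfrak R)_+$.

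Next I would verify the universal property. Let $\varphi_0\colon\mathfrak g\to B_+$ be a quadratic Lie algebra homomorphism into a graded associative $\F_2$-algebra $B$. Then $\varphi_0\circ p\colon\tilde L_{\rm mix}(X)\to B_+$ is a quadratic Lie algebra homomorphism, so by the universal property of $\bar A(X)=U_{\rm quad}(\tilde L_{\rm mix}(X))$ it extends to a unique algebra homomorphism $\psi\colon\bar A(X)\to B$. Since $\varphi_0\circ p$ vanishes on $\mathfrak r$, the map $\psi$ vanishes on $\mathfrak r$, and being an algebra homomorphism it therefore vanishes on the entire two-sided ideal $\mathfrak R$ generated by $\mathfrak r$. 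Hence $\psi$ descends to an algebra homomorphism $\varphi\colon\bar A(X)/\mathfrak R\to B$ with $\varphi\circ f=\varphi_0$, which gives existence.

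Finally, uniqueness holds because $f(\mathfrak g)$ generates $\bar A(X)/\mathfrak R$ as an algebra: the set $X$ generates $\bar A(X)$ and the image of $X$ lies in $f(\mathfrak g)$, so the images of the $\xi_i$ generate the quotient. Any algebra homomorphism out of $\bar A(X)/\mathfrak R$ is thus determined by its restriction to $f(\mathfrak g)$, forcing $\varphi$ to be unique. I expect the only point requiring genuine care is the compatibility of $f$ with $P$ after passing from the quadratic-Lie ideal $\mathfrak r$ to the associative two-sided ideal $\mathfrak R$; but this is immediate once one observes that $P$ on the $+$-part of any associative algebra is simply squaring, which every algebra homomorphism in the diagram preserves, so no extra relation beyond those in $\mathfrak R$ is imposed.
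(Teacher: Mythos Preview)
Your argument is correct and is precisely the universal-property verification the paper has in mind: the paper states this proposition without proof, prefacing it with ``More generally, we have'' immediately after observing that $\bar A(X)$ has the correct universal property for $\tilde L_{\rm mix}(X)$, and treating it as the routine analogue of the mixed Lie algebra version stated earlier (also without proof). You have simply written out the standard diagram chase that the paper leaves to the reader.
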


\begin{proposition} Let $\mathfrak g$ be a mixed Lie algebra and
$\tilde{\mathfrak g}=\mathfrak g/\pi{\mathfrak g}^+$ the reduced
algebra of $\mathfrak g$. If $U=U_{\rm mix}(\mathfrak g)$ then
$U_{\rm quad}(\tilde{\mathfrak g})=U/\pi U$.
\end{proposition}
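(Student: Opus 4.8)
The plan is to exhibit mutually inverse graded algebra homomorphisms between $U/\pi U$ and $U_{\rm quad}(\tilde{\mathfrak g})$ by playing off the universal property of $U_{\rm mix}(\mathfrak g)$ against that of $U_{\rm quad}(\tilde{\mathfrak g})$. The one preliminary point that makes the two universal properties talk to each other is that $U/\pi U$, a priori a graded associative $\F_2$-algebra, may be regarded as a graded $\F_2[\pi]$-algebra on which $\pi$ acts as the zero map (which vacuously raises the degree by one). With this convention the mixed Lie structure on $(U/\pi U)_+$ is exactly its quadratic Lie structure: in degree $1$ one computes $P\bar\xi=\pi\bar\xi+\bar\xi^2=\bar\xi^2$, while in degree $>1$ one gets $P\bar\xi=\pi\bar\xi=0$. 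I would adopt the same convention on $U_{\rm quad}(\tilde{\mathfrak g})$, making it an $\F_2[\pi]$-algebra with $\pi=0$.

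For the forward map, write $f\colon\mathfrak g\to U_+$ for the canonical mixed Lie homomorphism and $\bar f\colon U\to U/\pi U$ for the reduction. Since $f(\pi\mathfrak g^+)\subseteq\pi U_+$, the composite $\bar f\circ f$ annihilates $\pi\mathfrak g^+$ and hence factors through a map $\tilde f\colon\tilde{\mathfrak g}\to(U/\pi U)_+$. A short check shows $\tilde f$ is a homomorphism of quadratic Lie algebras: it is $\F_2$-linear and preserves brackets because $f$ does, and in degree $1$ one has $\tilde f(P\bar\xi)=\bar f(\pi f\xi+(f\xi)^2)=(\tilde f\bar\xi)^2=P(\tilde f\bar\xi)$. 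The universal property of $U_{\rm quad}(\tilde{\mathfrak g})$ then delivers a unique $\F_2$-algebra map $\beta\colon U_{\rm quad}(\tilde{\mathfrak g})\to U/\pi U$ with $\beta\circ g=\tilde f$, where $g\colon\tilde{\mathfrak g}\to U_{\rm quad}(\tilde{\mathfrak g})_+$ is the canonical map.

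For the reverse map, let $q\colon\mathfrak g\to\tilde{\mathfrak g}$ be the quotient and form $h=g\circ q\colon\mathfrak g\to U_{\rm quad}(\tilde{\mathfrak g})_+$; this is a mixed Lie homomorphism, since both $q$ and $g$ are and, with $\pi$ acting as zero on the target, a quadratic Lie homomorphism is in particular a mixed one. The universal property of $U=U_{\rm mix}(\mathfrak g)$ now yields a unique $\F_2[\pi]$-algebra map $\alpha\colon U\to U_{\rm quad}(\tilde{\mathfrak g})$ with $\alpha\circ f=h$; as $\pi$ acts as zero on the target, $\alpha$ kills $\pi U$ and descends to $\bar\alpha\colon U/\pi U\to U_{\rm quad}(\tilde{\mathfrak g})$.

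It then remains to check that $\beta$ and $\bar\alpha$ are inverse, which I expect to follow formally from the uniqueness clauses once the diagrams are chased. Tracing the definitions gives $\bar\alpha\circ\tilde f=g$ (use $\bar\alpha\circ\bar f=\alpha$, $\alpha\circ f=g\circ q$, and that $q$ is onto), whence $(\bar\alpha\circ\beta)\circ g=g=\mathrm{id}\circ g$, so the uniqueness in the universal property of $U_{\rm quad}(\tilde{\mathfrak g})$ forces $\bar\alpha\circ\beta=\mathrm{id}$. Symmetrically, $\beta\circ\bar\alpha\circ\bar f$ and $\bar f$ are both $\F_2[\pi]$-algebra maps $U\to U/\pi U$ whose precomposition with $f$ equals $\bar f\circ f$, so the uniqueness in the universal property of $U$, together with the surjectivity of $\bar f$, gives $\beta\circ\bar\alpha=\mathrm{id}$. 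The genuine work — and the only place I anticipate friction — is the bookkeeping of the previous paragraphs: keeping track of which base ring ($\F_2[\pi]$ or $\F_2$) and which structure (mixed or quadratic) each arrow respects, and verifying the $P$-compatibilities survive reduction modulo $\pi$; granting these, the isomorphism is purely formal.
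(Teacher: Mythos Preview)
Your argument is correct. The bookkeeping you flag as the only friction point does go through: once $U/\pi U$ and $U_{\rm quad}(\tilde{\mathfrak g})$ are viewed as $\F_2[\pi]$-algebras with $\pi$ acting as zero, the mixed and quadratic structures on their positive parts coincide, $q$ is a mixed Lie homomorphism (since $P=\pi$ on $\mathfrak g^+$ lands in $\ker q$), and both uniqueness arguments are clean.

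Your route is genuinely different from the paper's. The paper chooses a presentation $\mathfrak g=L_{\rm mix}(X)/\mathfrak r$, invokes the earlier propositions that the enveloping algebra of a quotient is the quotient of the enveloping algebra by the ideal generated by the kernel (both in the mixed and quadratic settings), and then reads off $U_{\rm mix}(\mathfrak g)=A(X)/\mathfrak R$ and $U_{\rm quad}(\tilde{\mathfrak g})=\bar A(X)/\tilde{\mathfrak R}$, with $\bar A(X)=A(X)/\pi A(X)$ and $\tilde{\mathfrak R}$ the image of $\mathfrak R$. This is quick given the preceding machinery and keeps everything in terms of generators and relations, which is convenient for the later computations in the paper. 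Your argument, by contrast, is presentation-free and purely categorical: it shows directly that $U/\pi U$ satisfies the universal property of $U_{\rm quad}(\tilde{\mathfrak g})$, and would work verbatim for mixed Lie algebras that are not finitely generated. The trade-off is that the paper's version is a two-line consequence of results already established, while yours is self-contained but requires the careful structure-tracking you carried out.
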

\begin{proof}
If $\mathfrak g=L_{\rm mix}(X)/\mathfrak r$ then $\tilde{\mathfrak
g}={\tilde L}_{\rm mix}(X)/\tilde{\mathfrak r}$, where
$\tilde{\mathfrak r}$ is the image of $\mathfrak r$ in ${\tilde
L}_{\rm mix}(X)$. Then
$$U_{\rm quad}(\tilde{\mathfrak g})={\bar
A}(X)/\tilde{\mathfrak R},
$$
where $\tilde{\mathfrak R}$ is the image of $\mathfrak R$ in
${\bar A}(X)$.
\end{proof}

\section{Strongly Free Sequences}
Let $\rho_1,\ldots,\rho_m\in L=L_{\rm mix}(X)$ with $\rho_i$
homogeneous of degree $h_i>1$ and let $\mathfrak r$ be the ideal
of the free mixed Lie algebra $L$ generated by
$\rho_1,\ldots,\rho_m$. Let $\mathfrak g= L/\mathfrak r$. Then
$M=\mathfrak r/[\mathfrak r,\mathfrak r]$ is a module over the
enveloping algebra $U=U_{\rm mix}({\mathfrak g})$ via the adjoint
representation.

\begin{definition}
The sequence $\rho_1,\ldots,\rho_m$ is said to be strongly free in
$L$ if the following conditions hold.
\begin{enumerate}[\rm(i)]
\item The $\F_2[\pi]$-module $U$ is torsion free.
\item The $U$-module $M$ is free on the images of
$\rho_1,\ldots,\rho_m$.
\end{enumerate}
\end{definition}
\medskip

Let $\tilde\rho_i$ be the image of $\rho_i$ in $\tilde L=\tilde
L_{\rm mix}(X)$ and let $\tilde{\mathfrak r}$ be the ideal of
$\tilde L$ generated by $\tilde\rho_1,\ldots,\tilde\rho_m$. Let
$\tilde{\mathfrak g}= \tilde L/\tilde{\mathfrak r}$. Then $\tilde
M= \tilde{\mathfrak r}/[\tilde{\mathfrak r},\tilde{\mathfrak r}]$
is a module over the enveloping algebra $\tilde U=U_{\rm
quad}({\tilde{\mathfrak g}})$ via the adjoint representation.

\begin{definition}
The sequence $\tilde\rho_1,\ldots,\tilde\rho_m$ is said to be a
strongly free in\, $\tilde L$ if the $\tilde U$-module $\tilde M$
is free on the images of $\tilde\rho_1,\ldots,\tilde\rho_m$.
\end{definition}
\medskip

Let $X=\{\xi_1,\ldots,\xi_d\}$ with $\xi_i$ of weight $e_i$.
\medskip

\begin{theorem}\label{P}
The sequence $\tilde\rho_1,\ldots,\tilde\rho_m$ is strongly free
in $\tilde L$ if and only if the Poincar\'e series of\, $\tilde U$
is
$$
1/(1-(t^{e_1}+\cdots+t^{e_d})+t^{h_1}+\cdots+t^{h_m}).
$$
\end{theorem}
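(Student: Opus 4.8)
The plan is to reduce the statement to an Euler--characteristic computation for a short complex of graded $\tilde U$-modules whose terms are finite-dimensional in each degree. Since we work entirely in the quadratic world over the field $\F_2$ (the operator $\pi$ plays no role, and $\tilde U=U_{\rm quad}(\tilde{\mathfrak g})$), there is no torsion condition to verify: strong freeness of $\tilde\rho_1,\ldots,\tilde\rho_m$ means precisely that the $\tilde U$-module $\tilde M=\tilde{\mathfrak r}/[\tilde{\mathfrak r},\tilde{\mathfrak r}]$ is free on the images of the $\tilde\rho_j$. Write $\Phi_N(t)=\sum_n(\dim_{\F_2}N_n)t^n$ for the Poincar\'e series of a graded $\F_2$-space $N$, set $H_X(t)=t^{e_1}+\cdots+t^{e_d}$ and $H_\rho(t)=t^{h_1}+\cdots+t^{h_m}$, and let $V=\oplus_{i=1}^d\F_2\xi_i$, so that $\Phi_V(t)=H_X(t)$.

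First I would produce the fundamental four-term exact sequence of graded left $\tilde U$-modules
$$
0\longrightarrow\tilde M\stackrel{\theta}{\longrightarrow}\tilde U\otimes_{\F_2}V\stackrel{d}{\longrightarrow}\tilde U\stackrel{\varepsilon}{\longrightarrow}\F_2\longrightarrow 0,
$$
where $\varepsilon$ is the augmentation, $d(u\otimes\xi_i)=u\bar\xi_i$ (so $\mathrm{im}\,d=\tilde U_+$), and $\theta$ is the Fox-derivative map carrying the class of $\tilde\rho_j$ into $\tilde U\otimes V$. This is built from the presentation $\tilde{\mathfrak g}=\tilde L/\tilde{\mathfrak r}$ together with the identifications of \S3: $U_{\rm quad}(\tilde L)=\bar A(X)$, the free associative $\F_2$-algebra of global dimension $1$, with its length-one Koszul resolution $0\to\bar A\otimes V\to\bar A\to\F_2\to0$, and $\tilde U=\bar A(X)/\tilde{\mathfrak R}$. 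Exactness at $\tilde U$ and at $\tilde U\otimes V$, i.e. $\mathrm{im}\,\theta=\ker d$, is routine, and $\tilde M$ is generated over $\tilde U$ by the $m$ classes $\bar{\tilde\rho}_1,\ldots,\bar{\tilde\rho}_m$ (of degrees $h_1,\ldots,h_m$) because $\tilde{\mathfrak r}$ is generated as an ideal by the $\tilde\rho_j$.

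The main obstacle is the injectivity of $\theta$, and this is exactly where the freeness of $\tilde L$ must enter: by the Proposition that $\tilde L_{\rm mix}(X)$ is the free quadratic Lie algebra with enveloping algebra $\bar A(X)$, the map $\theta$ is the quadratic-Lie analogue of the Crowell--Lyndon exact sequence attached to a free presentation, and its injectivity rests on $\bar A(X)$ being free associative, so that the relation module embeds in the free module $\tilde U\otimes V$. I expect this to be the only genuinely delicate point; everything else is bookkeeping with Poincar\'e series.

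Granting the exact sequence, the conclusion is formal. Additivity of $\Phi$ along a four-term exact sequence gives
$$
\Phi_{\tilde M}(t)-\Phi_{\tilde U}(t)H_X(t)+\Phi_{\tilde U}(t)-1=0,
$$
so $\Phi_{\tilde M}(t)=\Phi_{\tilde U}(t)\bigl(H_X(t)-1\bigr)+1$. On the other hand the surjection $\tilde U\otimes W\twoheadrightarrow\tilde M$, where $W=\oplus_j\F_2 w_j$ with $\deg w_j=h_j$ and $w_j\mapsto\bar{\tilde\rho}_j$, yields the coefficientwise inequality $\Phi_{\tilde M}(t)\le\Phi_{\tilde U}(t)H_\rho(t)$, with equality if and only if this surjection is an isomorphism, that is, if and only if $\tilde M$ is $\tilde U$-free on the $\bar{\tilde\rho}_j$, i.e. the sequence is strongly free. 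Substituting, this reads
$$
1\le\Phi_{\tilde U}(t)\bigl(1-H_X(t)+H_\rho(t)\bigr)
$$
coefficientwise, with equality precisely under strong freeness. Since $\Phi_{\tilde U}(t)$ and $1-H_X(t)+H_\rho(t)$ both have constant term $1$, equality here is equivalent to $\Phi_{\tilde U}(t)\bigl(1-H_X(t)+H_\rho(t)\bigr)=1$, hence to $\Phi_{\tilde U}(t)=1/(1-H_X(t)+H_\rho(t))$. The converse runs backwards: the Poincar\'e-series identity forces equality in the displayed inequality, hence equality of Poincar\'e series across $\tilde U\otimes W\twoheadrightarrow\tilde M$, and a degreewise-surjective map of graded $\F_2$-spaces with equal finite dimensions in each degree is an isomorphism, so $\tilde M$ is free and the sequence is strongly free.
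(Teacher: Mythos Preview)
Your approach is essentially the paper's: both build the four-term exact sequence
\[
0\longrightarrow \tilde M\longrightarrow \bigoplus_{i}\tilde U[e_i]\longrightarrow \tilde U\longrightarrow \F_2\longrightarrow 0
\]
(the paper writes the middle term as $I/\mathfrak R I$), express $\tilde M$ as a quotient $\bigoplus_j\tilde U[h_j]/N$, and read off the equivalence $N=0\iff \tilde U(t)=P(t)$ from the alternating Poincar\'e-series identity. The bookkeeping after the exact sequence is identical in substance.

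The one place your proposal is thinner than the paper is precisely the step you flag: the injectivity of $\theta$. Saying it ``rests on $\bar A(X)$ being free associative'' is not enough. Free-associativity of $\bar A$ gives, via the length-one Koszul resolution, only the exact sequence with $\tilde{\mathfrak R}/\tilde{\mathfrak R}I$ on the left; to replace this by $\tilde M=\tilde{\mathfrak r}/[\tilde{\mathfrak r},\tilde{\mathfrak r}]$ you must show the natural surjection $\tilde M\to\tilde{\mathfrak R}/\tilde{\mathfrak R}I$ is injective, and that is what the ``if'' direction of the theorem actually needs. The paper obtains this by proving that $V=\bar A(X)$ is a \emph{free} module over $W=U_{\rm quad}(\tilde{\mathfrak r})$ (so that $\mathrm{Tor}_1^W(\F_2,V)=0$), and this freeness is not automatic in the quadratic setting: it uses Corollary~\ref{Lq} (that $\tilde L_{\rm mix}(X)^+$ is a free ordinary Lie algebra over $\F_2$ and that $\bar A$ is free over its enveloping algebra $\bar B$) together with Birkhoff--Witt for the ordinary Lie inclusion $\tilde{\mathfrak r}\subset \tilde L^+$. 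Once you invoke that, your argument is complete and coincides with the paper's.
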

\begin{proof}
Let $\mathfrak R$ be the ideal of $\bar A(X)$ generated by
$\tilde{\mathfrak r}$. Then ${\bar A}(X)/\mathfrak R=U_{\rm
quad}({\tilde{\mathfrak g}})=\tilde U$. If $I$ is the augmentation
ideal of $V={\bar A}(X)$ and $J$ is the augmentation ideal of
$W=U_{\rm quad}(\tilde{\mathfrak r})$ then, by tensoring the exact
sequence $0\rightarrow I\rightarrow V\rightarrow \F_2\rightarrow
0$ with $\F_2=W/J$ over $W$, we obtain the exact sequence
$$
{\rm Tor}_1^W(\F_2,V)\rightarrow \tilde{\mathfrak
r}/[\tilde{\mathfrak r},\tilde{\mathfrak r}]\rightarrow
I/{\mathfrak R}I\rightarrow V/\mathfrak R\rightarrow
\F_2\rightarrow 0
$$
using the fact that
\begin{enumerate}
\item If $M$ is a $W$-module then $M\otimes_W(W/J)=M/JM$;
\item $\mathfrak R={\tilde{\mathfrak r}}V=V{\tilde{\mathfrak r}}$;
\item ${\rm Tor}_1^W(\F_2,\F_2)=\tilde{\mathfrak r}/[\tilde{\mathfrak r},\tilde{\mathfrak r}]$ (cf.
\cite{CE}, Ch. XIII, \S 2).
\end{enumerate}

The map $\tilde{\mathfrak r}/[\tilde{\mathfrak r},\tilde{\mathfrak
r}]\rightarrow I/{\mathfrak R}I$ is induced by the inclusion
$\tilde{\mathfrak r}\subseteq I$. Since $I$ is the direct sum of
the left ideals $V\xi_i$. The $\tilde U$-module $I/\mathfrak R$ is
the direct sum of the free $\tilde U$-submodules $Ug_i$ where
$g_i$ is the image of $\xi_i$ in $U={\bar A}(X)/\mathfrak R$.
Since $\tilde{\mathfrak r}\subset \tilde L$ the algebra $V={\bar
A}(X)$ is a free $W$-module by Corollary~\ref{Lq} and the
Birkhoff-Witt Theorem for Lie algebras over $\F_2$. In this case
we have the exact sequence
$$
0\rightarrow\tilde{\mathfrak r}/[\tilde{\mathfrak
r},\tilde{\mathfrak r}]\rightarrow I/{\mathfrak R}I\rightarrow
{\bar A}(X)/\mathfrak R\rightarrow \F_2\rightarrow 0.
$$
Expressing $\tilde M=\tilde{\mathfrak r}/[\tilde{\mathfrak
r},\tilde{\mathfrak r}]$ as a quotient $\tilde U^m/N$ using the
relators $\tilde\rho_i$, we obtain the exact sequence of graded
modules whose homogeneous components are finitely generated free
$\F_2$-modules
$$
0\rightarrow N\rightarrow \oplus_{j=1}^m\tilde
U[h_j]\rightarrow\oplus_{j=1}^d\tilde U[e_j]\rightarrow
U\rightarrow \F_2\rightarrow 0
$$
where $\tilde U[n]=\tilde U$ but with degrees shifted by $n$; by
definition, $\tilde U[n](t)=t^n\tilde U(t)$. We have $N=0$ if and
only if $\tilde M$ is a free $\tilde U$-module on the images of
the $\tilde\rho_i$.

Taking Poincar\'e series in the above long exact sequence, we get
$$
N(t)-(t^{h_1}+\cdots+t^{h_m})\tilde
U(t)+(t^{e_1}+\cdots+t^{e_d})\tilde U(t)-\tilde U(t)+1=0.
$$
Solving for $\tilde U(t)$, we get $\tilde U(t)=P(t)+N(t)P(t)$,
where
$$
P(t)=\frac{1}{1-(t^{e_1}+\cdots+t^{e_d})+t^{h_1}+\cdots+t^{h_m}}.
$$
Hence $N(t)=0\iff\tilde U(t)=P(t)$.
\end{proof}
\medskip

\begin{theorem}\label{sfred}
The sequence $\rho_1,\ldots,\rho_m$ is strongly free in $L=L_{\rm
mix} (X)$ if and only if the sequence
$\tilde\rho_1,\ldots,\tilde\rho_m$ is strongly free in $\tilde L$.
\end{theorem}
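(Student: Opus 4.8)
The plan is to translate both notions of strong freeness into identities between Poincaré series and to bridge them with the relation $\tilde U = U/\pi U$ supplied by the Proposition comparing $U_{\rm mix}$ and $U_{\rm quad}$. Set $D(t)=1-(t^{e_1}+\cdots+t^{e_d})+(t^{h_1}+\cdots+t^{h_m})$. By Theorem~\ref{P}, and more precisely by the exact sequence produced in its proof, one always has $\tilde U(t)D(t)=1+\tilde N(t)$ with $\tilde N(t)\ge 0$ coefficientwise, and $\tilde\rho_1,\ldots,\tilde\rho_m$ is strongly free in $\tilde L$ iff $\tilde N=0$. Next, from $\tilde U=U/\pi U$ the short exact sequence $0\to\pi U\to U\to\tilde U\to 0$ gives, degree by degree, $\tilde U(t)=(1-t)U(t)+E(t)$ with $E(t)\ge 0$ coefficientwise and $E=0$ iff multiplication by $\pi$ is injective on $U$, i.e. iff $U$ is $\F_2[\pi]$-torsion free. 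Finally I would set up the mixed analogue of the five-term sequence of Theorem~\ref{P},
$$0\to N\to\oplus_{j=1}^m U[h_j]\to\oplus_{i=1}^d U[e_i]\to U\to\F_2[\pi]\to 0,$$
valid whenever $U$ is torsion free, in which $N=0$ iff $M$ is free on the images of the $\rho_i$; taking Poincaré series with $\F_2[\pi](t)=1/(1-t)$ yields $U(t)D(t)=1/(1-t)+N(t)$, $N(t)\ge 0$.

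For the forward implication, assume the $\rho_i$ are strongly free. Then $U$ is torsion free, so the mixed sequence applies and $M$ free gives $N=0$, whence $U(t)D(t)=1/(1-t)$. Multiplying by $(1-t)$ and using $E=0$ gives $\tilde U(t)D(t)=(1-t)U(t)D(t)=1$, so $\tilde N=0$ and the reduced sequence is strongly free by Theorem~\ref{P}.

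For the converse, assume $\tilde\rho_1,\ldots,\tilde\rho_m$ is strongly free, so $\tilde U(t)D(t)=1$. Here torsion freeness of $U$ is not yet available, so I would first establish it. The relation $\tilde U(t)=(1-t)U(t)+E(t)$ with $E\ge 0$ gives the coefficientwise upper bound $(1-t)U(t)\le\tilde U(t)$, hence $U(t)\le \tilde U(t)/(1-t)=1/((1-t)D(t))$. A Golod--Shafarevich-type lower bound, coming from the Euler characteristic of the minimal free resolution of $\F_2[\pi]$ over $U$ associated to the presentation $U=A(X)/\mathcal R$ with $\mathcal R$ generated by the $\rho_j$, supplies the reverse inequality $U(t)\ge 1/((1-t)D(t))$. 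Equality forces $E=0$, so $U$ is torsion free; the mixed sequence is now available and $U(t)D(t)=1/(1-t)$ forces $N=0$, i.e. $M$ is free. Thus the $\rho_i$ are strongly free in $L$.

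The main obstacle is the construction of the mixed five-term sequence, which rests on $A=A(X)=U_{\rm mix}(L)$ being free over $W=U_{\rm mix}(\mathfrak r)$ --- the mixed counterpart of the step in Theorem~\ref{P} that invoked Corollary~\ref{Lq} and Birkhoff--Witt over the field $\F_2$. Over $\F_2[\pi]$ this requires $\mathfrak r$ to be a direct summand of $L$, equivalently $\mathfrak g=L/\mathfrak r$ to be torsion free; since $L$ is itself free over $\F_2[\pi]$ (being a Lie subalgebra of the free associative algebra $A(X)$, hence torsion free), one can then choose a homogeneous basis of $L$ restricting to one of $\mathfrak r$ and run the mixed Birkhoff--Witt theorem to exhibit $A$ as free over $W$ and identify ${\rm Tor}_1^W(\F_2[\pi],\F_2[\pi])$ with $M$. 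When $U$ is torsion free this holds because $\mathfrak g\hookrightarrow U$ is then torsion free; the genuinely delicate points are verifying this freeness in the mixed setting and establishing the precise Golod--Shafarevich lower bound used in the converse.
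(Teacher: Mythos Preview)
Your forward direction is correct and essentially the paper's argument. The converse, however, has a real gap. The Golod--Shafarevich input you invoke --- the Euler characteristic of the partial resolution $\bigoplus_j U[h_j]\to\bigoplus_i U[e_i]\to U\to\F_2[\pi]\to 0$ --- yields only the coefficientwise inequality $U(t)D(t)\ge 1/(1-t)$. You cannot divide this by $D(t)$ to obtain $U(t)\ge 1/((1-t)D(t))$, because $D(t)$ has negative coefficients and such division does not preserve coefficientwise inequalities. So the two-sided squeeze on $U(t)$ that you propose is not justified.

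The paper circumvents this by observing that the five-term sequence
\[
0\longrightarrow K\longrightarrow M\longrightarrow\bigoplus_i U[e_i]\longrightarrow U\longrightarrow\F_2[\pi]\longrightarrow 0
\]
holds \emph{unconditionally}: one checks directly that $\mathfrak R=A\mathfrak r+\mathfrak R I$, so the image of $M$ in $I/\mathfrak R I$ is exactly $\mathfrak R/\mathfrak R I$, with no freeness of $A$ over $W$ required (that freeness is used only in the forward direction, to force $K=0$). The paper then brings in the second upper bound $M(t)\le\tilde M(t)/(1-t)$, which follows because the composite $\bigoplus_j\tilde U[h_j]\twoheadrightarrow M/\pi M\twoheadrightarrow\tilde{\mathfrak r}/[\tilde{\mathfrak r},\tilde{\mathfrak r}]$ is an isomorphism by hypothesis; the squeeze is carried out after dividing the Euler-characteristic identity by $1-\sum_i t^{e_i}$ (whose inverse has nonnegative coefficients), not by $D(t)$. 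Your own sequence with $N$ is in fact also unconditionally exact for the same reason; using it directly gives $U(t)D(t)=1/(1-t)+N(t)$, and combining with $(1-t)U(t)=1/D(t)-E(t)$ yields $(1-t)N(t)+E(t)D(t)=0$, from which $E=N=0$ follows by an easy induction on the degree since $D_0=1$. So your strategy can be salvaged --- but only by dropping the mistaken assumption that the five-term sequence needs torsion-freeness, and replacing the unjustified inequality $U(t)\ge 1/((1-t)D(t))$ with one of these two arguments.
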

\begin{proof}
If $\rho_1,\ldots,\rho_m$ is a strongly free sequence then the
enveloping algebra $U$ of the mixed Lie algebra $\mathfrak
g=L/\mathfrak r$ is a torsion free $\F_2[\pi]$-module. By the
Birkhoff-Witt Theorem for mixed Lie algebras, the canonical
mapping of $\mathfrak g$ into $U$ is injective. Hence $\mathfrak
g^+=L^+/\mathfrak r$ is a torsion free $\F_2[\pi]$-module. If $B$
is the subalgebra of $A=A(X)$ generated by $L^+$ then $B$ is the
enveloping algebra of $L^+$. By Birkhoff-Witt the canonical
mapping of the enveloping algebra $W$ of $\mathfrak r$ into $B$ is
injective and $B$ is a free $W$-module. Since $A$ is a free
$B$-module it follows that $A$ is a free $W$-module. Thus, if
$M=\mathfrak r/[\mathfrak r,\mathfrak r]$ and $\mathfrak R$ the
ideal of $A$ generated by $\mathfrak r$ and $I$ the augmentation
ideal of $A$, we have an exact sequence
$$
0\rightarrow M\rightarrow I/{\mathfrak R}I\rightarrow A/\mathfrak
R\rightarrow \F_2[\pi]\rightarrow 0.
$$
As in the proof of Theorem~\ref{P} we obtain that the Poincar\'e
series of $U$ is
$$
Q(t)=\frac{1}{(1-t)(1-(t^{e_1}+\cdots+t^{e_d})+t^{h_1}+\cdots+t^{h_m})}.
$$
If $\tilde U$ is the enveloping algebra of $\tilde
L/(\tilde\rho_1,\ldots,\tilde\rho_m)$ we have $\tilde U=U/\pi
U=U\otimes_{\F_2}\F_2[\pi]$. Since $U$ is torsion free over
$\F_2[\pi]$ the Poincar\'e series of $\bar U$ is $(1-t)Q(t)$ which
proves that the sequence $\tilde\rho_1,\ldots,\tilde\rho_m$ is
strongly free.

Conversely, suppose that the sequence
$\tilde\rho_1,\ldots,\tilde\rho_m$ is strongly free in $\tilde L$.
We have the exact sequence of graded vector spaces over $\F_2$
$$
0\rightarrow K\rightarrow M \rightarrow U[e_1]\oplus\cdots\oplus
U[e_d]\rightarrow U\rightarrow \F_2\rightarrow 0.
$$
Taking Poincar\'e series we get
$$
K(t)-M(t)+(t^{e_1}+\cdots+t^{e_d})U(t)-U(t)+\frac{1}{1-t}=0
$$
from which we get
$M(t)=K(t)-(1-(t^{e_1}+\cdots+t^{e_d}))U(t)+1/(1-t)$. Hence
$$
\frac{M(t)}{1-(t^{e_1}+\cdots+t^{e_d})}=\frac{K(t)}{1-(t^{e_1}+\cdots+t^{e_d})}
+\frac{1}{(1-t)(1-(t^{e_1}+\cdots+t^{e_d}))}-U(t).
$$
Now suppose that $\tilde{\rho}_1,\ldots,\tilde{\rho}_m$ is
strongly free. Then, if $\tilde{\mathfrak r}$ is the ideal of
$\tilde{L}$ generated by $\tilde{\rho}_1,\ldots,\tilde{\rho}_m$
and $\tilde M=\tilde{\mathfrak r}/[\tilde{\mathfrak
r},\tilde{\mathfrak r}]$, we have surjections
$$
\tilde{U}[h_1]\oplus\cdots\oplus\tilde{U}[h_m]\rightarrow
\tilde{M}\rightarrow \tilde{\mathfrak r}/[\tilde{\mathfrak
r},\tilde{\mathfrak r}]
$$
whose composite is an isomorphism. It follows that
$$\tilde{M}\cong\tilde{\mathfrak r}/[\tilde{\mathfrak
r},\tilde{\mathfrak r}]\cong
\tilde{U}[h_1]\oplus\cdots\oplus\tilde{U}[h_m],
$$
$$
M(t)\le\frac{\tilde{M}(t)}{1-t}=
\frac{1}{1-t}\cdot\frac{t^{h_1}+\cdots+t^{h_m}}{1-(t^{e_1}+\cdots+t^{e_d})+t^{h_1}+\cdots+t^{h_m}},
$$
$$
U(t)\le
\frac{\tilde{U}(t)}{1-t}=\frac{1}{1-t}\cdot\frac{1}{1-(t^{e_1}+\cdots+t^{e_d})+t^{h_1}+\cdots+t^{h_m}}\cdot
$$
Using the fact that $K(t)\ge0$, we get
\begin{align*}
&\frac{M(t)}{1-(t^{e_1}+\cdots+t^{e_d})}\ge
\frac{1}{(1-t)(1-(t^{e_1}+\cdots+t^{e_d}))}-\frac{\tilde{U}(t)}{1-t}=\\
&\frac{1}{1-t}(\frac{1}{(1-(t^{e_1}+\cdots+t^{e_d})}-\frac{1}{1-(t^{e_1}+\cdots+t^{e_md})+t^{h_1}+\cdots+t^{h_m}})=\\
&\frac{\tilde{M}(t)}{(1-t)(1-(t^{e_1}+\cdots+t^{e_d}))}\ge\frac{M(t)}{1-(t^{e_1}+\cdots+t^{e_d})}.
\end{align*}
It follows that $K(t)=0$, $U(t)=\tilde{U}(t)/(1-t)$ and
$M(t)=\tilde{M}/(1-t)$. Hence $U$ is a free $\F_2[\pi]$-module and
$M$ is a free $U$-module since we have a natural surjection
$$
U[h_1]\oplus\cdots U[h_m]\rightarrow M
$$
with both sides having the same Poincar\'e series.
\end{proof}
\medskip

In general it is very difficult to determine whether a sequence in
$\tilde L$ is strongly free but we can construct a large supply
using the following elimination theorem for free quadratic Lie
algebras.
\medskip

\begin{theorem}[(Elimination Theorem)]
Let $S$ be a subset of the weighted set $X$ and let $\mathfrak a$
be the ideal of the free quadratic Lie algebra\, $\tilde L_{\rm
mix} (X)$ generated by $X-S$. Then $\mathfrak a$ is a free
quadratic Lie algebra with basis
$$
\ad(\sigma_1)\ad(\sigma_2)\cdots\ad(\sigma_n)(\xi),\quad (n\ge0,\
\sigma_i\in S,\ \xi\in X-S).
$$
\end{theorem}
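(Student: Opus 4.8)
The plan is to realize all of $\tilde L_{\rm mix}(X)$ as a semidirect product in which $\mathfrak a$ is one of the factors, exactly as in Lazard's elimination theorem for free Lie algebras, and then to transport the free structure across an explicit isomorphism. Write $T=X-S$ and let $Y$ be the weighted set whose elements are abstract symbols $y_{(\sigma_1,\ldots,\sigma_n,\xi)}$ indexed by the tuples with $n\ge0$, $\sigma_i\in S$ and $\xi\in T$, the weight of $y_{(\sigma_1,\ldots,\sigma_n,\xi)}$ being $e_{\sigma_1}+\cdots+e_{\sigma_n}+e_\xi$. Put $\mathfrak c=\tilde L_{\rm mix}(Y)$ and $\mathfrak h=\tilde L_{\rm mix}(S)$, the free quadratic Lie algebras on $Y$ and on $S$. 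The goal is an isomorphism $\tilde L_{\rm mix}(X)\cong\mathfrak c\times_f\mathfrak h$ carrying $\mathfrak a$ onto $\mathfrak c$ and carrying $y_{(\sigma_1,\ldots,\sigma_n,\xi)}$ to $\ad(\sigma_1)\cdots\ad(\sigma_n)(\xi)$; since $\mathfrak c$ is by construction free on $Y$, this yields the theorem.

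First I would construct the action. For each $\sigma\in S$ define a map $D_\sigma$ on the generators of $\mathfrak c$ by prepending $\sigma$, that is $D_\sigma\bigl(y_{(\sigma_1,\ldots,\sigma_n,\xi)}\bigr)=y_{(\sigma,\sigma_1,\ldots,\sigma_n,\xi)}$, and extend it to $\mathfrak c$ by imposing the Leibniz rule (Der 3) over the free Lie part $\bar L(Y)$ and setting $D_\sigma(P(y))=[y,D_\sigma(y)]$ on the extra degree-one generators, as forced by (Der 2). A short computation, using $P(y+y')=P(y)+P(y')+[y,y']$ together with the symmetry of the bracket over $\F_2$, shows that (Der 2) then holds for \emph{every} degree-one element, so that $D_\sigma$ is a quadratic derivation of $\mathfrak c$ of degree $e_\sigma$. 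Since ${\rm Der}_{\rm quad}(\mathfrak c)$ is itself a quadratic Lie algebra and $\mathfrak h=\tilde L_{\rm mix}(S)$ is free, the weight-preserving assignment $\sigma\mapsto D_\sigma$ extends uniquely to a homomorphism $f\colon\mathfrak h\to{\rm Der}_{\rm quad}(\mathfrak c)$, and $\mathfrak c\times_f\mathfrak h$ is a quadratic Lie algebra by the previous theorem.

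Next I would write down two mutually inverse homomorphisms. Using freeness of $\tilde L_{\rm mix}(X)$ on $X$, define $\Psi\colon\tilde L_{\rm mix}(X)\to\mathfrak c\times_f\mathfrak h$ by $\sigma\mapsto\sigma$ for $\sigma\in S$ and $\xi\mapsto y_{(\xi)}$ for $\xi\in T$. In the other direction, freeness of $\mathfrak h$ and of $\mathfrak c$ gives homomorphisms $\theta_{\mathfrak h}\colon\mathfrak h\to\tilde L_{\rm mix}(X)$ (the inclusion $S\hookrightarrow X$) and $\theta_{\mathfrak c}\colon\mathfrak c\to\tilde L_{\rm mix}(X)$ (sending $y_{(\sigma_1,\ldots,\sigma_n,\xi)}$ to $\ad(\sigma_1)\cdots\ad(\sigma_n)(\xi)$); the identity $\ad(\sigma)\ad(\sigma_1)\cdots\ad(\sigma_n)(\xi)=[\sigma,\ad(\sigma_1)\cdots\ad(\sigma_n)(\xi)]$ is precisely the compatibility $\theta_{\mathfrak c}(f(\sigma)(c))=[\theta_{\mathfrak h}(\sigma),\theta_{\mathfrak c}(c)]$ needed to glue them into a homomorphism $\Theta\colon\mathfrak c\times_f\mathfrak h\to\tilde L_{\rm mix}(X)$. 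Checking $\Theta\Psi$ and $\Psi\Theta$ on generators, where $\Psi(\ad(\sigma_1)\cdots\ad(\sigma_n)(\xi))=D_{\sigma_1}\cdots D_{\sigma_n}(y_{(\xi)})=y_{(\sigma_1,\ldots,\sigma_n,\xi)}$ because each $\ad(\sigma_i)$ acts on $\mathfrak c$ through $f(\sigma_i)=D_{\sigma_i}$, shows they are inverse to one another. Finally $\mathfrak c$ is the ideal of $\mathfrak c\times_f\mathfrak h$ killed by the projection to $\mathfrak h$, and it is generated as an ideal by the $y_{(\xi)}=\Psi(\xi)$; hence $\Psi(\mathfrak a)=\mathfrak c$, and $\Theta$ restricts to an isomorphism of $\mathfrak c=\tilde L_{\rm mix}(Y)$ onto $\mathfrak a$ sending the free generator $y_{(\sigma_1,\ldots,\sigma_n,\xi)}$ to $\ad(\sigma_1)\cdots\ad(\sigma_n)(\xi)$.

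I expect the main obstacle to be the first step: one must verify that the prepend-$\sigma$ rule genuinely defines a quadratic derivation, the only real subtlety being its interaction with the operator $P$ through (Der 2), which is where the characteristic-two, quadratic structure departs from the classical elimination theorem; everything afterward is a formal manipulation of universal properties. As a check on the answer, and as an alternative to producing the explicit inverse, one can instead compare Poincar\'e series once the semidirect decomposition is in place: the enveloping-algebra factorization coming from $\mathfrak a\times_f\mathfrak h$ gives $U_{\rm quad}(\mathfrak a)(t)=\bar A(X)(t)/\bar A(S)(t)=(1-\mathfrak s(t))/(1-\mathfrak s(t)-\mathfrak t(t))$, with $\mathfrak s(t)=\sum_{\sigma\in S}t^{e_\sigma}$ and $\mathfrak t(t)=\sum_{\xi\in T}t^{e_\xi}$, which is exactly the Poincar\'e series of the free associative algebra on $Y$, so the surjection $\bar A(Y)\to U_{\rm quad}(\mathfrak a)$ of enveloping algebras is an isomorphism and $\tilde L_{\rm mix}(Y)\to\mathfrak a$ is then an isomorphism by Birkhoff--Witt.
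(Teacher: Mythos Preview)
Your proposal is correct and follows essentially the same route as the paper: construct the prepend-$\sigma$ derivations on the free quadratic Lie algebra $\tilde L_{\rm mix}(Y)$, form the semidirect product with $\tilde L_{\rm mix}(S)$, and then exhibit mutually inverse quadratic Lie algebra homomorphisms between this semidirect product and $\tilde L_{\rm mix}(X)$, identifying $\mathfrak a$ with the free factor. The paper organizes the argument slightly differently (it first records that $\tilde L_{\rm mix}(X)$ is itself the semidirect product of $\mathfrak a$ and $\tilde L_{\rm mix}(S)$, then compares the two semidirect products), and your Poincar\'e-series sanity check is an extra not in the paper, but the substance is the same.
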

\begin{proof}
We first show that the quadratic Lie algebra $\tilde L_{\rm
mix}(X)$ is the semi-direct product of the quadratic Lie algebras
$\mathfrak a$ and ${\tilde L}_{\rm mix}(S)$. Let $f$ be the
adjoint representation of $\tilde L_{\rm mix}(S)$ on $\mathfrak
a$. Then $f$ is a homomorphism of the quadratic Lie algebra
${\tilde L}_{\rm mix}(S)$ into the quadratic Lie algebra $\rm
Der_{\rm quad}(\mathfrak a$) of derivations of the quadratic Lie
algebra $\mathfrak a$. More precisely, if $f(\sigma)=D$ then $
f(P(\sigma))=D^2$ and $D(P(\xi))= [\xi,D(\xi)]$ if $\sigma,\xi$
are homogeneous of degree $1$. Every element of ${\tilde L}_{\rm
mix} (X)$ can be uniquely written in the form $\xi+\sigma$ with
$\xi\in\mathfrak a,\sigma\in{\tilde L}_{\rm mix}(S)$. We have
$$
[\xi_1+\sigma_1,\xi_2+\sigma_2]=[\xi_1,\xi_2]+f(\sigma_1)(\xi_2)+f(\sigma_2)(\xi_1)+[\sigma_1,\sigma_2]
$$
and $P(\xi+\sigma)=P(\xi)+f(\sigma)(\xi)+P(\sigma)$ if
$\xi,\sigma$ are of degree $1$. As a quadratic Lie algebra,
$\mathfrak a$ is generated by the family of elements
$$
\ad(\sigma_1)\ad(\sigma_2)\cdots\ad(\sigma_n)(\xi),
\quad(n\ge0,\sigma_i\in S,\xi\in X-S).
$$
If $\sigma\in S$ and $f(\sigma)=D$ then
$$
D(\ad(\sigma_1)\ad(\sigma_2)\cdots\ad(\sigma_n)(\xi))=\ad(\sigma)\ad(\sigma_1)\ad(\sigma_2)\cdots\ad(\sigma_n)(\xi).
$$

Let $T$ be the family of elements
$(\sigma_1,\sigma_2,\ldots,\sigma_n,\xi)$ with $n\ge0,\sigma_i\in
S,\xi\in X-S$ and weight equal to the sum of the weights of the
components $\sigma_i,\xi$. Let $\varphi_1$ be the quadratic Lie
algebra homomorphism of ${\tilde L}_{\rm mix}(T)$ into $\mathfrak
a$ such that
$$
\varphi_1(\sigma_1,\sigma_2,\ldots,\sigma_n,\xi)=\ad(\sigma_1)\ad(\sigma_2)\cdots\ad(\sigma_n)(\xi).
$$
Since $\varphi_1$ is surjective it suffices to prove $\varphi_1$
is injective. Let $g$ be the quadratic Lie algebra homomorphism of
${\tilde L}_{\rm mix}(S)$ into ${\rm Der}_{\rm quad}({\tilde
L}_{\rm mix}(T))$ where, for $\sigma\in S$, we define $g(\sigma)$
be the derivation which takes
$(\sigma_1,\sigma_2,\ldots,\sigma_n,\xi)$ into
$(\sigma,\sigma_1,\sigma_2,\ldots,\sigma_n,\xi)$. That such a
derivation exists follows from the fact that the derivations $D$
of the free Lie algebra $\bar L(T)$ can be assigned arbitrarily
and can be uniquely extended to derivations of the quadratic Lie
algebra $\tilde L(T)$ by defining $D(\xi^2)=[\xi,D(\xi)]$ if $\xi$
is an element of $T$ of degree $1$. Let $L$ be the semi-direct
product of ${\tilde L}_{\rm mix}(T)$ and ${\tilde L}_{\rm mix}(S)$
with respect to the homomorphism $g$. Every element of $L$ can be
uniquely written in the form $\xi+\sigma$ with $\xi\in{\tilde
L}_{\rm mix}(T),\sigma\in{\tilde L}_{\rm mix}(S)$. Then
$$
[\xi_1+\sigma_1,\xi_2+\sigma_2]=[\xi_1,\xi_2]+g(\sigma_1)(\xi_2)+g(\sigma_2)(\xi_1)+[\sigma_1,\sigma_2].
$$
and $P(\xi+\sigma)=P(\xi)+g(\sigma)(\xi)+P(\sigma)$ if
$\xi,\sigma$ are of degree $1$. Since
$\varphi_1(g(\sigma)(\xi))=f(\sigma)(\varphi_1(\xi))$ we see that
there is a unique homomorphism $\varphi$ of $L$ into ${\tilde
L}_{\rm mix}(X)$ which restricts to $\varphi_1$ and is the
identity on ${\tilde L}_{\rm mix}(S)$. If $\psi$ is the
homomorphism of $\tilde L(X)$ into $L$ which is the identity on
$X$ we have $\varphi\circ \psi$ and $\psi\circ\varphi$ identity
maps so that $\varphi$ and hence $\varphi_1$ is bijective.
\end{proof}
\medskip

\begin{corollary}
If $B$ is the enveloping algebra of ${\tilde L}_{\rm
mix}(S)={\tilde L}_{\rm mix}(X)/\mathfrak a$ then, via the adjoint
representation, $\mathfrak a/[\mathfrak a,\mathfrak a]$ is a free
$B$-module with basis the images of the elements $\xi\in X-S$.
\end{corollary}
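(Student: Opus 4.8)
The plan is to deduce the statement directly from the Elimination Theorem together with the semidirect-product description established in its proof. That proof exhibits $\tilde L_{\rm mix}(X)=\mathfrak a\times_f\tilde L_{\rm mix}(S)$, where $f$ is the adjoint action of $\tilde L_{\rm mix}(S)$ on $\mathfrak a$ by derivations. Since $\mathfrak a$ acts trivially on $\mathfrak a/[\mathfrak a,\mathfrak a]$, this action descends to $\tilde L_{\rm mix}(S)$ and hence to its enveloping algebra $B=U_{\rm quad}(\tilde L_{\rm mix}(S))=\bar A(S)$; this is the module structure in the statement. I would then exploit the explicit free basis $T=\{\ad(\sigma_1)\cdots\ad(\sigma_n)(\xi)\mid n\ge0,\ \sigma_i\in S,\ \xi\in X-S\}$ furnished by the Elimination Theorem, under which $\mathfrak a\cong\tilde L_{\rm mix}(T)$.

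First I would compute the abelianization of the free quadratic Lie algebra $\tilde L_{\rm mix}(T)$. Writing $\tilde L_{\rm mix}(T)=\bar L(T)+\sum_{t\in T_1}\F_2\,P(t)$ as in \S3 (with $T_1=\{\xi\in X-S\mid\deg\xi=1\}$ the degree-$1$ generators), every bracket $[P(t),y]$ collapses by (QL2) to a double bracket lying in $[\bar L(T),\bar L(T)]$, so $[\mathfrak a,\mathfrak a]=[\bar L(T),\bar L(T)]$ and therefore
$$
\mathfrak a/[\mathfrak a,\mathfrak a]=\Bigl(\bar L(T)/[\bar L(T),\bar L(T)]\Bigr)\oplus\bigoplus_{t\in T_1}\F_2\,\overline{P(t)}.
$$
The first summand is the abelianized free Lie algebra, with its canonical $\F_2$-basis indexed by $T$; the second consists of the $P$-images of the degree-$1$ generators.

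Next I would match this with $B$. The words $\sigma_{i_1}\cdots\sigma_{i_n}$ form an $\F_2$-basis of $B=\bar A(S)$, and the adjoint action gives $(\sigma_{i_1}\cdots\sigma_{i_n})\cdot\bar\xi=\overline{\ad(\sigma_{i_1})\cdots\ad(\sigma_{i_n})(\xi)}$. Thus $b\otimes\xi\mapsto b\cdot\bar\xi$ sets up a bijection between the monomial basis of $B$ times $X-S$ and the generating set $T$. Consequently the images of $X-S$ generate a free $B$-submodule whose $\F_2$-basis is exactly $\{\bar t\mid t\in T\}$; the remaining basis vectors $\overline{P(\xi)}$ $(\xi\in T_1)$ are the values of the module's own $P$-operation on these free generators, so that, as a $B$-module equipped with its compatible $P$-structure, $\mathfrak a/[\mathfrak a,\mathfrak a]$ is free on the images of $X-S$.

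The hard part will be the bookkeeping of $P$, which is where the case $p=2$ departs from the classical elimination corollary. I would verify via the derivation axiom (Der 2) that $\ad(\sigma)(P(\xi))=[\xi,\ad(\sigma)(\xi)]\in[\mathfrak a,\mathfrak a]$, so that each $\overline{P(\xi)}$ is annihilated by the augmentation ideal of $B$; this confirms that the $P$-images occur precisely as $P$ applied to the free generators, and not as independent free generators, so they are compatible with---rather than destructive of---the freeness asserted.
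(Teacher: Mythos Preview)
Your approach via the Elimination Theorem is exactly the intended one (the paper offers no separate argument), and your computation of the abelianization of $\tilde L_{\rm mix}(T)$ is correct: $[\mathfrak a,\mathfrak a]=[\bar L(T),\bar L(T)]$, so
\[
\mathfrak a/[\mathfrak a,\mathfrak a]\;\cong\;\bigoplus_{t\in T}\F_2\,\bar t\ \oplus\ \bigoplus_{t\in T_1}\F_2\,\overline{P(t)}.
\]
You are also right that the first summand, under the adjoint action, is exactly the free $B$-module on $\{\bar\xi:\xi\in X-S\}$.

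The gap is in your final move. The extra classes $\overline{P(\xi)}$ (for $\xi\in X-S$ of degree $1$) are \emph{not} in the $B$-span of the $\bar\xi$'s: you yourself show that the augmentation ideal of $B$ annihilates them, so each $\overline{P(\xi)}$ generates a copy of $\F_2$, not of $B$. Declaring the module ``free as a $B$-module with $P$-structure'' is not a recognised notion and does not establish freeness as a $B$-module. A one-line check makes this concrete: with $X=\{\xi_1,\xi_2\}$ of degree $1$ and $S=\{\xi_1\}$, the degree-$2$ piece of $\mathfrak a/[\mathfrak a,\mathfrak a]$ is two-dimensional (spanned by $\overline{[\xi_1,\xi_2]}$ and $\overline{\xi_2^2}$), whereas a free $B$-module on $\bar\xi_2$ is one-dimensional in degree $2$.

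What is going on is that the Corollary, as stated, is only correct when $X-S$ contains no elements of degree $1$, i.e.\ when $\mathfrak a_1=0$ so that $\mathfrak a^*=[\mathfrak a,\mathfrak a]$. That is precisely the situation in which it is applied later: in the proof of Theorem~\ref{SF} the role of $X-S$ is played by the elements $\ad(\sigma_1)\cdots\ad(\sigma_n)(\rho_j)$, all of degree $\ge 2$, and the argument there is phrased modulo $\mathfrak a^*$. The clean fix is either to add the hypothesis that the elements of $X-S$ have degree $\ge 2$, or to replace $[\mathfrak a,\mathfrak a]$ by $\mathfrak a^*$ in the statement; under either correction your argument (with the $P$-summand now absorbed) goes through verbatim.
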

\medskip

Let $X$ be a finite weighted set and let $S\subset X$. Let
$\mathfrak a$ be the ideal of $\tilde L=\tilde L_{\rm mix}(X)$
generated by $X-S$ and let $B$ be the enveloping algebra of
$\tilde L/\mathfrak a$.

\begin{theorem}\label{SF}
Let $T=\{\tau_1,\ldots,\tau_t\}\subset \mathfrak a$ whose elements
are homogeneous of degree $\ge2$ and $B$-independent modulo
$\mathfrak a^*$. If $\rho_1,\ldots,\rho_m$ are homogeneous
elements of $\mathfrak a$ which lie in the $\F_2$-span of\, $T$
modulo $\mathfrak a^*$ and which are linearly independent over
$\F_2$ modulo $\mathfrak a^*$ then the sequence
$\rho_1,\ldots,\rho_m$ is strongly free in $\tilde L$.
\end{theorem}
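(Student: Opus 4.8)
The plan is to verify the Poincaré–series criterion of Theorem~\ref{P}: since the $\rho_i$ already lie in $\tilde L=\tilde L_{\rm mix}(X)$, it suffices to show that the enveloping algebra $\tilde U=U_{\rm quad}(\tilde L/\tilde{\mathfrak r})$ has Poincar\'e series $1/(1-\sum_i t^{e_i}+\sum_j t^{h_j})$, where $\tilde{\mathfrak r}$ is the ideal of $\tilde L$ generated by $\rho_1,\dots,\rho_m$ and $h_j=\deg\rho_j$. Everything is to be extracted from the semidirect–product picture supplied by the Elimination Theorem, namely $\tilde L=\mathfrak a\rtimes\tilde L_{\rm mix}(S)$ with $\mathfrak a$ a free quadratic Lie algebra, together with its Corollary, which says that the module of indecomposables $\mathfrak a/\mathfrak a^*$ is, via the adjoint action, a free $B$-module on the images of the elements of $X-S$, where $B=U_{\rm quad}(\tilde L_{\rm mix}(S))=\bar A(S)$.

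First I would use the hypotheses to locate the $\rho_i$ inside this free $B$-module. The images $\bar\tau_1,\dots,\bar\tau_t$ in $\mathfrak a/\mathfrak a^*$ are part of a free $B$-basis, and $\bar\rho_i=\sum_j c_{ij}\bar\tau_j$ with \emph{scalar} coefficients $c_{ij}\in\F_2$; since the $\bar\rho_i$ are $\F_2$-independent, the matrix $(c_{ij})$ has rank $m$. Because the coefficients are constants, an invertible completion of $(c_{ij})$ over $\F_2$ turns the free basis into a new free $B$-basis containing $\bar\rho_1,\dots,\bar\rho_m$. Hence $\bigoplus_i B\bar\rho_i$ is a free $B$-submodule of rank $m$ that is a direct summand of $V=\mathfrak a/\mathfrak a^*$, and the quotient $V'=V/\bigoplus_i B\bar\rho_i$ is again free, with $V'(t)=V(t)-B(t)\sum_j t^{h_j}$.

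Next I would descend this to the Lie algebra itself. Pulling the free generators back, $\tilde{\mathfrak r}$ is the ideal of $\mathfrak a$ generated by the $\ad(S)$-orbit of the $\rho_i$; under the free $B$-module identification this orbit spans exactly the summand $\bigoplus_i B\bar\rho_i$, so $\tilde{\mathfrak r}$ is generated by part of a free generating set of the free quadratic Lie algebra $\mathfrak a$. Consequently $\mathfrak a/\tilde{\mathfrak r}$ is itself free quadratic, with indecomposables the free $B$-module $V'$, and $\tilde L/\tilde{\mathfrak r}=(\mathfrak a/\tilde{\mathfrak r})\rtimes\tilde L_{\rm mix}(S)$. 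By Birkhoff–Witt for the semidirect product, $\tilde U=U_{\rm quad}(\mathfrak a/\tilde{\mathfrak r})\#\,B$ is free as a right $B$-module, so $\tilde U(t)=\bar C(t)\,B(t)$ with $\bar C=U_{\rm quad}(\mathfrak a/\tilde{\mathfrak r})$; and $\bar C$ being free associative on $V'$, $\bar C(t)=1/(1-V'(t))$. Writing $s(t)=\sum_{\sigma\in S}t^{\deg\sigma}$ and $x(t)=\sum_{\xi\in X-S}t^{\deg\xi}$, we have $B(t)=1/(1-s(t))$ and $V(t)=B(t)x(t)$, so $V'(t)=(x(t)-\sum_j t^{h_j})/(1-s(t))$ and a direct substitution collapses the product to $\tilde U(t)=1/(1-s(t)-x(t)+\sum_j t^{h_j})$, which is the series of Theorem~\ref{P} since $s(t)+x(t)=\sum_i t^{e_i}$.

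I expect the main obstacle to be the middle step: showing rigorously that $\tilde{\mathfrak r}$ — a priori the ideal of $\mathfrak a$ generated by the whole infinite orbit $\{\ad(\sigma_1)\cdots\ad(\sigma_n)(\rho_i)\}$ — is generated by part of a free generating set of $\mathfrak a$, so that $\mathfrak a/\tilde{\mathfrak r}$ is again free quadratic with indecomposables $V'$. This is precisely where the free $B$-module structure of $\mathfrak a/\mathfrak a^*$ does the work, converting the $B$-orbit of the $\rho_i$ into a subobject spanned by part of an $\F_2$-basis of indecomposables, after which the freeness of the quotient and the factorisation of Poincar\'e series through the smash product are formal. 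The one point needing care is that ``$B$-independent modulo $\mathfrak a^*$'' be read as independence modulo $B_+$, i.e.\ as genuine membership in a free basis via graded Nakayama, which is what validates the completion argument of the second paragraph.
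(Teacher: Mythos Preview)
Your argument is correct, but it takes a different route from the paper's proof. Both proofs share the two essential ingredients: the linear-algebra step showing that the $\bar\rho_j$ are $B$-independent in $\mathfrak a/\mathfrak a^*$ (you do this via the rank-$m$ scalar matrix $(c_{ij})$, the paper via an upper-triangular change of basis $\gamma_i=a_i\alpha_i+\sum a_{ij}\beta_j$), and the consequence via Elimination that the family $\{\ad(\sigma_1)\cdots\ad(\sigma_n)(\rho_j)\}$ is part of a free generating set of the free quadratic Lie algebra~$\mathfrak a$. From there the arguments diverge. The paper verifies the \emph{definition} of strong freeness directly: once $\mathfrak r$ is generated by part of a basis of $\mathfrak a$, Elimination gives that $M=\mathfrak r/[\mathfrak r,\mathfrak r]$ is free over $C=U_{\rm quad}(\mathfrak a/\mathfrak r)$, and then the decomposition of each $u\in\tilde U$ as $\sum \bar c_{ij}w_j$ with $c_{ij}\in C$ and $w_j$ monomials in $S$ upgrades this to freeness over $\tilde U$. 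You instead invoke the Poincar\'e-series criterion of Theorem~\ref{P}: you use the same $C$-$B$ decomposition of $\tilde U$ (as a smash product) to compute $\tilde U(t)=\bar C(t)B(t)$, and reduce the identity $\tilde U(t)=1/(1-\sum t^{e_i}+\sum t^{h_j})$ to the elementary calculation with $V'(t)$. Your route is a bit more structural and makes the role of the semidirect product explicit; the paper's route is shorter and avoids re-proving what Theorem~\ref{P} already encodes.

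One correction to your self-diagnosis: your final worry is misplaced. You do \emph{not} need the stronger reading of ``$B$-independent'' as ``part of a $B$-basis'' (and indeed $B$-independence alone does not give that, as $b\in B_+$ shows). What you actually use is only that (i) $\bigoplus_j B\bar\rho_j$ is free of the right rank, hence $V'(t)=V(t)-B(t)\sum t^{h_j}$, and (ii) the $\ad(S)$-orbit of the $\rho_j$ is $\F_2$-independent in $\mathfrak a/\mathfrak a^*$, hence part of a free generating set of $\mathfrak a$, so that $\mathfrak a/\tilde{\mathfrak r}$ is free quadratic with indecomposables $V'$. Both (i) and (ii) follow from plain $B$-independence of the $\bar\rho_j$, which is exactly what your rank-$m$ matrix argument (and the paper's triangular argument) establishes. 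So you can drop the ``direct summand'' and ``graded Nakayama'' phrasing and the proof still goes through.
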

\begin{proof}
Let $\mathfrak r$ is the ideal of $\tilde L$ generated by
$\rho_1,\ldots,\rho_m$ and let $U=U_{\rm quad}$ be the enveloping
algebra of $\tilde L/\mathfrak r$. The elements
$$
\ad(\sigma_1)\ad(\sigma_2)\cdots \ad(\sigma_n)(\rho_j)
$$
with $1\le j\le m$, $n\ge0$, $\sigma_i\in S$ generate $\mathfrak
r$ as an ideal of the quadratic Lie algebra $\mathfrak a$. Suppose
that these elements form part of a basis of the free quadratic Lie
algebra $\mathfrak a$. The elimination theorem then shows that
$M={\mathfrak r}/[{\mathfrak r},{\mathfrak r}]$ is a free module
over the enveloping algebra $C$ of ${\mathfrak a}/{\mathfrak r}$
with the images of these elements as basis. Now let $\mu_i$ be the
image of $\rho_i$ in $M$ and suppose that $\sum_iu_i\mu_i=0$ with
$u_i\in U$. Then, since every $u_i$ can be written in the form
$$
u_i=\sum \bar{c}_{ij}w_j
$$
where the $w_j$ are distinct products of elements of $S$ and
$c_{ij}\in C$ with $\bar{c}_{ij}$ its image in $U$, the dependence
relation
$$
0=\sum_iu_i\mu_i=\sum_{i,j}(\bar{c}_{ij}\,w_j)\mu_i=\sum_{i,j}c_{ij}(w_j\mu_i)
$$
implies that all $c_{ij}$ are zero and hence that each $u_i$ is
zero.

To show that the elements of the form
$\ad(\sigma_1)\ad(\sigma_2)\cdots \ad(\sigma_n)(\rho_j)$ are part
of a Lie algebra basis of $\mathfrak a$ it suffices to show that
$\rho_1,\ldots,\rho_m$ are $B$-independent modulo $\mathfrak a^*$.
We now work modulo $\mathfrak a^*$. If $H$ is the $\F_2$-span of
$\rho_1,\ldots,\rho_m$, we can find a basis
$\gamma_1,\ldots,\gamma_m$ of $H$ such that
$$
\gamma_i=a_i\alpha_i+\sum_{j=1}^sa_{ij}\beta_j
$$
where $a_i,a_{ij}\in \F_2$, $a_i\ne0$, $m+s=t$,
$T=\{\alpha_1,\ldots,\alpha_m,\beta_1,\ldots,\beta_s\}$. If
$u_1,\ldots,u_m\in B$, we have
$$
\sum_{i=1}^m u_i\gamma_i=
\sum_{i=1}^ma_iu_i\alpha_i+\sum_{j=1}^s(\sum_{i=1}^ma_{ij}u_i)\beta_j.
$$
If\ $\sum_{i=1}^m u_i\gamma_i=0 \mod \mathfrak a^*$ then by the
$B$-independence of the elements of $T$ we $a_iu_i=0$ so that
$u_i=0$ for all $i$ which implies the $B$-independence of
$\gamma_1,\ldots,\gamma_m$ and hence of $\rho_1,\ldots,\rho_m$.
\end{proof}
\medskip

\begin{corollary}\label{sf}
Let $X=\{\xi_1,\ldots,\xi_d\}$ with $d\ge 4$ even and let
$\rho_1,\ldots,\rho_d\in \tilde L_{\rm mix}(X)$ with
$$
\rho_i=a_i\xi_i^2+\sum_{j=1}^d \ell_{ij}[\xi_i,\xi_j],
$$
where {\rm (a)} $a_i=0$ if $i$ is odd, {\rm (b)} $\ell_{ij}=0$ if
$i,j$ odd, {\rm (c)}
$\ell_{12}=\ell_{23}=\ldots=\ell_{d-1,d}=\ell_{d1}=1$ and {\rm
(d)} $\ell_{1d}\ell_{d,d-1}\cdots\ell_{32}\ell_{21}=0$. Then the
sequence $\rho_1,\ldots,\rho_d$ is strongly free.
\end{corollary}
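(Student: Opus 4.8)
The plan is to apply Theorem~\ref{SF} with a judicious choice of $S\subset X$. Take $S=\{\xi_i: i\text{ odd}\}$, so that $X-S=\{\xi_i: i\text{ even}\}$, let $\mathfrak a$ be the ideal of $\tilde L=\tilde L_{\rm mix}(X)$ generated by $X-S$, and let $B$ be the enveloping algebra of $\tilde L_{\rm mix}(S)=\tilde L/\mathfrak a$. (All $\xi_i$ have weight $1$, so each $\rho_i$ is homogeneous of degree $2$.) First I would check that every $\rho_i$ lies in $\mathfrak a$. For $i$ odd, condition (a) kills the square term and condition (b) forces every surviving bracket $[\xi_i,\xi_j]$ to have $j$ even, whence $[\xi_i,\xi_j]=\ad(\xi_i)(\xi_j)\in\mathfrak a$. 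For $i$ even, $\xi_i\in X-S$, so $\xi_i^2=P(\xi_i)\in\mathfrak a$ (an ideal of a quadratic Lie algebra is $P$-closed in degree $1$) and each $[\xi_i,\xi_j]\in\mathfrak a$ because $\xi_i\in\mathfrak a$.

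Next I reduce modulo $\mathfrak a^*=P(\mathfrak a_1)+[\mathfrak a,\mathfrak a]$. By the Elimination Theorem the degree-$2$ free generators of $\mathfrak a$ are the brackets $[\xi_a,\xi_b]$ with $a\in S$ (odd) and $b\in X-S$ (even), and these survive in $\mathfrak a_2/\mathfrak a_2^*$; on the other hand $\xi_b^2\in P(\mathfrak a_1)$ and any $[\xi_b,\xi_{b'}]$ with $b,b'$ both even lies in $[\mathfrak a,\mathfrak a]$, so both vanish modulo $\mathfrak a^*$. Using $[\xi_i,\xi_j]=[\xi_j,\xi_i]$ over $\F_2$, I obtain $\rho_i\equiv\sum_{j\text{ even}}\ell_{ij}[\xi_i,\xi_j]$ for $i$ odd and $\rho_i\equiv\sum_{j\text{ odd}}\ell_{ij}[\xi_j,\xi_i]$ for $i$ even. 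Thus all $\rho_i$ lie, modulo $\mathfrak a^*$, in the $\F_2$-span of $T:=\{[\xi_a,\xi_b]: a\text{ odd},\ b\text{ even}\}$, whose elements, being part of a free basis of $\mathfrak a$, are $B$-independent modulo $\mathfrak a^*$ (equivalently their images $\xi_a\cdot\xi_b$ freely generate the free $B$-module $\mathfrak a/[\mathfrak a,\mathfrak a]$, as in the corollary to the Elimination Theorem).

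The crux is to verify that $\rho_1,\ldots,\rho_d$ are $\F_2$-linearly independent modulo $\mathfrak a^*$; this is where conditions (c) and (d) enter. Suppose $\sum_i c_i\rho_i\equiv 0$. For each $k$ I read off the coefficient of the free generator attached to the forward edge $\{k,k+1\}$ (indices mod $d$): since exactly one of $k,k+1$ is odd, this coefficient equals $c_k\ell_{k,k+1}+c_{k+1}\ell_{k+1,k}$, and condition (c), namely $\ell_{k,k+1}=1$, turns its vanishing into the recursion $c_k=\ell_{k+1,k}\,c_{k+1}$ for $k=1,\ldots,d$ cyclically. Writing $r_k=\ell_{k+1,k}$ (with $r_d=\ell_{1d}$ and $c_{d+1}=c_1$), condition (d) says precisely $r_1r_2\cdots r_d=0$, so over the field $\F_2$ some $r_{k_0}=0$; then $c_{k_0}=r_{k_0}c_{k_0+1}=0$, and feeding this back through $c_{k-1}=r_{k-1}c_k$ propagates $0$ to every index, giving $c_1=\cdots=c_d=0$. (For instance when $d=4$ this is just the statement that the cyclic bidiagonal coefficient matrix has determinant $1+\ell_{21}\ell_{32}\ell_{43}\ell_{14}$, which equals $1$ by (d).)

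With the three hypotheses of Theorem~\ref{SF} in hand, $\rho_1,\ldots,\rho_d$ is strongly free in $\tilde L$, and hence strongly free. The main obstacle is organizing this last linear-algebra computation so that (c) produces the cyclic recursion and (d) collapses it; the verification that $\rho_i\in\mathfrak a$, the reduction modulo $\mathfrak a^*$, and the $B$-independence of $T$ are all bookkeeping consequences of the Elimination Theorem and the quadratic structure.
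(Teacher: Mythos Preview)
Your proof is correct and follows essentially the same route as the paper's: you take the same ideal $\mathfrak a$ (generated by the even-indexed $\xi_i$), verify $\rho_i\in\mathfrak a$, and apply Theorem~\ref{SF} by checking $\F_2$-linear independence of the $\rho_i$ modulo $\mathfrak a^*$ against the basis $T=\{[\xi_a,\xi_b]:a\text{ odd},\,b\text{ even}\}$. The only cosmetic difference is in the final linear-algebra step: the paper extracts the $d\times d$ cyclic bidiagonal submatrix on the columns $(1,2),(2,3),\ldots,(d,1)$ and computes its determinant as $\ell_{12}\cdots\ell_{d1}+\ell_{1d}\cdots\ell_{21}=1$, whereas you read off the same $d$ equations as a cyclic recursion $c_k=\ell_{k+1,k}c_{k+1}$ and use condition~(d) to collapse it---these are the same computation in different dress.
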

\begin{proof}
Let $\mathfrak a$ be the ideal of ${\tilde L}_{\rm mix}(X)$
generated by the $\xi_i$ with $i$ even and let $\mathfrak b$ be
the subspace of $\mathfrak a_2$ generated by the $\xi_i^2$,
$[\xi_i,\xi_j]$ with $i,j$ even. Then the $\rho_i$ are in
$\mathfrak a$ and their images in $V=(\mathfrak a/\mathfrak
a^*)_2=\mathfrak a_2/\mathfrak b$ are linearly independent.
Indeed, the images in $V$ of the elements $[\xi_i,\xi_j]$ with $i$
odd, $j$ even $i<j$ form a basis for $V$ which we order
lexicographically. If $A$ is the matrix representation of
$\rho_1,\ldots,\rho_d$ with respect to this basis, the $d$ columns
$(1,2),(2,3),(3,4),\ldots,(1,d)$ of $A$ form the matrix
$$
\begin{bmatrix} \ell_{12}&0&0&\cdots&0&-\ell_{1m}\\
                \ell_{21}&\ell_{23}&0&\cdots&0&0\\
                0&\ell_{32}&\ell_{34}&\cdots&0&0\\
                0&0&\ell_{43}&\cdots&0&0\\
                \vdots&\vdots&\vdots&&\vdots&\vdots\\
                0&0&0&\cdots&\ell_{m,m-1}&0\\
                0&0&0&\cdots&\ell_{m,m-1}&\ell_{m1}\end{bmatrix}
$$
which has determinant
$\ell_{12}\ell_{23}\cdots\ell_{m-1,m}\ell_{m1}+\ell_{1m}\ell_{21}\ell_{32}\cdots\ell_{m,m-1}=1$.
\end{proof}
\medskip

\noindent{\bf Example 4.9.} If $d\ge4$ is even then
$$
a_1\xi_1^2+[\xi_1,\xi_2],a_2\xi_2^2+[\xi_2,\xi_3],\ldots,a_d\xi_d^2+[\xi_d,\xi_1]
$$
is a strongly free sequence if $a_i=0$ for $i$ odd.
\medskip

\section{Mild Groups}
Let $F=F(x_1,\ldots,x_d)$ be the free pro-$2$-group on
$x_1,\ldots,x_d$ and let $G=F/R$ with $R$ the closed normal
subgroup of $F$ generated by $r_1,\ldots,r_m$. Let $(F_n)$ be the
filtration of $F$ induced by the $(x,\tau)$-filtration of $F$. It
is induced by the $(x,\tau)$-filtration of $\Lambda=\Z_2[[F]]$.
Let $G_n$ be the image of $F_n$ in $G$ and let $\Gamma_n$ be the
image of $\Lambda_n$ in $\Gamma=\Z_2[[G]]$.

Let $\rho_i$ be the initial form of $r_i$ with respect to the
$(x,\tau)$-filtration of $F$ ; by definition, if $r\in F_k,\
r\notin F_{k+1}$, the initial form of $r$ is the image of $r$ in
$L_k(F)=\gr_k(F)$. We assume that the degree $h_i$ of $\rho_i$ is
$>1$.
\medskip

\begin{definition}[(Strongly Free Presentation)]
The presentation $G=F/R$ is strongly free if
$\rho_1,\ldots,\rho_m$ is strongly free in $L_{\rm mix}(F)$.
\end{definition}
\medskip

\begin{definition}[(Mild Group)]
A pro-$2$-group $G$ is said to be {\bf weakly mild} if it has a
minimal presentation $G=F/R$ of finite type which is strongly free
with respect to some $(x,\tau)$-filtration of $F$. It is called
{\bf mild} if the $\tau_i=1$ for all $i$ in which case the
$(x,\tau)$-filtration is the lower $2$-central series of $F$.
\end{definition}
\medskip

\begin{theorem}\label{Cent}
Let $F/R$ be a strongly free presentation of $G$ with
$R=(r_1,\ldots,r_m)$. Let $\mathfrak r$ is the ideal of $L(F(X))$
generated by the initial forms $\rho_1,\ldots,\rho_m$ of the
defining relators $r_1,\ldots,r_m$. Then
\begin{enumerate}[\rm (a)]
\item $L(G)=L(F)/\mathfrak r$.
\item The group $R/[R,R]$ is a free $\Z_2[[G]]$-module on the images of
$r_1,\ldots,r_m$.
\item The presentation $G=F/R$ is minimal and $\cd(G)=2$.
\item The enveloping algebra of $L(G)$ is the graded algebra
associated to the filtration $(\Gamma_n)$ of $\Gamma=\Z_2[[G]]$,
where $\Gamma_n$ is the image of $\Lambda_n$ in $G$.
\item The filtration $(G_n)$ of $G$ is induced by the filtration
$(\Gamma_n)$ of $\Gamma$.
\item The Poincar\'e series of $\gr(\Gamma)$ is
$1/(1-t)(1-(t^{\tau_1}+\cdots+t^{\tau_d})+t^{h_1}+\ldots+t^{h_m}))$.
\item If $b_n=\dim {\tilde L}_n$ then
the Poincar\'e series of $\gr(\Gamma)/\pi\,\gr(\Gamma)$ is equal
to
$$
(1+t)^r\prod_{n\ge2}(1-t^n)^{-b_n},
$$
where $r=b_1$ is the number of $i$ with $\tau_i=1$.
\item If $b_n$, $r$ are as in (g) and
$$
1-(t^{\tau_1}+\cdots+t^{\tau_d})+t^{h_1}+\ldots+t^{h_m})=(1-\alpha_1t)\cdots(1-\alpha_st)
$$
then $a_n=\sum_{k=2}^nb_k$ with
$$
b_n=\frac{1}{n}\sum_{\ell|n}\mu(\frac{n}{\ell})(\alpha_1^\ell+\cdots+\alpha_s^\ell+(-1)^\ell
r).
$$
\end{enumerate}
\end{theorem}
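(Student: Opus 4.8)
The plan is to reduce the entire theorem to a single central identification: that the graded algebra $\gr(\Gamma)$ coincides with $U=U_{\rm mix}(\mathfrak g)=A(X)/\mathfrak R$, where $\mathfrak g=L(F)/\mathfrak r$ and $\mathfrak R$ is the ideal of $\gr(\Lambda)=A(X)$ generated by the initial forms $\rho_1,\ldots,\rho_m$. First I would record the automatic surjection: since $\rho_i$ is the initial form of $r_i-1$, it dies in $\gr(\Gamma)$, so $\mathfrak R\subseteq\mathrm{in}(J)$ with $J=\Ker(\Lambda\to\Gamma)$, and there is a canonical surjection $U=A(X)/\mathfrak R\twoheadrightarrow\gr(\Gamma)=A(X)/\mathrm{in}(J)$. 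This already gives the coefficientwise bound $\gr(\Gamma)(t)\le Q(t)$, where $Q(t)=1/[(1-t)(1-(t^{\tau_1}+\cdots+t^{\tau_d})+t^{h_1}+\cdots+t^{h_m})]$ is the Poincar\'e series of $U$ computed from strong freeness in the proof of Theorem~\ref{sfred}. Establishing the matching lower bound is the crux; once it is in hand, parts (f) and (d) are immediate and the rest follows formally.

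For the lower bound I would use the Fox--Lyndon relation sequence of $\Gamma$-modules attached to the presentation,
$$
\Gamma^m\xrightarrow{\partial_2}\Gamma^d\xrightarrow{\partial_1}\Gamma\xrightarrow{\epsilon}\Z_2\to0,
$$
with $\partial_1 e_i=x_i-1$ and $\partial_2$ given by the Fox derivatives of the $r_j$, filtering each free module by placing its generators in degrees $\tau_i$ and $h_j$. Passing to associated graded gives a complex of free $\gr(\Gamma)$-modules whose differentials carry the initial forms of the Fox derivatives as entries, and this is to be compared with the algebraic resolution $0\to M\to I_A/\mathfrak R I_A\to U\to\F_2[\pi]\to0$ coming from the freeness of $M=\mathfrak r/[\mathfrak r,\mathfrak r]$ over $U$ (again from Theorem~\ref{sfred}). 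The hard part will be to show the comparison is strict, equivalently that the $r_i-1$ form a standard (Gr\"obner-type) basis of $J$ so that $\mathrm{in}(J)=\mathfrak R$ and no unexpected initial forms surface in higher degree; here the torsion-freeness of $U$ over $\F_2[\pi]$ and the freeness of $M$ are exactly what preclude the collapse of leading terms through syzygies. Taking Euler characteristics of the resulting exact graded resolution returns $\gr(\Gamma)(t)\,\bigl(1-(t^{\tau_1}+\cdots+t^{\tau_d})+t^{h_1}+\cdots+t^{h_m}\bigr)=1/(1-t)$, i.e. $\gr(\Gamma)(t)=Q(t)$, forcing the surjection above to be an isomorphism. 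This proves (f), and since $U$ is the mixed enveloping algebra of $\mathfrak g$ it proves (d).

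With the identification $\gr(\Gamma)=U_{\rm mix}(\mathfrak g)$ secured, I would read off the integral statements. Part (a) follows because $L(G)=\gr(G)$ embeds in $\gr(\Gamma)=U$ by the Birkhoff--Witt theorem for mixed Lie algebras, and the mixed Lie subalgebra generated by the $\xi_i$ is exactly $\mathfrak g=L(F)/\mathfrak r$, so the surjection $L(F)/\mathfrak r\to L(G)$ is an isomorphism. Part (b) is the integral lift of the freeness of $M$: the now-exact graded relation sequence shows $\partial_2$ is injective with free image, whence $R/[R,R]$ is $\Z_2[[G]]$-free on the images of $r_1,\ldots,r_m$. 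Part (c) then follows formally, the length-two free resolution giving $\cd(G)\le2$, and since $H^2(G)=\F_2^m\ne0$ the presentation is minimal with $\cd(G)=2$. For (e), the graded isomorphism shows the quotient filtration is strict, so $G_n=G\cap(1+\Gamma_n)$, exactly as in the free case.

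Finally I would treat the numerical parts. For (g), reduce modulo $\pi$: by the Proposition identifying $U/\pi U$ with $U_{\rm quad}(\tilde{\mathfrak g})$ we get $\gr(\Gamma)/\pi\,\gr(\Gamma)=U_{\rm quad}(\tilde{\mathfrak g})$, and the mixed PBW basis (as in the proof of Theorem~\ref{L+} and Corollary~\ref{Lq}) expresses its Poincar\'e series as $(1+t)^r\prod_{n\ge2}(1-t^n)^{-b_n}$, the $r=b_1$ degree-one basis elements contributing exponents $0,1$ via $s^2=P(s)$ and the higher $b_n$ acting as ordinary enveloping-algebra generators. For (h), equate this with $(1-t)Q(t)=1/\prod_k(1-\alpha_k t)$ from (f), take logarithms, and apply M\"obius inversion (the Witt/necklace computation) to obtain
$$
b_n=\frac1n\sum_{\ell\mid n}\mu(n/\ell)(\alpha_1^\ell+\cdots+\alpha_s^\ell+(-1)^\ell r);
$$
the relation $a_n=\sum_{k=2}^n b_k$ for $a_n=\dim L_n(G)$ then follows by telescoping, using that $\pi$ acts injectively on $L^+(G)$ so that $\dim L_n(G)-\dim L_{n-1}(G)=b_n$.
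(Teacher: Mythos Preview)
Your treatment of (a)--(f) and (h) matches the paper's: for (a)--(f) the paper simply defers to the proof of Theorem~4.1 in~\cite{La}, noting only the extra wrinkle that the $\F_2[\pi]$-freeness of $\mathfrak r$ must now be extracted from the fact that $\mathfrak r$ sits inside the free $\F_2[\pi]$-Lie algebra $L^+$ with torsion-free quotient; your Fox--Lyndon/standard-basis sketch is a faithful description of that argument, and your handling of (h) via logarithms and M\"obius inversion is exactly what the paper does.

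The gap is in (g). You invoke ``the mixed PBW basis (as in the proof of Theorem~\ref{L+} and Corollary~\ref{Lq})'' to obtain the factorization $(1+t)^r\prod_{n\ge2}(1-t^n)^{-b_n}$, but those results establish the decomposition $A=\bigoplus_{e}\xi^eB$ only for the \emph{free} mixed Lie algebra $L_{\rm mix}(X)$; nothing in the paper supplies a monomial PBW basis for $U_{\rm quad}$ of an arbitrary quadratic Lie algebra, and the Birkhoff--Witt theorem for mixed Lie algebras quoted from Lazard asserts only injectivity of $\mathfrak g\hookrightarrow U_{\rm mix}(\mathfrak g)$, not a basis. What must be shown---and what the paper spends its proof of (g) doing---is that the free-module splitting descends to the quotient. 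Concretely, one checks that the two-sided ideal $\mathfrak r_A$ of $A$ generated by $\mathfrak r$ satisfies $\mathfrak r_A=\sum_e\xi^e\mathfrak r_B$, so that $U=A/\mathfrak r_A$ remains free over $V=B/\mathfrak r_B$ on the same basis $\xi_1^{e_1}\cdots\xi_s^{e_s}$, and then reduces modulo $\pi$. The paper further uses strong freeness of the $\tilde\rho_i$ to verify that $P(\xi)\notin\tilde{\mathfrak r}$ for every nonzero $\xi\in\tilde L_1$ (via the freeness of $\tilde{\mathfrak r}/[\tilde{\mathfrak r},\tilde{\mathfrak r}]$ and the fact that $\bar V$ is a domain), so that multiplication by each $\xi$ is injective on $\bar V$ and the summands $\xi^e\bar V$ contribute the correct Poincar\'e series $t^{|e|}P_{\bar V}(t)$. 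Your one-line appeal to the free case skips precisely these verifications, and without them the linear independence of the putative basis monomials in $\bar U=U_{\rm quad}(\tilde{\mathfrak g})$ is exactly the point in question.
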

\medskip

Except for (g) and (h), the proof this theorem is the same as the
proof of Theorem 4.1 in \cite{La} except that the freeness of the
Lie algebra $\mathfrak r$ over $\F_2[\pi]$ is deduced from the
fact that $\mathfrak r$ is an ideal of the free Lie algebra
$L_{\rm mix}(X)^+$ and that $L_{\rm mix}(X)^+/\mathfrak r$ a
torsion free $\F_2[\pi]$-module.

To prove (g) and (h) let $A$ be the enveloping algebra of the
mixed Lie algebra $L=L_{\rm mix}(F(X))$ and let $B$ be the
enveloping algebra of the $\F_2[\pi]$-Lie algebra $L^+$. Then $L$
is the free mixed Lie algebra on $\xi_1,\ldots,\xi_d$, where
$\xi_i$ is the image of $x_i$ in $\gr_{\tau_i}(F)$. By
Theorem~\ref{L+}, $L^+$ is a free Lie algebra over $\F_2[\pi]$ and
the canonical map of $B$ into $A$ is injective. Moreover, assuming
that $\xi_1,\ldots,\xi_s$ the $\xi_i$ of degree $1$, then $A$ is a
free $B$-module with basis $\xi_1^{e_1}\cdots\xi_s^{e_s}$
($e_i=0,1$). If ${\mathfrak r}_B$ be the ideal of $B$ generated by
$\mathfrak r$ then
$$
{\mathfrak
r}_A=\sum_{e_i=0,1}\xi_1^{e_1}\cdots\xi_s^{e_s}{\mathfrak r}_B
$$
is the ideal of $A$ generated by $\mathfrak r$. It follows that
the canonical map of $B/{\mathfrak r}_B$ into $A/{\mathfrak r}_A$
is injective and that $A/{\mathfrak r}_A$ is a free $B/{\mathfrak
r}_B$-module with basis $\xi_1^{e_1}\cdots\xi_s^{e_s}$
($e_i=0,1$). The algebra $U=A/{\mathfrak r}_A$ is the enveloping
algebra of the mixed Lie algebra $\mathfrak g=L/\mathfrak r$ and
$V=B/{\mathfrak r}_B$, the enveloping algebra of the Lie algebra
$L^+/\mathfrak r$ over $\F_2[\pi]$. If $\bar U=U/\pi U$ and ${\bar
V}={\bar V}/\pi {\bar V}$ we obtain that the canonical map of
${\bar V}$ into $\bar U$ is injective and that $\bar U$ is a free
${\bar V}$-module with basis $\xi_1^{e_1}\cdots\xi_s^{e_s}$
($e_i=0,1$). The algebra $\bar U$ is the enveloping algebra of the
quadratic Lie algebra $\tilde{\mathfrak g}$ and ${\bar V}$ is the
enveloping algebra of the Lie algebra $\tilde{\mathfrak g}^+$ over
$\F_2$.

We now use the fact that $\tilde L/\mathfrak r$, where
$\tilde{\mathfrak r}$ is the image of $\mathfrak r$ in $\tilde L$,
is a strongly free presentation to deduce that $P(\xi)\notin
\tilde{\mathfrak r}$ for every non-zero element $\xi$ of $\tilde
L$ of degree~$1$. Indeed, if $P(\xi)$ lies in $\tilde{\mathfrak
r}$ then, if $\bar\xi$ is the image of $P(\xi)$ in
$\tilde{\mathfrak r}/[\tilde{\mathfrak r},\tilde{\mathfrak r}]$
and $\tilde\xi$ the image of $\xi$ in $\tilde{\mathfrak g}$, we
would have $\bar\xi, \tilde\xi\ne0$
$$
ad(\tilde\xi)(\bar\xi)=0
$$
which contradicts the fact that $\tilde{\mathfrak
r}/[\tilde{\mathfrak r},\tilde{\mathfrak r}]$ is a free $\bar V
$-module via the adjoint representation and the fact that $\bar V$
is an integral domain. Thus multiplication by $P(\xi)=\xi^2$ maps
$\bar V$ injectively in to $\bar V$ which implies that
multiplication by $\xi$ is injective on $\bar V$. This in turn
implies that
$$
P_{\xi\bar V}(t)=tP_{\bar V}(t).
$$
We thus obtain that $P_{\bar U}(t)=(1+t)^sP_{\bar V}(t)$.  This
implies (g) since
$$
P_{\bar V}(t)=\prod_{n\ge2}(1-t^n)^{-b_n}
$$
and $U_{mix}(\gr(G))=U$. The assertion (h) follows form the fact
that $\gr(G)^+$ is a free $\F_2[\pi]$-module and a standard
argument to compute $b_n$ using the formula
$$
(1+t)^r\prod_{n\ge2}(1-t^n)^{-b_n}=\frac{1}{(1-\alpha_1t)\cdots(1-\alpha_st)}
$$

\medskip

\section{Zassenhaus Filtrations}

Theorem~\ref{Cent} can be extended under certain conditions to
filtrations induced by valuations of the completed group ring
$\F_2[[F]]$. The Lie algebras associated to these filtrations are
restricted Lie algebras in the sense of Jacobson~\cite{Ja}. A
sufficient condition is that the initial forms of the relators lie
in a Lie subalgebra over $\F_2$ which is quadratic and that these
initial forms are strongly free. This will give a second proof
that the pro-$2$-group with these relators is of cohomological
dimension $2$.

Let $F$ be the free pro-$2$-group on $x_1,\ldots,x_d$. The
completed group algebra $\bar{\Lambda}=\F_2[[F]]$ over the finite
field $\F_2$ is isomorphic to the algebra of formal power series
in the non-commuting indeterminates $X_1,\ldots,X_d$ over $\F_2$.
Identifying $F$ with its image in $\bar{A}$, we have $x_i=1+X_i$.

If $\tau_1,\ldots,\tau_d$ are integers $>0$, we define a valuation
$\bar{w}$ of $\bar{\Lambda}$ by setting
$$
\bar{w}(\sum_{i_1,\ldots,i_k}a_{i_1,\ldots,i_k}X_{i_1}\cdots
X_{i_k})=\inf_{i_1,\ldots,i_k}(\tau_{i_1}+\cdots+\tau_{i_k}).
$$
Let
$$
\bar{\Lambda}_n=\{u\in \bar{\Lambda}\mid \bar{w}(u)\ge n\},\
\gr_n(\bar{\Lambda})=\bar{\Lambda}_n/\bar{\Lambda}_{n+1},\
\gr(\bar{\Lambda})=\oplus_{n\ge0}\gr_n(\bar{\Lambda}).
$$
Then $\gr(\bar{\Lambda})$ is a graded $\F_2$-algebra. If $\xi_i$
is the image of $X_i$ in $\gr_{\tau_i}(\bar{\Lambda})$ then
$\gr(\bar{\Lambda})$ is the free associative $\F_2$-algebra $\bar
A$ on $\xi_1,\ldots,\xi_d$ with a grading in which $\xi_i$ is of
degree $\tau_i$ . Note that when $\tau_i=1$ for all $i$ we have
$\bar{\Lambda}_n=\bar{I}^n$, where $\bar{I}$ is the augmentation
ideal $(X_1,\ldots,X_d)$ of $\bar{\Lambda}$.

The Lie subalgebra $\bar{L}$ of $\bar A$ generated by the $\xi_i$
is the free Lie algebra over $\F_2$ on $\xi_1,\ldots,\xi_m$ by the
Birkhoff-Witt Theorem. The Lie subalgebra $\tilde L$ generated by
$\xi_1,\ldots,\xi_d$ and the $\xi_i^2$ where $\xi_i$ is of degree
$1$ is the free quadratic Lie algebra on $\xi_1,\ldots,\xi_d$.

A decreasing sequence $(G_n)$ of closed subgroups of a
pro-$2$-group G which satisfies
$$
[G_i,G_j]\subseteq G_{i+j}, \quad G_i^2\subseteq G_{2i}.
$$
is called a called, after Lazard~\cite{Laz}, a $2$-restricted
filtration of $G$.

 For $n\ge1$, let $F_n=(1+\bar{\Lambda}_n)\cap F$. Then
$(F_n)$ is a $2$-restricted filtration of $F$. This filtration is
also called the Zassenhaus $(x,\tau)$-fitration of $F$. The
mapping $x\mapsto x^2$ induces an operator $P$ on $\gr(F)$ sending
$\gr_n(F)$ into $\gr_{2n}(F)$. With this operator, $\gr(F)$ is a
restricted Lie algebra over $\F_2$. If $\tau_i=1$ for all $i$, the
subgroups $F_n$ are the so-called dimension subgroups mod $2$.
They can be defined by
$$
F_n=\langle[y_1,[\cdots[y_{r-1},y_r]\cdots]]^{2^s}\mid
y_1,\ldots,y_r\in F,\ r2^s\ge n\rangle.
$$
\medskip

Let $r_1,\ldots,r_m\in F^2[F,F]$ and let $R=(r_1,\ldots,r_m)$ be
the closed normal subgroup of $F$ generated by $r_1,\ldots,r_m$.
Let $\rho_i\in\gr_{h_i}(F)$ be the initial form of $r_i$ with
respect to the Zassenhaus $(x,\tau)$-filtration $(F_n)$ of $F$. If
$G=F/R$ and $G_n$ is the image of $F_n$ in $G=F/R$ then
$(G_n)_{n\ge1}$ is a $2$-restricted filtration of $G$. Let
$\bar{\Gamma}_n$ be the image of $\bar{\Lambda}_n$ in
$\bar{\Gamma}=\F_2[[G]]$.
\medskip

\begin{theorem}\label{Zas}
Suppose that the initial forms $\rho_1,\ldots,\rho_m$ of
$r_1,\ldots,r_m$ are in $\tilde L$ and are strongly free. Then
\begin{enumerate}[{\rm (a)}]
\item We have $\gr(G)=\gr(F)/(\rho_1,\ldots,\rho_m)$,
\item The group $R/R^2[R,R]$ is a free $\F_2[[G]]$-module on the images of
$r_1,\ldots,r_m$,
\item The presentation $G=F/R$ is minimal and $\cd(G)=2$.
\item The enveloping algebra of $\gr(G)$ is the graded algebra
associated to the filtration $(\bar{\Gamma}_n)$.
\item The filtration $(\bar{\Gamma}_n)$ of $\bar{\Gamma}$ induces the filtration
$(G_n)$ of $G$.
\item The Poincar\'e series of $\gr(\bar{\Gamma})$ is
$1/(1-(t^{\tau_1}+\cdots+t^{\tau_d})+t^{h_1}+\ldots+t^{h_m})$.
\item If $\tau_i=1$ for all $i$ and $a_n=\dim \gr_n(G)$ then
$$
\prod_{n\ge1}(1+t^n)^{a_n}=\frac{1}{1-dt+mt^2}.
$$
\end{enumerate}
\end{theorem}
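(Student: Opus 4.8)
The plan is to reduce all of (a)--(f) to the single identification $\gr(\bar\Gamma)=\tilde U$, where $\tilde U=U_{\rm quad}(\tilde L/\tilde{\mathfrak r})$ is the enveloping algebra of the quadratic Lie algebra cut out by the initial forms, and then to obtain (g) by a separate count. Recall that $\gr(\bar\Lambda)=\bar A$ is the free associative $\F_2$-algebra on $\xi_1,\dots,\xi_d$, that $\gr(F)\subseteq\bar A$ is the free restricted Lie algebra with restricted enveloping algebra $\bar A$, and that the associated graded of $\bar\Gamma=\F_2[[G]]$ for the Zassenhaus filtration is the restricted enveloping algebra of $\gr(G)$ (cf. \cite{Laz}). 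Since $\bar\Lambda\to\bar\Gamma$ carries $\bar\Lambda_n$ onto $\bar\Gamma_n$, it induces a surjection $\bar A\to\gr(\bar\Gamma)$ whose kernel contains the initial forms $\rho_1,\dots,\rho_m$ of $r_1-1,\dots,r_m-1$, hence contains the two-sided ideal $\mathfrak R=(\rho_1,\dots,\rho_m)$. We thus obtain a surjection $\tilde U=\bar A/\mathfrak R\twoheadrightarrow\gr(\bar\Gamma)$. By hypothesis the sequence $\rho_1,\dots,\rho_m$ is strongly free in $\tilde L$, so Theorem~\ref{P} gives $\tilde U(t)=P(t)$, where $P(t)=1/(1-(t^{\tau_1}+\cdots+t^{\tau_d})+t^{h_1}+\cdots+t^{h_m})$, and therefore $\gr(\bar\Gamma)(t)\preceq P(t)$ coefficientwise.

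The heart of the proof is the reverse inequality. For this I would invoke the Golod--Shafarevich inequality for the complete filtered algebra $\bar\Gamma=\bar\Lambda/\mathfrak a$: as $\mathfrak a$ is the closed two-sided ideal generated by the $m$ elements $r_j-1$, of filtration degrees $h_j$, one has $\gr(\bar\Gamma)(t)\succeq P(t)$ coefficientwise (cf. \cite{GoSha}), the bound being legitimate for all $n$ because $P(t)=\tilde U(t)$ has non-negative coefficients. Comparison with the upper bound of the first paragraph forces $\gr(\bar\Gamma)(t)=P(t)$, so the surjection $\tilde U\to\gr(\bar\Gamma)$ is a degreewise isomorphism of finite-dimensional $\F_2$-spaces; this is (d) and (f). A second route, closer to the proof of Theorem~\ref{P} and yielding (b) at the same stroke, is to lift the free resolution $0\to\oplus_{j}\tilde U[h_j]\to\oplus_{i}\tilde U[\tau_i]\to\tilde U\to\F_2\to0$ (whose exactness is exactly strong freeness, with $\tilde M=\tilde{\mathfrak r}/[\tilde{\mathfrak r},\tilde{\mathfrak r}]$ free on the $\rho_j$) to the Fox--Magnus complex of the presentation and to match Poincaré series.

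Granting $\gr(\bar\Gamma)=\tilde U=\bar A/\mathfrak R$, the rest follows as in Theorem~\ref{Cent}. For (a), the canonical surjection $\gr(F)/(\rho_1,\dots,\rho_m)\to\gr(G)$ of restricted Lie algebras induces on restricted enveloping algebras the isomorphism just established; since a restricted Lie algebra embeds in its enveloping algebra, this surjection is itself an isomorphism. For (e), the inclusion $\gr(G)\hookrightarrow\gr(\bar\Gamma)$ now has the correct dimension in each degree, so the filtration of $G$ by the images of the $F_n$ agrees with the one induced by $(\bar\Gamma_n)$. For (b), the induced filtration makes $R/R^2[R,R]$ a complete filtered $\bar\Gamma$-module with associated graded $\tilde M$, which is $\tilde U$-free on the classes of $\rho_1,\dots,\rho_m$; a complete filtered module with free associated graded is free on any lift of such a basis, here the images of $r_1,\dots,r_m$. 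For (c), freeness in (b) makes the complex $0\to\bar\Gamma^m\to\bar\Gamma^d\to\bar\Gamma\to\F_2\to0$ a resolution, so $\cd(G)\le2$; as the $r_j$ lie in $F^2[F,F]$ no generator is redundant and the relators are independent in $H^2(G)$, so the presentation is minimal and $\cd(G)=2$.

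For (g), take $\tau_i=1$ for all $i$ with the relators quadratic, $h_j=2$ (the case relevant to mild groups), so that (f) reads $\gr(\bar\Gamma)(t)=1/(1-dt+mt^2)$. On the other hand $\gr(\bar\Gamma)$ is the restricted enveloping algebra of $\gr(G)$, and by the Poincaré--Birkhoff--Witt theorem for restricted Lie algebras over $\F_2$ each homogeneous basis element contributes an exponent $0$ or $1$, whence $\gr(\bar\Gamma)(t)=\prod_{n\ge1}(1+t^n)^{a_n}$ with $a_n=\dim\gr_n(G)$. Equating the two expressions gives the asserted identity $\prod_{n\ge1}(1+t^n)^{a_n}=1/(1-dt+mt^2)$. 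The one genuinely hard step is the lower bound of the second paragraph: manufacturing enough elements of $\gr_n(\bar\Gamma)$ to meet $\dim\tilde U_n$, equivalently showing that passage to initial forms destroys no relation among the $r_j$. This is precisely where strong freeness of $\rho_1,\dots,\rho_m$ is indispensable; once the isomorphism $\gr(\bar\Gamma)\cong\tilde U$ is secured, everything else is bookkeeping.
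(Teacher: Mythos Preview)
Your overall strategy---squeeze $\gr(\bar\Gamma)$ between the surjection from $\tilde U$ and a Golod--Shafarevich lower bound---is a legitimate alternative to the paper's route, which instead invokes Koch's result \cite{Ko3} directly: Koch shows that if $\bar{\mathcal R}/\bar{\mathcal R}\bar I$ is free over $\bar A/\bar{\mathcal R}$ on the $\rho_j$ (which follows from strong freeness via the injection $\tilde{\mathfrak r}/[\tilde{\mathfrak r},\tilde{\mathfrak r}]\hookrightarrow\bar I/\bar{\mathcal R}\bar I$), then $\gr(\bar\Gamma)=\bar A/\bar{\mathcal R}$ outright, with no separate lower bound needed. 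From there the paper lifts the graded exact sequence to $0\to\bar\Gamma^m\to\bar\Gamma^d\to\bar\Gamma\to\F_2\to0$ via Serre's lemma (\cite{Laz}, V.2.1) and reads off (b),(c) from Brumer \cite{Br}. Your ``second route'' is essentially this argument; the GS route is genuinely different and trades Koch's structural input for a Hilbert-series inequality.

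That said, there are gaps you should close. First, your opening sentence asserts that $\gr(\bar\Gamma)$ is the restricted enveloping algebra of $\gr(G)$ as a general fact from Lazard, but here $(G_n)$ is the \emph{image} filtration from $F$, not the intrinsic Zassenhaus filtration of $G$; that these agree is exactly (e), and that $\gr(\bar\Gamma)=U_{\rm res}(\gr(G))$ is exactly (d). The fix is to argue, after establishing $\tilde U\cong\gr(\bar\Gamma)$, with the chain $\tilde U=U_{\rm res}(\gr(F)/(\rho))\twoheadrightarrow U_{\rm res}(\gr(G))\to\gr(\bar\Gamma)$: the composite is your isomorphism and the first arrow is surjective, so both are isomorphisms; PBW then gives (a) and (d), and the resulting injectivity of $\gr(G)\to\gr'(G)$ yields (e) by the usual induction. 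Your sentence for (e) (``correct dimension in each degree'') does not say this. Second, the coefficientwise lower bound $\gr(\bar\Gamma)(t)\succeq P(t)$ with weighted generators is not in \cite{GoSha}; you need the Vinberg/Lazard refinement (e.g.\ \cite{Laz}, App.~A.3) or an ad hoc argument. Third, for (b) you assert that $\gr(R/R^2[R,R])\cong\tilde M$, which is not automatic; the paper avoids this by lifting the graded resolution via Serre's lemma and then citing \cite{Br}. Finally, in (g) you impose $h_j=2$; the theorem's formula indeed presumes this, but you should say so explicitly rather than silently, since the statement only hypothesizes $\tau_i=1$.
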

\begin{proof} In \cite{Ko3}, Koch proves that if $\bar{\mathcal
R}/\bar{\mathcal R}\bar I$ is a free $\bar A/\bar{\mathcal R}$
module on the images of $\rho_1,\ldots,\rho_m$ then
$\gr(\bar{\Gamma})=\bar A /\bar{\mathcal R}$, where $\bar{\mathcal
R}$ is the ideal of $\bar A=\gr(\bar{\Lambda})$ generated by
$\rho_1,\ldots,\rho_m$. The former is true if
$\rho_1,\ldots,\rho_m$ lie in $\tilde L$ and are strongly free
since $\bar{\mathcal R}/\bar{\mathcal R}\bar I$ is the image of
the free $\bar A/\bar{\mathcal R}$-module ${\tilde{\mathfrak
r}}/[\tilde{\mathfrak r},\tilde{\mathfrak r}]$ under the injective
mapping
$$
{\tilde{\mathfrak r}}/[\tilde{\mathfrak r},\tilde{\mathfrak
r}]\rightarrow\bar I/\bar{\mathcal R}\bar I
$$
 where $\tilde{\mathfrak r}$ is the ideal of the quadratic
Lie algebra $\tilde L$ generated by $\rho_1,\ldots,\rho_m$. Now
consider the exact sequence
$$
0\rightarrow \mathfrak r/[\mathfrak r,\mathfrak r]\rightarrow
\gr(\bar{\Gamma})^d\rightarrow\gr(\bar{\Gamma})\rightarrow\F_2\rightarrow0,
$$
 Since $\mathfrak r/[\mathfrak r,\mathfrak r]$ is a free
$\gr(\bar{\Gamma})$-module of rank $m$, we obtain the exact
sequence
$$
0\rightarrow \gr(\bar{\Gamma})^m\rightarrow
\gr(\bar{\Gamma})^d\rightarrow\gr(\bar{\Gamma})\rightarrow\F_2\rightarrow0.
$$
This yields (f). By a result of Serre (cf. \cite{Laz}, V, 2.1), we
obtain the exact sequence
$$
0\rightarrow \bar{\Gamma}^m\rightarrow
\bar{\Gamma}^d\rightarrow\bar{\Gamma}\rightarrow\F_2\rightarrow0.
$$
By a result of \cite{Br}, section 5, this proves (b) and (c). If
$\mathfrak R=(\rho_1,\ldots,\rho_m)$ is the ideal of the
restricted Lie algebra $\gr(F(X))$ generated by
$\rho_1,\ldots,\rho_m$, we have canonical homomorphisms of
restricted Lie algebras
$$
\gr(F(X))/\mathfrak
R\rightarrow\gr(G)\rightarrow\gr'(G)\rightarrow\gr(\bar{\Gamma}),
$$
where the first arrow is surjective and $\gr'(G)$ is the
restricted Lie algebra associated to the Zassenhaus filtration
$(G_n')$ of $G$ induced by the filtration of $\Gamma$. Since
$\gr(\bar{\Gamma})$ is the enveloping algebra of the restricted
Lie algebra $\gr(F)/\mathfrak R$, the Birkhoff-Witt Theorem for
restricted Lie algebras shows that all arrows are injective which
yields (a) and (d). The injectivity of $\gr(G)\rightarrow\gr'(G)$
yields $G_n=G_n'$ for all $n$ by induction which proves (e). The
proof of (g) follows from (d), (e), (f) and Proposition A3.10 of
\cite{Laz}.
\end{proof}
\medskip

\noindent{\bf Remark.} The formula given in (g) partially answers
a question of Morishita stated in \cite{Mo} in a remark after
Theorem 3.6.

\section{Proof of Theorem~\ref{main1}}

Let $(\chi_i)_{1\le i\le d}$ be a basis of $H^1(G)$ with
$(\chi_i)_{i\in S}$ a basis of $U$ and $(\chi_j)_{j\in S'}$ a
basis of $V$. Let $(\xi_i)$ be the dual basis of $H^1(G)^*=L_1(G)$
and let $g_i$ be any lift of $\xi_i$ to $G$. Let $F$ be the free
pro-$2$-group on $x_1,\ldots,x_d$ and let $f:F\rightarrow G$ be
the homomorphism sending $x_i$ to $g_i$. Then the induced mapping
of $L_1(F)$ into $L_1(G)$ is an isomorphism which we use to
identify these two groups. If $R$ is the kernel of $f$ the
presentation $G=F/R$ is minimal and the transgression map ${\rm
tg} : H^1(R/R^2[R,F])\rightarrow H^2(G)$ is an isomorphism. Hence
$\tg^* :H^2(G)^*\rightarrow R/R^2[R,F]$ is an isomorphism which we
use to identify these two groups. If $\psi$ is the inverse of
$\tg^*$ and $r\in R$ we let $\bar r=\psi(r)$.

The cup product $H^1(G)\otimes H^1(G)\rightarrow H^2(G)$ vanishes
on the subspace $W$ generated by elements of the form $a\otimes
b+b\otimes a$ and so, by duality, induces a homomorphism

$$
H^2(G)^*\rightarrow L_1(F)\otimes L_1(F)=(H^1(G)\otimes H^1(G))^*,
$$
whose image is contained in $W^0$, the annihilator of the subspace
$W$. Since $\dim W=d(d-1)/2$ we have $\dim W^0=d(d+1)/2$. Now
$L_2(F)$ can be identified with the subspace of the tensor algebra
of $L_1(F)$ generated by the elements of the form $\xi^2$ and
$[\xi,\eta]=\xi\eta+\eta\xi$. Since these elements lie in $W^0$
and $\dim L_2(F)=\dim W^0$ we obtain that $W^0=L_2(F)$. If
$$
H^1(G)\otimes' H^1(G)=(H^1(G)\otimes H^1(G))/W
$$
is the symmetric tensor product of $H^1(G)$ with itself we have
$$
H^1(G)\otimes' H^1(G)=U\otimes'U\oplus V\otimes'V\oplus
U\otimes'V,
$$
where $U\otimes'V$ is the image of $U\otimes'V$ in $H^1(G)\otimes'
H^1(G)$. Since the cup-product vanishes on $U\otimes'U$ it induces
a homomorphism
$$
\varphi : V\otimes'V\oplus U\otimes'V=(H^1(G)\otimes'
H^1(G))/U\otimes'U\rightarrow H^2(G)
$$
which is surjective since, by assumption, the cup-product maps
$U\otimes V$ onto $H^2(G)$. Since the annihilator of $U\otimes'U$
is contained in $\mathfrak a_2$, where $\mathfrak a$ is the ideal
of $L(F)$ generated by the $\xi_i$ with $i\in S'$, we get an
injective homomorphism
$$
\varphi^* : H^2(G)^*\rightarrow \mathfrak a_2.
$$

Let $r_1,\ldots,r_m$ generate $R$ as a closed normal subgroup of
$F$.  Since $r_i\in F_2$ we have
$$
r_k\equiv
\prod_{i=1}^dx_i^{2a_{ik}}\prod_{i<j}[x_i,x_j]^{a_{ijk}}\quad {\rm
mod\ } F_3
$$
with $a_{ik}=\bar r_k(\chi_i\cup\chi_i)$ and $a_{ijk}=\bar
r_k(\chi_i\cup\chi_j)$ (cf. \cite{L}, Prop. 3). Moreover, if
$\rho_k$ is the initial form of $r_k$, we have
$$
\varphi^*(\bar
r_k)=\rho_k=\sum_{i=1}^da_{ik}\xi_i^2+\sum_{i<j}a_{ijk}[\xi_i,\xi_j].
$$
By Theorems~\ref{sfred} and \ref{SF}, the elements
$\rho_1,\ldots,\rho_m$ form a strongly free sequence if their
images in $(\mathfrak a/\mathfrak a^*)_2=\mathfrak a_2/\mathfrak
b$, where $\mathfrak b$ is the subspace of $\mathfrak a_2$
generated by the elements $\xi_i^2$, $[\xi_i,\xi_j]$ with $i,j\in
S'$, are linearly independent. If $\mathfrak c$ is the subspace of
$\mathfrak a_2$ generated by the elements $[\xi_i,\xi_j]$ with
$i\in S,j\in S'$ then  $\mathfrak a_2=\mathfrak b \oplus\mathfrak
c$. The images of the $\rho_i$ in $\mathfrak a_2/\mathfrak b$ form
a linearly independent sequence if and only if the projections of
the $\rho_i$ on $\mathfrak c$ form an independent sequence. But
this is equivalent to the composite
$$
H^2(G)^*\rightarrow \mathfrak a_2\rightarrow\mathfrak c
$$
being injective. Now $\mathfrak a_2$ is the dual space of
$$
(H^1(G)\otimes'H^1(G))/U\otimes'U=V\otimes'V\oplus U\otimes'V
$$
and, with respect to this duality, we have $\mathfrak
c=(V\otimes'V)^0$ which implies that the canonical injection
$$
\iota : U\otimes'V\rightarrow V\otimes'V\oplus U\otimes'V
$$
is dual to the projection of $\mathfrak a_2$ onto $\mathfrak c$.
Since $\phi\circ\iota$ is surjective it dual
$\iota^*\circ\varphi^*$ is injective. But the latter is the
composite $H^2(G)^*\rightarrow \mathfrak a_2\rightarrow\mathfrak
c$.

\section{Proof of Theorem~\ref{main2} and Examples}

Without loss of generality, we may assume $S_0=\{q_1,\ldots,q_m\}$
with $m\ge2$, $q_1\equiv1$ mod $4$ and $q_m\equiv3$ mod $4$. Let
$q_1',\ldots,q_m'$ be primes $\equiv1$ mod $4$ which are not in
$S_0$ and such that
\begin{enumerate}[(a)]
\item $q_i'$ is a square mod $q_j'$ for all $i,j$,
\item $q_1'$ is not a square mod $q_m$ and $q_i'$ is not a square mod $q_i$ and $q_{i-1}$ for
$1<i\le m$.
\end{enumerate}

Let $S=\{q_1',q_1,q_2',q_2,\ldots,q_m',q_m,q_{m+1}\}$ where
$q_{m+1}$is a prime $\equiv3$ mod $4$ distinct from
$q_1,\ldots,q_m$ and such that $q_{m+1}$ is not a square mod
$q_1'$ but is a square mod $q_i'$ for all $i\ne1$. Let
$$
(p_1,\ldots,p_{2m+1})=(q_1',q_1,q_2',q_2,\ldots,q_m',q_m,q_{m+1}).
$$
and let $x_1,\ldots,x_{2m+1}$ be generators for the inertia
subgroups of $G_S(2)$ at the primes $p_1,\ldots,p_{2m+1}$
respectively. Then, by \cite{Ko}, Theorem 11.10 and Example~11.12,
the group $G=G_S(2)$ has the presentation $G=F(X)/R=\langle
x_1,\ldots,x_{2m+1}\mid r_1,\ldots,r_{2m+1},r\rangle$, where
\begin{align*}
r_i&\equiv x_i^{2a_i}\prod_{j=1}^{2m+1}[x_i,x_j]^{\ell_{ij}}\ {\rm
mod}\ F_3,\\
r&\equiv\prod_{i=1}^{2m+1}x_i^{a_i}\ {\rm mod}\ F_2
\end{align*}
with $a_i=0$ if and only if $p_i\equiv1$ mod $4$ and $\ell_{ij}=1$
if $p_i$ is not a square mod $p_j$ and $0$ otherwise. Moreover, we
can omit the relator $r_{2m+1}$. By construction we have
$$
r\equiv\prod_{i=2}^{m-1}x_{2i}^{a_{2i}}x_{2m}x_{2m+1}\ {\rm mod}\
F_2
$$
so that $x_{2m+1}\equiv x_{2m}x_4^{a_4}\cdots x_{2m-2}^{a_{2m-2}}$
mod $F_2$. Hence $G=\langle x_1,\ldots,x_{2m}\mid
r_1',\ldots,r_{2m}'\rangle$ where
$$
r_i'\equiv x_i^{2a_i}\prod_{j=1}^{2m}[x_i,x_j]^{\ell_{ij}'}
$$
with $\ell_{ij}'=0$ if $i,j$ are odd and
$$
\ell_{12}'=\ell_{23}'=\ell_{34}'=\cdots=\ell_{2m-1,2m}'=\ell_{2m,1}'=1
$$
but $\ell_{1,2m}'=0$. The image of the initial form of $r_i'$ in
${\tilde L}_{\rm mix}(X)$ (here $\tau_i=1$ for all $i$) is
$$
\rho_i'=\xi_i^{2a_i}+\sum_{j=1}^{2m}\ell_{ij}'[\xi_i,\xi_j].
$$
By Corollary~\ref{sf} the sequence $\rho_1',\ldots,\rho_{2m}'$ is
strongly free in ${\tilde L}_{\rm mix}(X)$ and therefore $G$ is
mild by Theorem~\ref{sfred}.
\medskip

\noindent{\bf Example 1.} To illustrate the above proof, let
$S_0=\{13,3\}=\{q_1,q_2\}$. Then $q_1'=41$, $q_2'=5$, $q_3=19$
satisfy the required conditions. Then
$$
S=\{41,13,5,3,19\}=\{p_1,p_2,p_3,p_4,p_5\}
$$
and the relators for the first presentation are
\begin{align*}
r_1&\equiv[x_1,x_2][x_1,x_4][x_1,x_5]\ {\rm mod}\ F_3,\\
r_2&\equiv [x_2,x_1][x_2,x_3][x_2,x_5]\ {\rm mod}\ F_3,\\
r_3&\equiv [x_3,x_2][x_3,x_4]\ {\rm mod}\ F_3,\\
r_4&\equiv x_4^2[x_4,x_1][x_4,x_3][x_4,x_5]\ {\rm mod}\ F_3,\\
r_5&\equiv x_5^2[x_5,x_1][x_5,x_2]\ {\rm mod}\ F_3,\\
r&=x_4x_5\ {\rm mod}\ F_2.
\end{align*}
Hence $G=G_S(2)$ has the presentation $<x_1,x_2,x_3,x_4\mid
r_1',r_2',r_3',r_4'>$ where
\begin{align*}
r_1'&\equiv[x_1,x_2]\ {\rm mod}\ F_3,\\
r_2'&\equiv[x_2,x_1][x_2,x_3][x_2,x_4]\ {\rm mod}\ F_3,\\
r_3'&\equiv[x_3,x_2][x_3,x_4]\ {\rm mod}\ F_3,\\
r_4'&\equiv x_4^2[x_4,x_1][x_4,x_3]\ {\rm mod}\ F_3.
\end{align*}
\medskip

\noindent{\bf Example 2.} This example is due to Denis Vogel and
while it does not illustrate exactly the above proof it does
contain the basic idea which led to the result. Let
$S=\{5,29,7,11,3\}$. Using the above notation for a Koch
presentation of $G_S(2)$ with $p_1=5,p_2=29,p_3=7,p_4=11,p_5=3$ we
have
\begin{align*}
r_1&\equiv[x_1,x_3][x_1,x_5]\ {\rm mod}\ F_3,\\
r_2&\equiv[x_2,x_4][x_2,x_5]\ {\rm mod}\ F_3,\\
r_3&\equiv x_3^2[x_3,x_1][x_3,x_4]\ {\rm mod}\ F_3,\\
r_4&\equiv x_4^2[x_4,x_2][x_4,x_5]\ {\rm mod}\ F_3,\\
r_5&\equiv x_5^2[x_5,x_1][x_5,x_2]\ {\rm mod}\ F_3,\\
r&\equiv x_3x_4x_5\ {\rm mod}\ F_2.
\end{align*}
Omitting $r_5$ and setting $x_5=x_3x_4$ mod $F_2$, we get
\begin{align*}
r_1'&\equiv[x_1,x_4]\ {\rm mod}\ F_3,\\
r_2'&\equiv[x_2,x_3]\ {\rm mod}\ F_3,\\
r_3'&\equiv x_3^2[x_3,x_1][x_3,x_4]\ {\rm mod}\ F_3,\\
r_4'&\equiv x_4^2[x_4,x_2][x_4,x_3]\ {\rm mod}\ F_3.
\end{align*}
The images of the initial forms of these relators in ${\tilde
L}_{\rm mix}(X)$ (all $\tau_i=1$) are
\begin{align*}
\rho_1'&=[\xi_1,\xi_4],\\
\rho_2'&=[\xi_2,\xi_3],\\
\rho_3'&=\xi_3^2+[\xi_3,\xi_1]+[\xi_3,\xi_4],\\
\rho_4'&=\xi_4^2+[\xi_4,\xi_2]+[\xi_4,\xi_3].
\end{align*}
If $\mathfrak a$ is the ideal of ${\tilde L}_{\rm mix}(X)$
generated by $\xi_3,\xi_4$ the $\rho_i'$ are in $\mathfrak a$ and
their images in $\mathfrak a/\mathfrak a^*$ are the classes of
$$
[\xi_1,\xi_4],\ [\xi_2,\xi_3],\ [\xi_1,\xi_3],\ [\xi_2,\xi_4]
$$
which are part of a basis for $(\mathfrak a/\mathfrak a^*)_2$.
Hence $G_S(2)$ is mild. If $a_n=\dim L(G_S)$ then $a_1=4$ and
$$
a_n=\sum_{k=2}^n(\frac{1}{k}\sum_{\ell|k}\mu(\frac{k}{\ell})(2^{\ell+1}+(-1)^\ell4)
$$ for $n\ge2$ by Theorem~\ref{Cent}.

\end{document}